\renewcommand \a{\alpha}
\renewcommand \b{\beta}
\newcommand \K{\delta}
\newcommand \n{\nabla}
\newcommand \la{\lambda}
\newcommand \ve{\varepsilon}
\newcommand \id{\mathrm{id}}
\newcommand \br{\mathbb{R}}
\newcommand \bc{\mathbb{C}}
\newcommand \Oc{\mathbb{O}}
\newcommand \Rn{\mathbb R^n}
\newcommand \Ro{\mathbb R^8}
\newcommand \Rs{\mathbb R^{16}}
\newcommand \rk{\mathrm{rk}}
\newcommand \Ker{\mathrm{Ker}}
\newcommand \End{\mathrm{End}}
\newcommand \Cliff{\mathrm{Cliff}}
\newcommand \Cl{\mathrm{Cl}}
\newcommand \Span{\mathrm{Span}}
\newcommand \Tr{\mathrm{Tr}}
\newcommand \cJ{\mathcal{J}}
\newcommand \cI{\mathcal{I}}
\newcommand \cp{\mathcal{C}}
\newcommand \og{\mathfrak{o}}
\renewcommand \O{\mathrm{O}}
\newcommand \OC{\Oc \otimes \bc}
\newcommand \<{\langle}
\renewcommand \>{\rangle}
\newcommand \JC{\mathcal{J}_{\bc}}
\newcommand \mU{\mathcal{U}}
\theoremstyle{plane}
\newtheorem{theorem}{Theorem}
\newtheorem*{theorem*}{Theorem}
\newtheorem*{corollary*}{Corollary}
\newtheorem{lemma}{Lemma}
\theoremstyle{definition}
\newtheorem{definition}{Definition}
\theoremstyle{remark}
\newtheorem{remark}{Remark}
\begin{document}

\title{Conformally Osserman manifolds}

\author{Y.Nikolayevsky}
\address{Department of Mathematics and Statistics, La Trobe University, Victoria, 3086, Australia}
\email{y.nikolayevsky@latrobe.edu.au}

\date{\today}

\dedicatory{%This article is d
Dedicated to the memory of Novica Bla\v zi\'c (1959 -- 2005), a remarkable mathematician 
and a wonderful person.}
\thanks{Supported by the FSTE grant} 

\subjclass[2000]{Primary: 53B20} %secondary 53C25? 53C26?
\keywords{Osserman manifold, Weyl tensor, Jacobi operator, Clifford structure}

\begin{abstract}
An algebraic curvature tensor is called Osserman if the eigenvalues of the associated Jacobi operator are constant on
the unit sphere. A Riemannian manifold is called conformally Osserman if its Weyl conformal curvature tensor at every
point is Osserman. We prove that a conformally Osserman manifold of dimension $n \ne 3, 4, 16$ is locally conformally
equivalent either to a Euclidean space or to a rank-one symmetric space.
\end{abstract}

\maketitle

\section{Introduction}
\label{s:intro}

An \emph{algebraic curvature tensor} $\mathcal{R}$ on a Euclidean space $\Rn$ is a $(3, 1)$ tensor having the same
symmetries as the curvature tensor of a Riemannian manifold. For $X \in \Rn$, the \emph{Jacobi operator}
$\mathcal{R}_X : \Rn \to \Rn$ is defined by $\mathcal{R}_XY = \mathcal{R}(X, Y)X$ . The Jacobi operator is symmetric
and $\mathcal{R}_XX = 0$ for all $X \in \Rn$.

\begin{definition} \label{d:oact}
An algebraic curvature tensor $\mathcal{R}$ is called \emph{Osserman} if the eigenvalues of the
Jacobi operator $\mathcal{R}_X$ do not depend on the choice of a unit vector $X \in \Rn$.
\end{definition}

One of the algebraic curvature tensors naturally associated to a Riemannian manifold (apart from the curvature tensor
itself) is the Weyl conformal curvature tensor.

\begin{definition} \label{d:com}
A Riemannian manifold is called \emph{(pointwise) Osserman} if its curvature tensor at every
point is Osserman. A Riemannian manifold is called \emph{conformally Osserman} if its Weyl tensor at every
point is Osserman.
\end{definition}

It is well-known (and is easy to check directly) that a Riemannian space locally isometric to a Euclidean space or to
a rank-one symmetric space is Osserman. The question of whether the converse is true (``every pointwise Osserman
manifold is flat or locally rank-one symmetric") is known as the \emph{Osserman Conjecture} \cite{O}. The first result
on the Osserman Conjecture (the affirmative answer for manifolds of dimension not divisible by $4$) was published
before the conjecture itself \cite{Chi}. In the following almost two decades, the research in the area of Osserman and
related classes of manifolds, both in the Riemannian and pseudo-Riemannian settings, was flourishing, with dozens of
papers and at least three monographs having been published \cite{G1,G2,GKV}.

At present, the Osserman Conjecture is proved almost completely, with the only exception when the
dimension of an Osserman manifold is $16$ and one of the eigenvalues of the Jacobi operator has multiplicity $7$ or
$8$ \cite{Nhjm,Nmm,Nma,Nbel}.
The main difficulty lies in the fact that the Cayley projective plane (and its hyperbolic dual)
are Osserman, with the multiplicities of the eigenvalues of the Jacobi operator being exactly $7$ and $8$; moreover,
the curvature tensor of the Cayley projective plane is \emph{essentially} different from that
of the other rank-one symmetric spaces, as it does not admit a Clifford structure (see Section~\ref{s:clifford}
for details). This is the only known Osserman curvature tensor without a Clifford structure, and to prove the
Osserman Conjecture in full it would be largely sufficient to show that there are no other exceptions.

The study of conformally Osserman manifolds was started in \cite{BG1}, and then continued in 
\cite{BG2,BGNSi,G2,BGNSt}.
Every Osserman manifold is conformally Osserman (which easily follows from the formula for the Weyl tensor and the
fact that every Osserman manifold is Einstein), as also is every manifold locally conformally equivalent to
an Osserman manifold.

Our main results is the following theorem.
\begin{theorem} \label{t:co}
A connected $C^{\infty}$ Riemannian conformally Osserman manifold of dimension $n \ne 3,4,16$ is locally conformally
equivalent to a Euclidean space or to a rank-one symmetric space.
\end{theorem}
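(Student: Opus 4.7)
\emph{Algebraic step.} My plan is to reduce the problem in two stages: a pointwise algebraic classification of the Weyl tensor, then a differential argument globalising the pointwise data. At each point $p$, $W_p$ is an Osserman algebraic curvature tensor on $T_pM$. By the classification of such tensors established in the author's earlier work \cite{Nhjm,Nmm,Nma,Nbel}, summarised via the Clifford structures discussed in Section~\ref{s:clifford}, for $n \ne 16$ every Osserman algebraic curvature tensor admits a Clifford structure, i.e., is built from anticommuting skew-symmetric almost complex structures $J_1, \ldots, J_m$ together with real constants $\la_0, \la_1, \ldots, \la_m$. Applying this pointwise to $W$ on the open set $\mU = \{p : W_p \ne 0\}$ yields, at each $p \in \mU$, a Clifford structure $(J_1(p), \ldots, J_m(p))$ encoding the Weyl tensor.

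\emph{Differential step.} On $\mU$ one must globalise the pointwise data. I would first show the Clifford generators $J_i$ can be chosen smoothly on sufficiently small neighbourhoods, and then exploit the second Bianchi identity for $W$ (which expresses the divergence of $W$ in terms of the Cotton--York tensor of $g$) to derive differential identities for $\n J_i$. Combining these with the algebraic form of $W$ should force the existence of a local conformal factor $\phi$ such that $\tilde g = e^{2\phi} g$ is Einstein. Since being conformally Osserman is a conformally invariant property, $\tilde g$ is Einstein and conformally Osserman, and the Einstein condition then implies that $\tilde g$ is itself Osserman. Applying the (almost complete) Osserman Conjecture for $n \ne 16$ identifies $\tilde g$ locally with a Euclidean space or a rank-one symmetric space. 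On the open complement of $\mU$, where $W$ vanishes on an open set, the Weyl--Schouten theorem directly gives local conformal flatness.

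\emph{Main obstacle.} The principal difficulty lies in the differential globalisation on $\mU$: the pointwise choice of $(J_1, \ldots, J_m)$ is only unique up to a gauge (an $\O(m)$-action together with the freedom in labelling Clifford generators), so producing smoothly varying representatives requires care, and the Bianchi-derived equations for $\phi$ must be shown to be integrable. A secondary issue is the interface between $\mU$ and its complement, which should be handled by a unique-continuation or real-analyticity argument once sufficient regularity of the curvature is established. The dimension restrictions are essential: $n=3$ is trivial since $W\equiv 0$; $n=4$ fails because of the self-dual/anti-self-dual splitting of $W$, which interferes with the Clifford-structure analysis; and $n=16$ is the genuine exception, since the Cayley projective plane provides an Osserman tensor admitting no Clifford structure, so the algebraic step itself breaks down.
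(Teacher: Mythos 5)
Your high-level decomposition (algebraic classification via Clifford structures, then a differential globalisation via the second Bianchi identity) matches the paper's architecture, but there are two substantial gaps in the middle and at the end where your plan would not go through as stated.

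First, you claim that the Bianchi-derived identities for $\n J_i$ ``should force the existence of a local conformal factor $\phi$ such that $\tilde g = e^{2\phi}g$ is Einstein.'' This is the crux of the whole proof, and it is not at all a formal consequence of the divergence of $W$ being the Cotton--York tensor. The paper has to work much harder: after writing $R$ in the form \eqref{eq:confcs} with a symmetric operator $\rho$ absorbing the Ricci part, it derives a long list of identities for $\n\rho$, $\n J_i$, $\n\eta_i$ (Lemmas~\ref{l:nablarho} and \ref{l:nablaJ8}, which occupy several pages and split into several cases, including an octonion computation for $n=8,\nu=7$). These identities show that after a specific conformal rescaling the operator $\rho$ becomes a \emph{Codazzi tensor}, $(\n_X\rho)Y = (\n_Y\rho)X$. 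Only then, using the Derdzinski--Shen theorem that the eigenspace splitting of a Codazzi tensor is curvature-invariant, does the paper prove in Lemma~\ref{l:codazzi} that $\rho$ is a multiple of the identity, which is what makes the rescaled metric Osserman. Your sketch entirely omits the Codazzi mechanism and the Derdzinski--Shen step, and there is no shortcut here: achieving an Einstein (equivalently Osserman) metric by conformal change is exactly the statement that needs proof, not something the Bianchi identity hands you.

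Second, your handling of the interface between $\mU$ and $\{W=0\}$ is not viable as proposed. You suggest ``unique-continuation or real-analyticity,'' but the hypotheses are only $C^\infty$, so real-analyticity is unavailable, and there is no general unique continuation principle for the Weyl tensor of a Riemannian metric. Moreover the set $\{W=0\}$ need not be open, so the Weyl--Schouten theorem alone does not dispose of its interior cleanly without further argument about the boundary. The paper's actual argument is different and more delicate: it shows $M = M_0 \cup \bigcup_\a M_\a$ with $M_0$ closed and each $M_\a$ open, locally conformal to a fixed rank-one symmetric model, and that on any $M_\a$ the conformal factor $f$ (normalized via $\|W\|^2 = C_{\nu n} f^2$) and $\n f$ extend continuously to zero at boundary points $x_0 \in M_0$ (Lemma~\ref{l:x_0}). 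Setting $u = f^{(n-2)/4}$, which satisfies the elliptic equation $\triangle u = Fu$ on a geodesic ball touching $M_0$, the boundary point theorem then forces a strictly positive inner normal derivative at $x_0$, contradicting $\n u \to 0$. This maximum-principle argument is what rules out the coexistence of a nonflat region with a flat one; a unique continuation appeal is not a substitute. (There is an additional subtlety for $n=6$ requiring second covariant derivatives of $W$.)

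Your treatment of $n=3,4,16$ is correct, and the overall scaffolding is right, but the two steps above are where the real mathematical content resides, and in both cases your proposed mechanism either does not exist or is replaced in the paper by a substantially different argument.
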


% odnomu i tomu zhe! i tak yasno? (pairwise noncomformal)

Theorem~\ref{t:cocl} answers, with three exceptions, the conjecture made in \cite{BGNSi} (for conformally Osserman
manifolds of dimension $n > 6$ not divisible by $4$, this conjecture is proved in \cite[Theorem~1.4]{BG1}).

Note that the nature of the three excepted dimensions in Theorem~\ref{t:cocl} is different. In dimension three the
Weyl tensor
gives no information on a manifold at all. In dimension four, even a ``genuine" pointwise Osserman manifold does
not have to be locally symmetric (see \cite[Corollary 2.7]{GSV}, \cite{Ol},
for the examples of ``generalized complex space forms"). As it is proved in \cite{Chi}, the Osserman Conjecture is
still true in dimension four, but in a more restrictive version: one requires the eigenvalues of the Jacobi operator
to be constant on the whole unit tangent bundle (a Riemannian manifold with this property is called
\emph{globally Osserman}). One might wonder, whether the conformal counterpart of this result is true. The following
elegant characterization in dimension four is obtained in \cite{BG2}: a four-dimensional Riemannian manifold is
conformally Osserman if and only if it is either self-dual or anti-self-dual.

In dimension $16$, both the conformal and the original Osserman Conjecture remain open (for partial
results, see \cite{Nma,Nbel} in the Riemannian case and Theorem~\ref{t:cocl} in Section~\ref{s:conf} in the
conformal case).

As a rather particular case of Theorem~\ref{t:co}, we obtain the following analogue of the Weyl-Schouten Theorem for
rank-one symmetric spaces: a Riemannian manifold of dimension greater than four having ``the same" Weyl tensor as
that of one of the complex/quaternionic projective spaces or their noncompact duals is locally conformally equivalent
to that space. More precisely:

\begin{theorem}\label{t:corollary}
Let $M_0^n$ denote one of the spaces $\bc P^{n/2}, \; \bc H^{n/2}, \; \mathbb{H}P^{n/4}, \; \mathbb{H}H^{n/4}$, and
let $W_0$ be the Weyl tensor of $M_0^n$ at some point $x_0 \in M_0^n$. Suppose that for every point $x$ of a
Riemannian manifold $M^n, \; n > 4$, there exists a linear isometry $\iota: T_xM^n \to T_{x_0}M_0^n$ which maps the
Weyl tensor of $M^n$ at $x$ on a positive multiple of $W_0$. Then $M^n$ is locally conformally equivalent to $M_0^n$.
\end{theorem}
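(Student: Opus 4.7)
The plan is to reduce Theorem~\ref{t:corollary} to Theorem~\ref{t:co} by first showing that the hypothesis forces $M^n$ to be conformally Osserman, and then to pin down the conformal class using the algebraic structure of $W_0$.

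First, I would observe that every rank-one symmetric space is Osserman, so $W_0$ is an Osserman algebraic curvature tensor at $x_0$. Because the Osserman property of an algebraic curvature tensor is preserved under pullback by a linear isometry and under multiplication by a nonzero scalar (the Jacobi operator simply rescales), the hypothesis implies that the Weyl tensor $W(x)$ of $M^n$ is Osserman at every point $x$. Thus $M^n$ is conformally Osserman in the sense of Definition~\ref{d:com}.

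Second, I would invoke Theorem~\ref{t:co}. The dimension restrictions $n \ne 3, 4$ are satisfied by assumption; the remaining excluded dimension $n = 16$ is handled by the more refined Theorem~\ref{t:cocl} of Section~\ref{s:conf}, once one notes that for each of the listed $M_0^n$ the Jacobi operator of $W_0$ has eigenvalue multiplicities $\{1, n-2\}$ (for $\bc P^{n/2}$ and $\bc H^{n/2}$) or $\{3, n-4\}$ (for $\mathbb{H}P^{n/4}$ and $\mathbb{H}H^{n/4}$), neither of which coincides with the exceptional multiplicities $7$ or $8$ attached to the Cayley plane. Consequently, $M^n$ is locally conformally equivalent to a Euclidean space or to a rank-one symmetric space.

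Third, I would identify the target. A Euclidean space and the real space forms have vanishing Weyl tensor, so they are ruled out since $W_0 \ne 0$. Among the remaining rank-one symmetric spaces, the isomorphism type of $M_0^n$ as a Riemannian manifold is determined by the algebraic invariants of its Weyl tensor: the multiplicity patterns $\{1, n-2\}$, $\{3, n-4\}$ and (in dimension $16$) $\{7, 8\}$ separate the complex, quaternionic and octonionic families, while within each dual pair the two Weyl tensors differ by an overall sign. Since a conformal change scales the Weyl tensor by a positive function, the requirement that $W(x)$ be a \emph{positive} multiple of $W_0$ (transported by a linear isometry) pins down $M_0^n$ uniquely inside its dual pair. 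Hence $M^n$ is locally conformally equivalent to $M_0^n$.

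The main obstacle I anticipate is the bookkeeping in dimension $n = 16$: one must check that the exceptional Cayley-type multiplicities do not intrude for any choice of $M_0^n$ in the list, and that a partial dimension-$16$ result (such as Theorem~\ref{t:cocl}) suffices under the explicit multiplicity conditions above. Once that algebraic verification is in place, the remainder of the argument is essentially a direct application of Theorem~\ref{t:co} combined with the sign convention of the Weyl tensor on a dual pair.
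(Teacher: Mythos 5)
Your proposal is correct and follows essentially the same route the paper intends: observe the hypothesis makes $M^n$ conformally Osserman, reduce to Theorem~\ref{t:co} (and to Theorem~\ref{t:cocl} for $n=16$, noting that $W_0$ has a $\Cliff(1)$ or $\Cliff(3)$ structure so that $\nu \le 4$), and then identify the model space among the rank-one symmetric spaces by the multiplicity pattern and the sign of the eigenvalues of the Jacobi operator, which the positive-multiple condition fixes within each dual pair. The paper states this only in compressed form (``a rather particular case of Theorem~\ref{t:co}'', plus the remark about $n=16$), so your write-up simply makes explicit what the paper leaves to the reader.
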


% in general not true, that weyl proportional implies conformal, as
% there are other conformal invariants (ref on invariant; ref on that statement? examples eg?)

For $M_0^n= \bc P^{n/2}, \; \bc H^{n/2}$ and $n >6$, the claim follows from \cite[Theorem~1.4]{BG1}. The fact that
the dimension $n=16$ is not excluded (compared to Theorem~\ref{t:co}) follows from Theorem~\ref{t:cocl} (see
Section~\ref{s:conf}).

We explicitly require all the object (manifolds, metrics, vector and tensor fields) to be smooth (of class $C^\infty$),
although all the results remain valid for class $C^k$, with sufficiently large $k$.

\smallskip

The paper is organized as follows. In Section~\ref{s:clifford}, we give a background on Osserman algebraic
curvature tensors and on Clifford structures and prove some technical Lemmas. 
%to be used later in Section~\ref{s:conf}.
The proof of Theorem~\ref{t:co} is given in Section~\ref{s:conf}. Theorem~\ref{t:co} is deduced from a more general
Theorem~\ref{t:cocl}. We first prove the local version using the differential Bianchi identity, and then the global
version by showing that the ``algebraic type" of the Weyl tensor is the same at all the points of a connected
conformally Osserman Riemannian manifold (in particular, a nonzero Osserman Weyl tensor cannot degenerate to zero).

\section{Algebraic curvature tensors with a Clifford structure}
\label{s:clifford}

\subsection{Clifford structure}
\label{ss:clifford}

The property of an algebraic curvature tensor $\mathcal{R}$ to be Osserman is quite algebraically restrictive. In the
most cases, such a tensor can be obtained by the following remarkable construction, suggested in \cite{GSV}, which
generalizes the curvature tensors of the complex and the quaternionic projective spaces.

\begin{definition}\label{d:cl}
A \emph{Clifford structure} $\Cliff(\nu ; J_1, \dots, J_\nu; \la_0, \eta_1, \dots , \eta_\nu)$ on a
Euclidean space $\Rn$ is a set of $\nu \ge 0$ anticommuting almost Hermitian structures $J_i$ and $\nu+1$ real numbers
$\la_0, \eta_1, \dots \eta_\nu$, with $\eta_i \ne 0$. An algebraic curvature tensor $\mathcal{R}$ on $\Rn$ \emph{has a
Clifford structure} $\Cliff(\nu ; J_1, \dots, J_\nu; \la_0, \eta_1, \dots , \eta_\nu)$ if
\begin{equation}\label{eq:cs}
\mathcal{R}(X, Y) Z = \la_0 (\< X, Z \> Y - \< Y, Z \> X) \\
+ \sum\nolimits_{i=1}^\nu \eta_i (2 \< J_iX, Y \> J_iZ + \< J_iZ, Y \> J_iX - \< J_iZ, X \> J_iY).
\end{equation}
When it does not create ambiguity, we abbreviate
$\Cliff(\nu ; J_1, \dots, J_\nu; \la_0, \eta_1, \dots , \eta_\nu)$ to just $\Cliff(\nu)$.
\end{definition}

\begin{remark}\label{rem:climpliesos}
As it follows from Definition~\ref{d:cl}, the operators $J_i$ are skew-symmetric, orthogonal and satisfy the equations
$\<J_iX, J_jX\> = \K_{ij}\|X\|^2$ and $J_iJ_j+J_jJ_i =-2 \delta_{ij} \id$, for all $i,j =1, \dots, \nu$, and all
$X \in \Rn$. This implies that every algebraic curvature tensor with a Clifford structure is Osserman, as
by \eqref{eq:cs} the Jacobi operator has the form
$\mathcal{R}_XY =  \la_0 (\| X\|^2 Y - \< Y, X \> X) + \sum\nolimits_{i=1}^\nu 3 \eta_i \< J_iX, Y \> J_iX$, so
for a unit vector $X$, the eigenvalues of $\mathcal{R}_X$ are $\la_0$ (of multiplicity $n-1-\nu$ provided $\nu<n-1$),
$0$ and $\la_0+3\eta_i, \; i=1, \dots, \nu$.
\end{remark}

The converse (``every Osserman algebraic curvature tensor has a Clifford structure") is true in all the dimensions
except for $n=16$, and also in many cases when $n=16$, as follows from \cite{Nma} (Proposition~1 and the second last
paragraph of the proof of Theorem~1 and Theorem~2), \cite[Proposition~1]{Nmm} and \cite[Proposition~2.1]{Nbel}.
The only known counterexample is the curvature tensor $R^{\O P^2}$ of the Cayley projective plane (more precisely,
any algebraic curvature tensor of the form
$\mathcal{R}= a R^{\O P^2}+b R^1$, where $R^1$ is the curvature tensor of the unit sphere $S^{16}(1)$ and $a \ne 0$).

A Clifford structure $\Cliff(\nu)$ on the Euclidean space $\Rn$ turns it the into a Clifford module (we refer to
\cite[Part~1]{ABS}, \cite[Chapter~11]{H}, \cite[Chapter~1]{LM} for standard facts on Clifford algebras and
Clifford modules). Denote $\Cl(\nu)$ a \emph{Clifford algebra} on $\nu$ generators  $x_1, \dots, x_\nu$, an
associative unital algebra over $\br$ defined by the relations $x_ix_j+x_jx_i=-2 \K_{ij}$ (this condition determines
$\Cl(\nu)$ uniquely). The map $\sigma: \Cl(\nu) \to \Rn$ defined on generators by $\sigma(x_i) = J_i$ (and
$\sigma(1) = \id$) is a representation of $\Cl(\nu)$ on $\Rn$. As all the $J_i$'s are orthogonal and skew-symmetric,
$\sigma$ gives rise to an \emph{orthogonal multiplication} defined as follows. In the Euclidean space $\br^{\nu}$,
fix an orthonormal basis $e_1, \dots, e_\nu$. For every $u = \sum_{i=1}^{\nu} u_i e_i \in \br^{\nu}$ and every
$X \in \Rn$, define
\begin{equation}\label{eq:ortmult}
    J_uX = \sum\nolimits_{i=1}^{\nu} u_i J_iX
\end{equation}
(when $u=e_i$, we abbreviate $J_{e_i}$ to $J_i$). The map $J: \br^{\nu} \times \Rn \to \Rn$ defined by
\eqref{eq:ortmult} is an orthogonal multiplication: $\|J_uX\|^2=\|u\|^2\|X\|^2$ (similarly, we can define
an orthogonal multiplication $J: \br^{\nu+1} \times \Rn \to \Rn$ by
$J_uX = u_0 X + \sum\nolimits_{i=1}^{\nu} u_i J_iX$, for $u = \sum_{i=0}^{\nu} u_i e_i \in \br^{\nu+1}$, where
$e_0, e_1, \dots, e_\nu$ is an orthonormal basis for the Euclidean space $\br^{\nu+1}$).
For $X \in \Rn$, denote
\begin{equation*}
    \cJ X = \Span(J_1X, \dots, J_\nu X), \qquad \cI X = \Span(X, J_1X, \dots, J_\nu X).
\end{equation*}
Later we will also use the complexified versions of these subspaces which we denote
$\JC X$ and ${\mathcal{I}_{\bc}} X$ respectively, for $X \in \bc^n$.

If $\Rn$ is a $\Cl(\nu)$-module (equivalently, if there exists an algebraic curvature tensor with a
Clifford structure $\Cliff(\nu)$ on $\Rn$), then (see, for instance, \cite[Theorem~11.8.2]{H})
\begin{equation}\label{eq:radon}
\nu \le 2^b + 8 a-1, \qquad \text{where $n= 2^{4a+b} c, \; c$ is odd, $0 \le b \le 3$}.
\end{equation}
From \eqref{eq:radon}, we have the following inequalities.
\begin{lemma} \label{l:nvsnu}
Let $\mathcal{R}$ be an algebraic curvature tensor with a Clifford structure $\Cliff(\nu)$ on
$\Rn$. Suppose $n \ne 2,4,8,16$. Then
\begin{enumerate}[\rm (i)]
  \item $n \ge 3 \nu+3$, with the equality only when $n=6, \; \nu=1$, or $n = 12, \; \nu=3$, or $n=24, \; \nu=7$.
  \item $n > 4\nu-2$, except in the following cases: $n=24, \; \nu=7$ and $n=32, \; \nu=8$.
  \item there exists an integer $l$ such that $\nu < 2^l < n$.
\end{enumerate}
\end{lemma}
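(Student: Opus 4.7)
All three claims will follow by substituting the Radon--Hurwitz bound \eqref{eq:radon} into each inequality and performing a short case analysis in $(a,b,c)$. The plan is to write $\mu = 2^b + 8a$ so that $\nu + 1 \le \mu$ and $n = 2^{4a+b} c$, and to observe that the hypothesis $n \ne 2,4,8,16$ excludes exactly the quadruples with $c=1$ and $(a,b) \in \{(0,1),(0,2),(0,3),(1,0)\}$. Thus the remaining parameters satisfy either $c \ge 3$, or $c = 1$ with $4a+b \ge 5$.

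For (i), it suffices to show $2^{4a+b} c \ge 3\mu$. When $a = 0$ this reduces to $c \ge 3$, which holds automatically outside the excluded dimensions; equality then forces $c = 3$ and $\nu = 2^b - 1$, producing the listed cases $(n,\nu) = (6,1), (12,3), (24,7)$. For $a = 1$ only finitely many $(b,c)$ need inspection, and each yields strict inequality except for the excluded $n = 16$. For $a \ge 2$ the bound $2^{4a+b} \ge 3(2^b + 8a)$ is easy from exponential growth.

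Part (ii) is handled in the same spirit: rewrite $n > 4\nu - 2$ as $2^{4a+b} c > 4\mu - 6$ and scan the same finite list of $(a,b,c)$. The two listed exceptions will emerge as the only parameter values where the Radon--Hurwitz maximum saturates the inequality in the wrong direction, once one restricts to non-excluded dimensions.

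For (iii) I produce $l$ explicitly. If $c \ge 3$, take $l = 4a+b$: then $2^l \le n/3 < n$, while $2^l > \nu$ follows from $2^{4a+b} \ge 2^b + 8a$, which reduces to $2^b(2^{4a} - 1) \ge 8a$ (immediate for $a = 0$, and for $a \ge 1$ via $2^{4a} - 1 \ge 15 \ge 8a$ at $a=1$, with exponential growth thereafter). If instead $c = 1$, so $4a+b \ge 5$, take $l = 4a+b - 1$; the upper bound is $2^l = n/2 < n$, and the lower bound $2^l > \nu$ reduces to $2^b(2^{4a-1} - 1) \ge 8a$, a brief check for the remaining $(a,b)$. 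The only real obstacle throughout is careful bookkeeping --- making sure that each boundary case of the arithmetic inequalities lines up with an exception admitted by the statement --- so the proof is essentially a finite numerical verification organized along the Radon--Hurwitz parameters.
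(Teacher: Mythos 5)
Your method---parametrizing $n=2^{4a+b}c$ as in \eqref{eq:radon}, writing $\mu=2^b+8a$ so that $\nu\le\mu-1$, and then doing a finite scan over $(a,b,c)$---is exactly what the paper intends; the paper offers no written proof of Lemma~\ref{l:nvsnu}, only the remark that the inequalities ``follow from \eqref{eq:radon}.'' Your verifications for parts (i) and (iii) go through as you describe: for (i), $n\ge 3\mu\ge 3(\nu+1)$ reduces when $a=0$ to $c\ge 3$ with equality exactly at $c=3$, $\nu=2^b-1$, and the $a\ge 1$ cases give strict inequality once the excluded $n=16$ is removed; for (iii), the choices $l=4a+b$ (when $c\ge3$) and $l=4a+b-1$ (when $c=1$, $4a+b\ge 5$) both satisfy $\nu<\mu\le 2^l$ and $2^l<n$ by the elementary estimates you give. (One harmless omission, present already in the paper, is that for $a=b=0$, $c=3$ the equality $n=3\nu+3$ also holds with $(n,\nu)=(3,0)$; this case only arises for $\nu=0$ and never matters in the applications.)

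The gap is in (ii): you assert that ``the two listed exceptions will emerge,'' but if you actually carry out the scan you obtain a different list. For $n=32$ the unique decomposition is $a=1$, $b=1$, $c=1$, so $\mu=10$ and $\nu$ can be as large as $9$. With $\nu=8$ one has $4\nu-2=30<32$, so $(n,\nu)=(32,8)$ does \emph{not} violate the inequality $n>4\nu-2$ and is not an exception; with $\nu=9$ one has $4\nu-2=34>32$, so $(32,9)$ \emph{is} the exception. The correct exceptional list under the hypotheses $n\ne 2,4,8,16$ is therefore $(n,\nu)=(24,7)$ and $(n,\nu)=(32,9)$, not $(32,8)$ as the lemma states. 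This appears to be a misprint in the paper (the only place (ii) is invoked, in the proof of Lemma~\ref{l:level}(2), actually needs only the much weaker inequality $n>2\nu-2$, which holds for both $(32,8)$ and $(32,9)$, so nothing downstream breaks). But your write-up presents the claimed exceptions as the output of a computation you did not exhibit, and that computation in fact contradicts the statement; you should either display the scan and correct the exceptional pair, or at minimum flag the discrepancy rather than asserting the listed pairs emerge.
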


\subsection{Clifford structures on $\Ro$ and the octonions}
\label{ss:cliffoct}

The proof of Theorem~\ref{t:co} in the ``generic case" will rely upon the fact that $\nu$ is small relative to $n$
(with the required estimates given in Lemma~\ref{l:nvsnu}). However, in the case $n=8$, the number $\nu$ can be as
large as $7$, according to \eqref{eq:radon}. Consider this case in more detail.
As it is shown in \cite{Nmm}, not only every Osserman algebraic curvature tensor $\mathcal{R}$ on $\Ro$ has a Clifford
structure, but also that Clifford can be taken of one of the two (mutually exclusive) forms: either $\mathcal{R}$ has
a $\Cliff(3)$-structure, with $J_1J_2=J_3$, or an existing $\Cliff(\nu)$-structure can be ``complemented" to a
$\Cliff(7)$-structure. More precisely:

\begin{lemma}\label{l:formofR}
{\ }
\begin{enumerate}[1.]
  \item
  Suppose $\mathcal{R}$ is an algebraic curvature tensor on $\Ro$ having a Clifford structure
  $\Cliff(\nu ; J_1, \dots, J_\nu;$ $\la_0, \eta_1, \dots , \eta_\nu)$. Then exactly one of the following two
  possibilities may occur: either $\mathcal{R}$ has a Clifford structure $\Cliff(3)$ with $J_1J_2 = J_3$,
  or there exist $7-\nu$ operators $J_{\nu+1}, \dots , J_7$ such that $J_1, \dots, J_7$
  are anticommuting almost Hermitian structures with $J_1J_2\ldots J_7 = \id_{\Ro}$ and $\mathcal{R}$ has a Clifford
  structure $\Cliff(7 ; J_1, \dots, J_7; \la_0-3\xi, \eta_1+\xi, \dots , \eta_\nu+\xi, \xi, \dots, \xi)$, for any
  $\xi \ne -\eta_i, 0$.
  \item
  Let $\Oc$ be the octonion algebra with the inner product defined by $\|u\|^2=uu^*$, where ${}^*$ is the octonion
  conjugation, and let $\Oc'=1^\perp$, the space of imaginary octonions. Then, in the second case in assertion 1,
  there exist linear isometries
  $\iota_1: \Ro \to \Oc, \; \iota_2: \br^7 \to \Oc'$ such that the orthogonal multiplication \eqref{eq:ortmult} is
  given by $J_uX = \iota_1(X) \iota_2(u)$.
\end{enumerate}
\end{lemma}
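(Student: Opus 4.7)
My approach splits Lemma~\ref{l:formofR} into its two assertions, resting on one algebraic identity and on the uniqueness of irreducible $\Cl(7)$-representations.

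The key identity for Part~1 is that on $\Ro$, for any seven pairwise anticommuting orthogonal almost Hermitian structures $J_1,\dots,J_7$ with $J_1\cdots J_7 = \id_{\Ro}$,
\begin{equation*}
\sum_{i=1}^7 \bigl(2\<J_iX,Y\>J_iZ + \<J_iZ,Y\>J_iX - \<J_iZ,X\>J_iY\bigr) = 3\bigl(\<X,Z\>Y - \<Y,Z\>X\bigr).
\end{equation*}
I would prove this by comparing Jacobi operators. For unit $X\in\Ro$, the vectors $X,J_1X,\dots,J_7X$ form an orthonormal basis of $\Ro$ (by the properties of Clifford systems noted in Remark~\ref{rem:climpliesos}), so the Jacobi operator of the left-hand side sends $Y$ to $3\sum_i\<J_iX,Y\>J_iX = 3(Y-\<X,Y\>X)$, matching the Jacobi operator of the right-hand side. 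An algebraic curvature tensor being determined by its Jacobi operators (via polarization and the first Bianchi identity), the identity follows.

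With this, I handle the dichotomy in Part~1. By \eqref{eq:radon} we have $\nu\leq 7$; for $\nu<7$, standard Radon--Hurwitz theory lets me extend the given anticommuting system $J_1,\dots,J_\nu$ to a full one $J_1,\dots,J_7$. In any irreducible $\Cl(7)$-module the central volume element $e_1\cdots e_7$ acts as $\pm\id$, so by flipping the sign of $J_7$ if needed I arrange $J_1\cdots J_7 = \id_{\Ro}$. Adding the zero expression $-3\xi(\<X,Z\>Y-\<Y,Z\>X)+\xi\sum_{i=1}^7(\ldots)$ coming from the identity to \eqref{eq:cs} produces precisely the $\Cliff(7)$-structure of the second alternative, valid for any $\xi\neq 0,-\eta_i$. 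The procedure is blocked exactly when the original data already force $J_1J_2=J_3$ (equivalently $J_1J_2J_3=-\id_{\Ro}$): in $\Cl(7)$ the element $e_1e_2e_3$ must anticommute with $e_4,\dots,e_7$, whereas $-\id_{\Ro}$ commutes with everything, so no extension compatible with the given $J_1,J_2,J_3$ exists. This obstructed case is the first alternative, and the same obstruction argument yields mutual exclusivity of the two cases.

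For Part~2, I realize the octonions as the canonical $\Cl(7)$-module via right multiplication. For $v\in\Oc'$, $R_v:X\mapsto Xv$ is orthogonal by Artin's theorem on alternativity ($\|Xv\|=\|X\|\|v\|$) and satisfies $R_v^2 = -\|v\|^2\id_{\Oc}$ by the alternative law $(Xv)v=Xv^2$; for orthogonal $u,v\in\Oc'$, the alternating property of the associator in an alternative algebra gives
\[
R_vR_u(X) + R_uR_v(X) = (Xu)v + (Xv)u = X(uv+vu) = 0,
\]
so $R_u,R_v$ anticommute. Hence for any orthonormal basis $f_1,\dots,f_7$ of $\Oc'$, $(R_{f_1},\dots,R_{f_7})$ is a $\Cliff(7)$-system on $\Oc$, and by a sign flip I arrange $R_{f_1}\cdots R_{f_7} = \id_{\Oc}$. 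Now both $\Ro$ and $\Oc$ are irreducible $\Cl(7)$-modules with volume element acting as the identity; by uniqueness of such modules up to orthogonal equivalence there is a linear isometry $\iota_1:\Ro\to\Oc$ intertwining the two Clifford actions, and taking $\iota_2:\br^7\to\Oc'$ to send the standard basis to $(f_1,\dots,f_7)$ yields $\iota_1(J_uX)=\iota_1(X)\iota_2(u)$ as claimed. The main obstacle throughout is promoting Clifford-module isomorphisms to orthogonal equivalences; this requires either a Schur-lemma argument exploiting an invariant inner product on the module, or an explicit verification against the octonion model.
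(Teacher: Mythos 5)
Your overall strategy coincides with the paper's: Part~1 rests on the identity \eqref{eq:8sq} (which you prove by the same orthonormal-frame / Jacobi-operator observation the paper uses) plus the Clifford-module fact that any $\Cl(\nu)$-system on $\Ro$ except the $\Cl(3)$ ones with $J_1J_2=\pm J_3$ is the restriction of a $\Cl(7)$-system, and Part~2 rests on uniqueness of the irreducible $\Cl(7)$-module with volume element $+\id$ and its realization as right multiplication on $\Oc$ (the paper cites \cite{Nmm} for both and outlines exactly this reasoning). So the route is not genuinely different.

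There is, however, one genuine gap. You claim that ``the same obstruction argument yields mutual exclusivity of the two cases,'' but the obstruction you exhibit only shows that a \emph{fixed} triple $J_1,J_2,J_3$ with $J_1J_2=J_3$ cannot be enlarged to an anticommuting family $J_1,\dots,J_7$. The lemma's ``exactly one'' is a statement about the tensor $\mathcal{R}$ itself: it asserts that $\mathcal{R}$ cannot simultaneously admit \emph{some} $\Cliff(3)$-structure with $J_1J_2=J_3$ and \emph{some} $\Cliff(7)$-structure, where the two structures need not share any $J_i$'s. A priori, writing $\mathcal{R}=\la_0 R^1+\sum_{i=1}^3\eta_i R_{J_i}$ with $J_1J_2=J_3$ does not preclude the existence of an unrelated anticommuting family $\tilde J_1,\dots,\tilde J_7$ with $\tilde J_1\cdots\tilde J_7=\id$ and nonzero $\tilde\eta_i$ realizing the same $\mathcal{R}$; indeed, after normalizing via \eqref{eq:8sq} the eigenvalue data of the Jacobi operators can be made to match (four of the $\tilde\eta_i$ equal), so the eigenvalue count alone does not settle it. Ruling this out requires an actual comparison of the eigenspace geometry (equivalently, of the curvature operator on $2$-forms), which is precisely the nontrivial content that the paper outsources to \cite[Lemma~5]{Nmm}. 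Your sketch as written does not fill that in.

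Two minor points: the phrase ``standard Radon--Hurwitz theory lets me extend'' is not quite the right citation — the Radon--Hurwitz bound gives the \emph{maximal} $\nu$ for a given $n$, whereas what you need is the structure of the representation categories of $\Cl(\nu)$ for $\nu\le 7$ (simplicity for $\nu\ne 3,7$, the two-sided decomposition for $\nu=3,7$) and the fact that the restriction of the irreducible $\Cl(7)$-module to $\Cl(3)$ is the ``mixed'' module. And the step promoting a $\Cl(7)$-module isomorphism $\Ro\to\Oc$ to an isometry is a genuine (if routine) step you flag but do not carry out; the quick way is that the space of $J_i$-invariant inner products on an irreducible module is one-dimensional by Schur, so any module isomorphism becomes an isometry after rescaling.
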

\begin{proof}
1. This assertion is proved in \cite[Lemma~5]{Nmm}. The proof is based on the fact that every representation
$\sigma$ of $\Cl(\nu)$ on $\Ro$, except for the representations of $\Cl(3)$ with $J_1J_2=\pm J_3$, is a restriction of
a representation of $\Cl(7)$ on $\Ro$, to $\Cl(\nu) \subset \Cl(7)$.
It follows that the almost Hermitian structures $J_1, \dots , J_\nu$ defined by $\sigma$ can be complemented
by almost Hermitian structures $J_{\nu+1}, \dots , J_7$ such that $J_1, \dots , J_7$ anticommute, and so
$\mathcal{R}$ can be written in the form \eqref{eq:cs}, with a formal summation up to $7$ on the right-hand side
(but with $\eta_i=0$ when $i=\nu+1, \dots, 7$). To obtain a $\Cliff(7)$-structure for $\mathcal{R}$, according to
Definition~\ref{d:cl}, we only need to make all the $\eta_i$'s nonzero. This can be done using the identity
\begin{equation}\label{eq:8sq}
\< X, Z \> Y - \< Y, Z \> X = \sum\nolimits_{i=1}^7 \tfrac 13
(2 \< J_iX, Y \> J_iZ + \< J_iZ, Y \> J_iX - \< J_iZ, X \> J_iY)
\end{equation}
(which is obtained from the polarized identity $\|X\|^2 Y - \<X, Y\> X = \sum_{i=1}^7\< J_iX, Y \> J_iX$ which
follows from the fact that, for $X \ne 0$, the vectors $\|X\|^{-1}X, \|X\|^{-1}J_1X, \dots, \|X\|^{-1}J_7X$ form
an orthonormal basis for $\Ro$). Then by \eqref{eq:cs}, $\mathcal{R}$ has a Clifford structure
$\Cliff(7 ; J_1, \dots, J_7; \la_0-3\xi, \eta_1+\xi, \dots , \eta_\nu+\xi, \xi, \dots, \xi)$, for any
$\xi \ne -\eta_i, 0$.

2. This assertion is also proved in \cite{Nmm} (see the beginning of Section~5.1). The proof is based on the
following. There are two nonisomorphic representations of $\Cl(7)$ on $\Ro$. Identifying $\Ro$ with the octonion
algebra $\Oc$ via a linear isometry these representations are given by the orthogonal multiplications $J_uX=uX$ and
$J_uX=Xu$ respectively \cite[\S~I.8]{LM}. As $(uX)^*=X^*u^*=-X^*u$ for all
$u, X \in \Oc, \; u \perp 1$, the first representation is orthogonally equivalent to the second one, with the
operators $J_i$ replaced by $-J_i$. Since changing the signs of the $J_i$'s does not affect the form of the
algebraic curvature tensor \eqref{eq:cs}, we can always assume that a $\Cliff(7)$-structure for an algebraic
curvature tensor on $\Ro$ is given by the orthogonal multiplication $J_uX =\iota_1(X) \iota_2(u)$.
\end{proof}

In the proof of Theorem~\ref{t:co} for $n=8$, we will usually identify $\Ro$ with $\Oc$ and of $\br^7$ with $\Oc'$ via
some fixed linear isometries $\iota_1, \iota_2$ and simply write the orthogonal multiplication in the form
\begin{equation}\label{eq:JuX8}
J_uX = X u,
\end{equation}
where $X \in \Ro = \Oc, \; u \in \Oc'$.
The proof of Theorem~\ref{t:co} for $n=8$ extensively uses the computations in the octonion algebra $\Oc$
(in particular, the standard identities like $a^*=2 \<a,1\> 1 - a, \; \<a,b\>=\<a^*, b^*\>=\frac12(a^*b+b^*a), \;
a(ab)=a^2b, \; \<a, bc\> = \<b^*a, c\> = \<ac^*, b\>, \; (ab^*)c+(ac^*)b = 2\<b, c\>a, \;
\<ab,ac\>=\<ba,ca\>=\|a\|^2 \<b,c\>$, for any $a, b, c \in \Oc$,
and the similar ones, see e.g. \cite[Section~IV]{HL}) and
the fact that $\Oc$ is a division algebra (in particular, any nonzero octonion is invertible:
$a^{-1} = \|a\|^{-2} a^*$). We will also use the \emph{bioctonions} $\Oc \otimes \bc$, the algebra over the $\bc$ with
the same multiplication table as that for $\Oc$. As all the above identities are polynomial, they still hold for
bioctonions, with the complex inner product on $\bc^8$, the underlying linear space of $\Oc \otimes \bc$. However,
the bioctonion algebra is not a division algebra (and has zero-divisors: $(i1+e_1)(i1-e_1)=0$).

\subsection{Technical lemma}
\label{ss:lemma}

In the proof of Theorem~\ref{t:co}, we will use the following lemma.

\begin{lemma}\label{l:level}
Suppose that $n > 4$, and additionally, if $n=8$, then $\nu \le 3$, and if $n=16$, then $\nu \le 7$.

1. Let $F:\Rn \to \Rn$ be a homogeneous polynomial map of degree $m$ such that for all $X \in \Rn$, $F(X) \in \cJ X$
(respectively $F(X) \in \cI X$). Then there exist homogeneous polynomials $c_i, \; i=1, \dots, \nu$
(respectively $i=0, 1, \dots, \nu$), of degree $m-1$ such that
$F(X)=\sum_{i=1}^\nu c_i(X) J_iX$ (respectively $F(X)=c_0(X) X + \sum_{i=1}^\nu c_i(X) J_iX$).

2.
Let $1 \le k \le \nu$ and let $a_j, \; 1 \le j \le \nu, \, j \ne k$, be $\nu-1$ vectors in $\Rn$ such
that for all $Y \in \Rn$,
\begin{equation}\label{eq:JJk}
\sum\nolimits_{j \ne k} (\<a_j, J_kY\> J_jY + \<a_j, Y\> J_kJ_jY)=0.
\end{equation}
Then either $a_j=0$ for all $j \ne k$, or $\nu=1$, or $\nu=3, \; J_1J_2 =\ve J_3, \; \ve = \pm 1$,
and $a_j= J_j v$ for all $j \ne k$, where $v \ne 0$.

3. Suppose $n$ and $\nu$ are arbitrary numbers satisfying \eqref{eq:radon}.
Let $N^n$ be a smooth Riemannian manifold and let $J_1,\dots,J_\nu$ be anticommuting almost Hermitian
structures on $N^n$. Suppose that for every nowhere vanishing smooth vector field $X$ on $N^n$, the distribution
$\cJ X=\Span(J_1X,\dots, J_\nu X)$ is smooth (that is, the $\nu$-form $J_1X \wedge \dots \wedge J_\nu X$ is
smooth). Then for every $x \in N^n$, there exists a neighbourhood $\mathcal{U}=\mathcal{U}(x)$ and
smooth anticommuting almost Hermitian structures $\tilde J_1,\dots,\tilde J_\nu$ on $\mathcal{U}$ such that
$\Span(\tilde J_1X, \dots, \tilde J_\nu X)=\Span(J_1X,\dots, J_\nu X)$, for any vector field $X$ on $\mathcal{U}$.
\end{lemma}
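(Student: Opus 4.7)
The plan is to treat the three assertions separately, in increasing order of subtlety.

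For assertion 1, when $X \ne 0$ the vectors $J_1X,\ldots,J_\nu X$ (together with $X$ in the $\cI$-case) form an orthogonal set with nonzero norms $\|X\|$, so the decomposition $F(X) = c_0(X) X + \sum_{i=1}^\nu c_i(X) J_iX$ (or without the $c_0$ term) is unique, with $c_i(X) = \|X\|^{-2}\<F(X), J_iX\>$. It remains to show these a priori rational functions are homogeneous polynomials of degree $m-1$, equivalently that $\|X\|^2$ divides each polynomial $\<F(X), J_iX\>$ in $\br[X_1,\ldots,X_n]$. Since $n > 4$ ensures $\|X\|^2$ is irreducible over $\bc$, this reduces to showing $\<F(X), J_iX\>$ vanishes on the complex null cone $\{\|X\|^2 = 0\}$. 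I would proceed by induction on $m$: the inductive step subtracts an explicit multiple of $J_iX$ to reduce to the base case $m=1$, which asserts that any skew endomorphism $A$ with $Av \in \cJ v$ for all $v$ lies in $\Span(J_1,\dots,J_\nu)$. This base case is where the hypotheses on $n$ and $\nu$ enter---they rule out low-dimensional accidents (like the second quaternionic structure on $\br^4$ commuting with the given one) that would produce spurious skew-symmetric solutions. This step is the principal technical obstacle.

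For assertion 2, view \eqref{eq:JJk} as a quadratic polynomial identity in $Y$. For $\nu = 1$ the sum is empty. For $\nu = 2$ the sum has a single term, and substituting $Y = a_j$ for the one nonempty index $j$ immediately yields $\|a_j\|^2 J_kJ_j a_j = 0$, hence $a_j = 0$ (since $J_kJ_j$ is invertible). For $\nu \ge 3$, the summands involve the operators $J_j$ and $J_kJ_j$; taking inner products of \eqref{eq:JJk} against $J_lY$ and $J_kJ_lY$ for various $l$ produces a linear system in the $a_j$. Generically these operators are linearly independent elements of the Clifford algebra acting on $\br^n$, forcing $a_j = 0$. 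The exceptional case $\nu = 3$, $J_1J_2 = \ve J_3$ arises precisely because $J_kJ_j$ then collapses to $\pm J_l$ (with $l$ the third index), cancellations occur, and a one-parameter family of solutions survives; a direct computation identifies it as $a_j = J_jv$ (up to the ambiguity in the Clifford sign conventions).

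For assertion 3, the key step is to show that the subspace $S(x) := \Span(J_1(x),\ldots,J_\nu(x)) \subset \mathfrak{so}(T_xN)$ varies smoothly with $x$. Once $S$ is a smooth rank-$\nu$ subbundle, I pick on a neighbourhood $\mathcal{U}$ a smooth orthonormal frame $\tilde J_1,\ldots,\tilde J_\nu$ of $S$ with respect to the trace pairing $(A,B) \mapsto -\tfrac{1}{2n}\Tr(AB)$ (for which the original $J_i$ are orthonormal); writing $\tilde J_i = \sum_j b_{ij} J_j$ with $b \in O(\nu)$ and expanding $J_iJ_j + J_jJ_i = -2\delta_{ij}\id$, a direct computation gives $\tilde J_i^2 = -\id$ and $\tilde J_i\tilde J_j + \tilde J_j\tilde J_i = 0$, so the $\tilde J_i$ are anticommuting almost Hermitian structures; and $\Span(\tilde J_iX) = \cJ X$ is automatic. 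Smoothness of $S$ itself is established by fixing a smooth nowhere-vanishing vector field $X$ on $\mathcal{U}$ and a smooth orthonormal frame $f_1,\ldots,f_\nu$ of the smooth distribution $\cJ X$: the candidate operators are uniquely determined by the conditions $\tilde J_iX = f_i$ and $\tilde J_i \in S$, and their smoothness as endomorphisms follows from the smoothness of $f_i$ combined with the anticommutation relations among the $J$'s.
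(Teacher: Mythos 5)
Your proposal has the right shape but leaves genuine gaps at precisely the points where the paper does real work, and in each case the missing idea is the heart of the proof.

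For assertion 1, you correctly reduce to showing that each polynomial $f_i(X)=\<F(X),J_iX\>$ is divisible by $\|X\|^2$, and you acknowledge you do not have an argument for the reduced claim. Your proposed route (induction on $m$, reducing to a claim about skew endomorphisms at $m=1$) is not obviously sound: subtracting a multiple of $J_iX$ does not reduce the degree of $F$ unless the $c_i$ are already known to be polynomial, which is what is being proved. The paper's actual mechanism is quite different and is the key idea of the whole lemma. From $\|X\|^2\|F(X)\|^2=\sum_i f_i^2(X)$ one deduces $\sum_i\hat f_i^2=0$ in $\mathbf R=\br[X]/(\|X\|^2)$; if some $\hat f_i\ne0$, one obtains $-1$ as a sum of $\nu$ squares in the field of fractions $\mathbb F\cong\br(x_1,\dots,x_{n-1},\sqrt{-d})$. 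Pfister's theorem on the \emph{level} of this field (it equals $2^l$ with $2^l<n\le 2^{l+1}$) then gives a contradiction precisely when $\nu<2^l<n$, which is where the dimensional hypotheses (via Lemma~\ref{l:nvsnu}(iii)) enter. Your null-cone reformulation is not wrong in spirit, but it does not by itself see this arithmetic of sums of squares, which is the decisive ingredient.

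For assertion 2, your $\nu=1$ and $\nu=2$ cases are fine (your substitution $Y=a_j$ is a correct shortcut, comparable to the paper's use of $J_1J_2Y$). But for $\nu=3$ you only gesture at why the symmetric operator $J_kJ_iJ_j$ must be $\pm\id$, and for $\nu>3$ you wave at a ``linear system'' being generically nondegenerate. The paper's proof here polarizes \eqref{eq:JJk}, observes that $L=\Span(a_j)$ is $J_k$-invariant, and for $\nu>3$ chooses an auxiliary unit $w\perp u(Y),v(Y),e_k$ to produce a symmetric orthogonal operator $\|v(Y)\|^{-1}J_wJ_kJ_{v(Y)}$ whose isotropic subspace would have to be too large; the inequality $n>4\nu-2$ (Lemma~\ref{l:nvsnu}(ii)) is what closes this off. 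None of that is in your sketch.

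For assertion 3, your argument is circular. You claim that $\tilde J_i$ is ``uniquely determined by $\tilde J_iX=f_i$ and $\tilde J_i\in S$,'' and that its smoothness ``follows from the smoothness of $f_i$ combined with the anticommutation relations.'' But $S$ being smooth is what you are trying to prove, and the anticommutation relations only pin down $\tilde J_i$ on $\cI X$, which has dimension $\nu+1<n$, so they do not determine the operator on all of $T_xN$ from its value along one vector field. This is precisely the subtle point: the hypothesis gives smoothness of the $\nu$-form $J_1X\wedge\dots\wedge J_\nu X$, not of the span $\Span(J_1,\dots,J_\nu)\subset\mathfrak{so}(n)$. The paper's proof overcomes this with an explicit matrix argument: it first shows all the $P_i$ blocks of $\tilde J E_i$ are smooth via coefficients of $t^{\nu-1}$ in $\nu\times\nu$ minors of $\tilde J(E_i+tE_n)$; then it chooses an auxiliary vector field $Y$ with $\cJ Y\cap\cJ E_n=0$ (which requires $2\nu\le n$) to recover the $K_i$ blocks from minors of $\tilde J(tE_i+Y)$; finally the cases $n=8,\nu\in\{5,6,7\}$ where $2\nu>n$ are handled separately by complementing to a $\Cliff(7)$-structure via Lemma~\ref{l:formofR}. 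None of this machinery appears in your sketch, and without it the step from ``$J_1X\wedge\dots\wedge J_\nu X$ smooth'' to ``$S$ smooth'' is unjustified.
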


\begin{proof}
1. It is sufficient to prove the assertion for the case $F(X) \in \cI X$.

As for every $X \ne 0$, the vectors $X, J_1X, \dots, J_\nu X$ are orthogonal and have the same length $\|X\|$, we
have $\|X\|^2F(X) = f_0(X)X + \sum_{i=1}^\nu f_i(X) J_iX$, where $f_0(X) = \<F(X),X\>, \; f_i(X) = \<F(X),J_iX\>$
are homogeneous polynomials of degree $m+1$ of $X$ (or possibly zeros). Taking the squared lengths of the both sides
we get $\|X\|^2 \|F(X)\|^2 = f_0^2(X) + \sum_{i=1}^\nu f_i^2(X)$, so the sum of squares of $\nu+1$ polynomials
$f_0(X), f_1(X), \dots, f_\nu(X)$ is divisible by $\|X\|^2$. Let for $X = (x_1, \dots, x_n), \; (\|X\|^2)$ be the
ideal of $\br[X]$ generated by $\|X\|^2 = \sum_j x_j^2$, and let $\mathbf{R} = \br[X]/(\|X\|^2)$.
We have $\sum_{i=0}^\nu \hat{f}_i^2=0$, where $\hat{f}_i$ is the image of $f_i$ under the natural projection
$\pi:\br[X] \to \mathbf{R}$. If at least one of the $\hat{f}_i$'s is nonzero (say the $\nu$-th one), then
$\sum_{i=0}^{\nu-1} (\hat{f}_i/\hat{f}_\nu)^2=-1$ in $\mathbb{F}$, the field of fractions of the ring $\mathbf{R}$.
The field $\mathbb{F}$ is isomorphic to the field $\mathbb{L}_{n-1} = \br(x_1, \ldots, x_{n-1}, \sqrt{-d})$, where
$d=x_1^2 + \ldots + x_{n-1}^2$ (an isomorphism from $\mathbb{L}_{n-1}$ to $\mathbb{F}$ is induced by the map
$(a + b \sqrt{-d})/c \to (a + bx_n)/c$, with $a, b, c \in \br[x_1, \dots, x_{n-1}], \; c \ne 0$). By
\cite[Theorem 3.1.4]{Pf}, the \emph{level} of the field $\mathbb{L}_{n-1}$, the minimal number of elements whose
sum of squares is $-1$, is $2^l$, where $2^l < n \le 2^{l+1}$. It follows that in all the cases when
$\nu < 2^l <n$ we arrive at a contradiction. This means that $\hat{f}_i=0$, for all $i=0, \dots, \nu$, so each
of the $f_i$'s is divisible by $\|X\|^2$ in $\br[X]$, so
$F(X) = (\|X\|^{-2}f_0(X))X + \sum_{i=1}^\nu (\|X\|^{-2}f_i(X)) J_iX$, with all the nonzero coefficients on the
right-hand side being homogeneous polynomials of degree $m-1$. The claim now follows from assertion~(iii) of
Lemma~\ref{l:nvsnu}.

2. If $\nu=1$, equation \eqref{eq:JJk} is trivially satisfied. If $\nu=2$, the claim immediately follows
by taking the inner product of \eqref{eq:JJk} with $J_1J_2Y$. If $\nu=3$, let $k=3$ (without loss of generality).
Taking the inner product of \eqref{eq:JJk} with $J_1Y$ we obtain
$\<a_1, J_3Y\> \|Y\|^2 = \<a_j, Y\> \<J_1J_3J_2Y,Y\>$. It follows that the polynomial $\<J_1J_3J_2Y,Y\>$ is divisible
by $\|Y\|^2$. As the operator $J_1J_3J_2$ is symmetric and orthogonal, it equals $\pm \id$. Hence
$J_1J_2=\varepsilon J_3, \; \varepsilon=\pm1$. Then \eqref{eq:JJk} takes the form
$\<a_1, J_3Y\> J_1Y + \<a_2, J_3Y\> J_2Y +\varepsilon\<a_1, Y\> J_2Y-\varepsilon\<a_2, Y\> J_1Y=0$, which is equivalent
to $a_1=-\varepsilon J_3a_2$. Acting by $J_1$ on the both sides we obtain $J_1a_1=J_2a_2$, so $a_j=J_jv$, with
$v=-J_1a_1=-J_2a_2$ (we can assume $v \ne 0$, as otherwise $a_j=0$).

Now assume $\nu > 3$ and denote $L = \Span(a_j)$. As it follows from \eqref{eq:JJk}, if $Y \perp L$,
then $J_kY \perp L$, so $L$ is $J_k$-invariant. Polarizing \eqref{eq:JJk} we obtain
$\sum\nolimits_{j \ne k} (\<a_j, J_kX\> J_jY + \<a_j, X\> J_kJ_jY)+
\sum\nolimits_{j \ne k} (\<a_j, J_kY\> J_jX + \<a_j, Y\> J_kJ_jX)=0$. It follows that for all $X \perp L$ and all
$Y \in \Rn$, $\sum\nolimits_{j \ne k} (\<a_j, J_kY\> J_jX + \<a_j, Y\> J_kJ_jX)=0$, that is,
$J_{u(Y)}X= J_kJ_{v(Y)}X$, where $u(Y) =\sum_{j \ne k} \<a_j, J_kY\> e_j, \; v(Y)=\sum_{j \ne k} \<a_j, Y\> e_j$. Note
that $u(Y), v(Y) \perp e_k$. Now, fix an arbitrary $Y \in \Rn$ and choose a unit vector $w \perp u(Y),v(Y),e_k$ (this
is possible, as $\nu >3$). Then $J_wJ_{u(Y)}X= J_wJ_kJ_{v(Y)}X$, so $\<J_wJ_kJ_{v(Y)}X,X\>=0$, for all
$X \in L^\perp$. If $v(Y) \ne 0$, then the operator $\|v(Y)\|^{-1}J_wJ_kJ_{v(Y)}$ is symmetric and orthogonal, so the
maximal dimension of its isotropic subspace is $\frac12 n < n-(\nu-1)=\dim L^\perp$ (the inequality follows from
assertion~(ii) of Lemma~\ref{l:nvsnu}), which is a contradiction. Hence $v(Y)=0$ for all $Y \in \Rn$, so all the
$a_j$'s are zeros.

3. We first prove the lemma assuming $2\nu \le n$. In this case, the proof closely follows the arguments of the proof
of \cite[Lemma~3.1]{Nhjm}.

Let $Y_0 \in T_xN^n$ be a unit vector. As $2\nu \le n$, there exists a unit vector $E \in T_x N^n$ which is not in
the range of the map $\Phi: S^{\nu-1} \times S^{\nu-1} \to S^{n-1}$ defined by $\Phi(u,v)= J_uJ_vY_0$. Then
$\cJ E \cap \cJ Y_0=0$. It follows that on some neighbourhood $\mathcal{U}'$ of $x$ there exist smooth unit
vector fields $Y$ and $E_n$ such that $E_n(x)=E, \; Y(x)=Y_0$ and $\cJ E_n \cap \cJ Y=0$ at every point $y \in \mU'$.
By the assumption, the $\nu$-dimensional distribution $\cJ E_n$ is smooth, so we can choose $\nu$
smooth orthonormal sections $E_1, \dots, E_\nu$ of it, and then define
anticommuting almost Hermitian structures $\tilde J_\a$ on $\mU'$ by $\tilde J_\a E_n = E_\a$ (so that
$\tilde J_\a= \sum_{\b=1}^\nu a_{\a\b}J_\b$, where $(a_{\a\b})$ is the $\nu \times \nu$ orthogonal matrix given by
$a_{\a\b}=\<E_\a,J_\b E_n\>$).

Let $E_{\nu+1}, \dots, E_{n-1}$ be orthonormal vector fields on $\mU'$ such that $E_1, \dots, E_n$ is an orthonormal
frame, and let, for a vector field $X$ on $\mU'$, $\tilde J X$ denote the $n \times \nu$ matrix whose column vectors
are $\tilde J_1 X, \dots, \tilde J_\nu X$ relative to the frame $E_1, \dots, E_n$. Then
$(\tilde J X)^t \tilde J X = \|X\|^2 I_\nu$ and all the $\nu \times \nu$ minors of the matrix $\tilde J X$ are
smooth functions on $\mU'$. Moreover, the entries of the matrices $\tilde J E_i, \; i=1, \dots, n$, are the
rearranged entries of the matrices $\tilde J_\a, \a=1, \dots, \nu$, relative to the basis $\{E_i\}$, so to prove
that the $\tilde J_\a$'s are smooth it suffices to show that all the entries of the matrices
$\tilde J E_i$ are smooth (on a possibly smaller neighbourhood). Denote
$\tilde J E_i=\left(\begin{smallmatrix} K_i \\ P_i \end{smallmatrix}\right)$, where $K_i$ and $P_i$ are
$\nu \times \nu$ and $(n-\nu) \times \nu$ matrices-functions on $\mU'$ respectively
(note that $\tilde J E_n=\left(\begin{smallmatrix} I_\nu \\ 0 \end{smallmatrix}\right)$). For an arbitrary $t\in \br$,
all the $\nu \times \nu$ minors of the matrix
$\tilde J (E_i+tE_n)=\left(\begin{smallmatrix} K_i+tI_\nu \\ P_i \end{smallmatrix}\right)$ are smooth.
For every entry $(P_i)_{k\a}, \; k=\nu+1, \dots, n, \; \a=1, \dots, \nu$, the coefficient of $t^{\nu-1}$ in the
$\nu \times \nu$ minor of $\tilde J (E_i+tE_n)$ consisting of $\nu-1$ out of the first $\nu$ rows (omitting the
$\a$-th row) and the $k$-th row is $\pm (P_i)_{k\a}$, so all the entries of all the $P_i$'s are smooth.

For the vector field $Y$, constructed at the beginning of the proof, denote
$\tilde J Y=\left(\begin{smallmatrix} K \\ P \end{smallmatrix}\right)$. As $P=\sum_{i=1}^n\<Y,E_i\>P_i$, all the
entries of $P$ are smooth on $\mU'$. Moreover, as $\cI Y \cap \cI E_n = 0$, the spans of the
vector columns of the matrices $\tilde J Y$ and
$\tilde J E_n=\left(\begin{smallmatrix} I_\nu \\ 0\end{smallmatrix}\right)$ have trivial intersection, so
$\rk\, P = \nu$, at every point $y \in \mU'$. Therefore we can choose the rows $\nu+1 \le b_1 < \dots < b_\nu \le n$
of the matrix $P$ at the point $x$ such that the corresponding minor $P_{(b)}=P_{b_1\dots b_\nu}$ is nonzero. Then the
same minor $P_{(b)}$ is nonzero on a (possibly smaller) neighbourhood $\mU \subset \mU'$ of $x$. Taking all the
$\nu \times \nu$ minors of $\tilde J Y$ consisting of $\nu-1$ out of $\nu$ rows of $P_{(b)}$ and one row of $K$ we
obtain that all the entries of $K$ are smooth on $\mU$. Moreover, for an arbitrary $t\in \br$,
all the $\nu \times \nu$ minors of the matrix
$\tilde J (tE_i+Y)=\left(\begin{smallmatrix} tK_i+K \\ tP_i+P \end{smallmatrix}\right)$ are smooth.
Computing the coefficient of $t$ in all the $\nu \times \nu$ minors of $\tilde J (tE_i+Y)$ consisting of $\nu-1$ out
of $\nu$ rows of $(tP_i+P)_{(b)}$ and one row of $tK_i+K$ and using the fact that all the entries of $K, P$ and $P_i$
are smooth on $\mU$ we obtain that all the entries of $K_i$ are also smooth on $\mU$.
Therefore all the entries of all the matrices $\tilde J E_i$ are smooth on $\mU$, hence the anticommuting
almost Hermitian structures $\tilde J_\a$ are also smooth on $\mU$.

As $\nu$ and $n$ must satisfy inequality \eqref{eq:radon} (hence the inequalities of Lemma~\ref{l:nvsnu}), the above
proof works in all the cases except for the following: $n=4, \, \nu=3$ and $n=8, \nu=5,6,7$. The case $n=4, \, \nu=3$
is easy: taking any smooth orthonormal frame $E_i$ on a neighbourhood of $x$ and defining
$\tilde J_\a= \sum_{\b=1}^3 a_{\a\b}J_\b$ (with the orthogonal $3 \times 3$ matrix $(a_{\a\b})$ given by
$a_{\a\b}=\<E_\a,J_\b E_4\>$) we obtain that all the entries of the $\tilde J_\a$ relative to the basis $E_i$ are
$\pm 1$ and $0$.

The proof in the cases $n=8, \nu=5,6,7$ is based on the fact that any set of
anticommuting almost Hermitian structures $J_1, \dots, J_\nu$ on $\Ro$, except when $\nu=3$ and $J_1J_2=\pm J_3$,
can be complemented by almost Hermitian structures $J_{\nu+1}, \dots, J_7$ to a set $J_1, \dots, J_7$ of
anticommuting almost Hermitian structures on $\Ro$ (assertion~1 of Lemma~\ref{l:formofR}).

If $n=8, \; \nu=7$, choose an arbitrary smooth almost Hermitian structure $J_7$ on some neighbourhood $\mU$ of
$x$ and complement it by anticommuting almost Hermitian structures $J_1, \dots, J_6$ at every point of $\mU$.
Then $\Span(J_1X, \dots, J_6X)=(\Span(X, J_7X))^\perp$ is a smooth distribution, for every smooth nowhere
vanishing vector field $X$ on $\mU$. This reduces the case $n=8, \; \nu=7$ to the case $n=8, \; \nu=6$.

Let $n=8, \; \nu=6$, and let $J_7$ be an almost Hermitian structure complementing $J_1, \dots, J_6$ at every point
$x \in N^n$. Using the first part of the proof (or the fact that $J_7X$ spans the one-dimensional smooth
distribution $(\Span(J_1X, \dots, J_6X) \oplus \br X)^\perp$, for every nonvanishing smooth vector field $X$)
we can assume that $J_7$ is smooth on a neighbourhood $\mU$ of $x \in N^n$. Choose a smooth
orthonormal frame $E_1, \dots, E_8$ on (a possibly smaller neighbourhood) $\mU$ such that the matrix of $J_7$ relative
to $E_i$ is $\left(\begin{smallmatrix} 0 & I_4 \\ -I_4 & 0 \end{smallmatrix}\right)$ and define the almost Hermitian
structure $\tilde J_6$ on $\mU$ by
$\tilde J_6E_2=E_1, \, \tilde J_6E_4=E_3, \, \tilde J_6E_6=-E_5, \, \tilde J_6E_8=-E_7$. Then $J_7$ and $\tilde J_6$
anticommute, hence we can complement them by almost Hermitian structures $J_1', \dots, J_5'$ on $\mU$ in such a way
that $J_1', \dots, J_5', \tilde J_6, J_7$ are anticommuting almost Hermitian structures. Moreover, as both $J_7$ and
$\tilde J_6$ are smooth on $\mU$, the five-dimensional distribution
$\Span(J_1'X,\dots, J_5'X)=(\Span(X, J_7X, \tilde J_6X))^\perp$ is smooth, for every smooth nowhere
vanishing vector field $X$ on $\mU$. This reduces the case $n=8, \; \nu=6$ to the case $n=8, \; \nu=5$. Indeed, if
$\tilde J_1, \dots, \tilde J_5$ are smooth anticommuting almost Hermitian structures on $\mU$ such that
$\Span(\tilde J_1X,\dots, \tilde J_5X)=\Span(J_1'X,\dots, J_5'X)$, for every vector field $X$, then
$\tilde J_1, \dots, \tilde J_5, \tilde J_6$ are the required almost Hermitian structures, as
$\Span(\tilde J_1X,\dots, \tilde J_6X)=\Span(J_1'X,\dots, J_5'X, \tilde J_6X)=(\Span(X, J_7X))^\perp
=\Span(J_1X,\dots, J_6X)$, for every vector field $X$ on $\mU$, and $\tilde J_6$ anticommutes with every
$\tilde J_\a, \; \a=1, \dots, 5$, since it anticommutes with every $J_\a', \; \a=1, \dots ,5$.

Let $n=8, \; \nu=5$, and let $J_6, J_7$ be anticommuting almost Hermitian structures
complementing $J_1, \dots, J_5$ at every point $x \in N^n$. As $\Span(J_6X, J_7X)=(\Span(J_1X,\dots, J_5X))^\perp$,
by the first part of the proof, we can choose such $J_6$ and $J_7$ to be smooth on a neighbourhood $\mU$
of $x \in N^n$. Choose a smooth
orthonormal frame $E_1, \dots, E_8$ on (a possibly smaller neighbourhood) $\mU$ as follows. First choose an arbitrary
smooth unit vector field $E_1$ on $\mU$. The vector fields $J_6E_1$ and $J_7E_1$ are orthonormal; set
$E_2=-J_6E_1$, $E_3=-J_7E_1$. The unit vector field $J_6J_7E_1$ is orthogonal to $E_1, J_6E_1, J_7E_1$; set
$E_4=-J_6J_7E_1$. Choose an arbitrary smooth unit section $E_5$ of the smooth distribution
$(\Span(E_1, E_2, E_3, E_4))^\perp$ on $\mU$. That distribution is both $J_6$- and $J_7$-invariant, so we can set,
similar to above, $E_6=J_6E_5, \; E_7=J_7E_5$, $E_8=-J_6J_7E_5$. Now define the almost Hermitian
structure $\tilde J_5$ on $\mU$ whose matrix relative to the frame $E_i$ is
$\left(\begin{smallmatrix} 0 & I_4 \\ -I_4 & 0 \end{smallmatrix}\right)$. Then $\tilde J_5,J_6,J_7$ are
anticommuting almost Hermitian structures on $\mU$, with $\tilde J_5 J_6 \ne \pm J_7$, hence we
can complement them by almost Hermitian structures $J_1', \dots, J_4'$ on $\mU$ in such a way
that $J_1', \dots, J_4', \tilde J_5, J_6, J_7$ are anticommuting almost Hermitian structures. Moreover, as
$\tilde J_5,J_6,J_7$ are smooth on $\mU$, the four-dimensional distribution
$\Span(J_1'X,\dots, J_4'X)=(\Span(X, \tilde J_5X, J_6X, J_7X))^\perp$ is smooth, for every smooth
nowhere vanishing vector field $X$ on $\mU$. By the first part of the proof, we can find smooth anticommuting
almost Hermitian structures $\tilde J_1, \dots, \tilde J_4$ on (a possibly smaller) neighbourhood $\mU$ such that
$\Span(\tilde J_1X,\dots, \tilde J_4X)=\Span(J_1'X,\dots, J_4'X)$, for every vector field $X$. Then
$\tilde J_1, \dots, \tilde J_4, \tilde J_5$ are the required almost Hermitian structures, as
$\Span(\tilde J_1X,\dots, \tilde J_5X)=\Span(J_1'X,\dots, J_4'X, \tilde J_5X)=(\Span(X, J_6X, J_7X))^\perp
=\Span(J_1X,\dots, J_5X)$, for every vector field $X$ on $\mU$, and $\tilde J_5$ anticommutes with every
$\tilde J_\a, \; \a=1,2,3,4$, since it anticommutes with every $J_\a', \; \a=1,2,3,4$.
\end{proof}

\section{Conformally Osserman manifolds. Proof of Theorem~\ref{t:co}}
\label{s:conf}

Let $M^n, \; n \ne 3, 4$, be a smooth conformally Osserman Riemannian manifold. If $n=2$, the manifold is
locally conformally flat, so we can assume that $n > 4$. Combining the results of \cite{Nma} (Proposition~1 and
the second last paragraph of the proof of Theorem~1 and Theorem~2), \cite[Proposition~1]{Nmm} and
\cite[Proposition~2.1]{Nbel} we obtain that the Weyl tensor of $M^n$ has a Clifford structure, for all
$n \ne 16$, and also for $n=16$ provided the Jacobi operator $W_X$ has an eigenvalue of multiplicity
at least $9$ (note that the Jacobi operator of any Osserman algebraic curvature tensor on $\Rs$ has an eigenvalue
of multiplicity at least $7$, by topological reasons). In the latter case, $W$ has a Clifford structure
$\Cliff(\nu)$, with $\nu \le 6$, at every point on $M^n$.

To prove Theorem~\ref{t:co} it therefore suffices to prove the following theorem.

\begin{theorem} \label{t:cocl}
Let $M^n$ be a connected smooth Riemannian manifold whose Weyl tensor at every point $x \in M^n$ has a
Clifford structure $\Cliff(\nu(x))$. Suppose that $n > 4$, and additionally that if $n=16$, then $\nu(x) \le 4$.
Then there exists a space $M^n_0$ from the list
$\Rn, \bc P^{n/2}, \; \bc H^{n/2}, \; \mathbb{H}P^{n/4}, \; \mathbb{H}H^{n/4}$
(the Euclidean space and the rank-one symmetric spaces with their standard metrics) such that $M^n$
is locally conformally equivalent to $M^n_0$.
\end{theorem}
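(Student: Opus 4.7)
The plan is to prove a local version of the statement using the second Bianchi identity together with the technical lemmas of Section~\ref{s:clifford}, and then propagate the local model across the manifold by showing that the pointwise algebraic type of the Weyl tensor is locally constant.

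First I would render the Clifford structure locally smooth. At every point, the Jacobi operator $W_X$ has, by Remark~\ref{rem:climpliesos}, eigenvalue $\la_0$ on $(\cI X)^\perp$ and its remaining eigenvalues $0, \la_0 + 3\eta_i$ on $\cI X$; hence $\cI X$ is cut out by a spectral projection that varies smoothly with a smooth nonvanishing vector field $X$, and so does $\cJ X = (\br X)^\perp \cap \cI X$. Lemma~\ref{l:level}(3) then yields smooth anticommuting almost Hermitian structures $J_1,\dots,J_\nu$ on a neighbourhood $\mU$ of each point; the coefficients $\la_0,\eta_1,\dots,\eta_\nu$ arise as smooth eigenvalues of $W_X$, and on a possibly smaller $\mU$ the integer $\nu$ is constant. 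When $n=8$ and the octonionic case of Lemma~\ref{l:formofR} occurs, one first fixes the type and, outside the $\Cliff(3)$ alternative, passes to the $\Cliff(7)$-completion before smoothing.

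Next I would substitute the Clifford expression \eqref{eq:cs} into the second Bianchi identity and take $\n W$. Specializing the resulting identity to $Y=Z=V=X$ reduces it to a polynomial identity in $X$ of low degree whose left-hand side lies in $\cI X$; Lemma~\ref{l:level}(1) then expresses it as $c_0(X)X + \sum_i c_i(X) J_iX$ with polynomial coefficients. Reading off these coefficients produces equations of two kinds: transport equations for $d\la_0, d\eta_i$ in terms of the connection forms of the $J_i$; and anticommutator identities of the shape \eqref{eq:JJk}, to which Lemma~\ref{l:level}(2) applies and pins down the $\cJ X$-component of each $\n J_i$ up to a single vector $v_i$. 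The inequalities of Lemma~\ref{l:nvsnu}(ii),(iii) are precisely what make the polynomial and anticommutator arguments go through in the prescribed dimensions. The Weyl tensor is conformally invariant but $\bar\n J_i = \n J_i + (\text{explicit universal terms in } df \text{ and } J_idf)$ under $\bar g = e^{2f}g$; the freedom in $f$ is exactly what is needed to absorb the residual $v_i$ and force $\bar\n J_i = 0$ and $\bar\la_0,\bar\eta_i$ constant. On $(\mU,\bar g)$ one then has a parallel Clifford structure with constant coefficients, and the de Rham/Berger classification identifies it with an open subset of $\br^n$, $\bc P^{n/2}, \bc H^{n/2}, \mathbb HP^{n/4}$, or $\mathbb HH^{n/4}$; the $n=8$ octonionic case yields conformal flatness via identity \eqref{eq:8sq}.

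Finally, to globalize I would show that both $\nu(x)$ and, in dimension $8$, the dichotomy of Lemma~\ref{l:formofR} are locally constant on a connected $M^n$, so the local conformal equivalences patch to a single model. The key point is that the set where $W\ne 0$ is open, and within it the eigenvalue multiplicities of $W_X$ cannot jump: upper semicontinuity together with the local analysis above rules out a degeneration of $W$ to zero on an open subset, so the local model is constant on each connected component. The main obstacle I expect is the middle step: extracting from the Bianchi identity and the polynomial content of Lemma~\ref{l:level} a system clean enough that a single conformal factor $f$ can absorb its entire nonzero part. The delicate bookkeeping is the tight coupling between $d\eta_i$, the $v_i$ coming from Lemma~\ref{l:level}(2), and the trace parts of $\n J_i$; this is what discriminates between the various symmetric-space models and must be checked to be consistent in all the low-$\nu$ edge cases (particularly $n=8, \nu=7$ and $n=16, \nu\le 4$) before the conformal gauge can be completed.
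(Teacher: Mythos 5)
Your opening step (smoothing the Clifford structure via Lemma~\ref{l:level}(3)) and your overall strategy (local Bianchi identity analysis followed by globalization) match the paper's architecture, but the technical core of your argument is wrong and diverges from what the paper actually does.

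The central gap is your claim that one can choose a conformal factor $f$ that ``absorbs the residual $v_i$ and forces $\bar\n J_i=0$'' with $\bar\la_0,\bar\eta_i$ constant, so that the rescaled metric carries a \emph{parallel} Clifford structure and the de~Rham/Berger classification applies. This cannot work: a conformal change is governed by a single scalar function, whereas $\n J_i$ has many independent components. Indeed, even for the genuine models $\bc P^m, \mathbb H P^m$ the individual $J_i$'s are not parallel --- only the bundle $\Span(J_1,\dots,J_\nu)$ is, with $\n_Z J_i=\sum_j\omega_i^j(Z)J_j$ --- and after the conformal normalization in Lemma~\ref{l:nablarho}/\ref{l:nablaJ8} the vectors $b_{ij}$ still produce a nonzero $\n J_i$. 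What the conformal change actually achieves in the paper is to kill the vectors $m_i$ (equivalently, to make the $\eta_i$'s locally constant via \eqref{eq:neta}), and then \eqref{eq:skewrho} shows that the symmetric operator $\rho$ becomes a \emph{Codazzi tensor}. The punchline is the Derdzinski--Shen theorem on Codazzi tensors (that eigenspaces of $\rho$ commute with the curvature operator) combined with the purely algebraic Lemma~\ref{l:codazzi}, which forces $\rho$ to be a scalar multiple of the identity. Only at that point is the curvature tensor itself of Clifford type, and one invokes the known resolution of the Osserman Conjecture \cite[Theorem~1.2]{Nhjm}, not holonomy classification. (Also, your specialization $Y=Z=V=X$ of the Bianchi identity vanishes identically; the paper uses $Z=Y$ with $X,Y,U$ mutually orthogonal to extract nontrivial information.)

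The globalization step is also underpowered as stated. Lower semicontinuity of the eigenvalue-count function does give you that $M'$ is open and dense, and that each component $M_\a$ of $\{W\ne0\}$ carries a well-defined model $M_{\nu,\ve}$. But semicontinuity alone does not rule out the Weyl tensor degenerating to zero on a nonempty closed set $M_0$ with the complement carrying a fixed nonflat model --- that degeneration is exactly the scenario that must be excluded. The paper does this analytically: on a small geodesic ball $B\subset M_\a$ touching $M_0$ at a single boundary point $x_0$, the conformal factor $f$ (extracted from $\|W\|^2 = C_{\nu n}f^2$) satisfies $\lim_{x\to x_0}f=0$ and $\lim_{x\to x_0}\n f=0$ (Lemma~\ref{l:x_0}), while $u=f^{(n-2)/4}>0$ solves the linear elliptic equation~\eqref{eq:laplacian}. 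The Hopf boundary point lemma then forces a nonzero inward derivative of $u$ at $x_0$, a contradiction. Without this analytic step, the possibility $M_0\ne\varnothing$, $M_\a\ne\varnothing$ simultaneously is not excluded.
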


Note that by Theorem~\ref{t:cocl}, every point of $M^n$ has a neighbourhood conformally equivalent to a domain of
the same ``model space". 
%(although the Weyl tensor may a priori have different Clifford structures at different points) 
Also note  that Theorem~\ref{t:cocl}, in comparison to Theorem~\ref{t:co}, says something also in the case $n=16$.

\smallskip

We start with a brief informal sketch of the proof of Theorem~\ref{t:cocl}.
First of all, we show that the Clifford structure for the Weyl tensor can be chosen locally smooth
on an open, dense subset $M' \subset M^n$ (see Lemma~\ref{l:locc1} for the precise statement). To simplify the form
of the curvature tensor $R$ of $M^n$, we combine the $\la_0$-part of $W$ (from \eqref{eq:cs}) with the difference
$R-W$, so that $R$ has the form \eqref{eq:confcs} for some smooth symmetric operator field $\rho$, at every point
of $M'$. The technical core of the proof is Lemma~\ref{l:nablarho} and Lemma~\ref{l:nablaJ8} which establish various
identities for the covariant derivatives of $\rho$, the $J_i$'s and the $\eta_i$'s, using the
differential Bianchi identity for the curvature tensor of the form  \eqref{eq:confcs}. Lemma~\ref{l:nablaJ8}
treats the case $(n, \nu) = (8,7)$ and uses the octonion arithmetic, and Lemma~\ref{l:nablarho}, all the other cases
(and uses the fact that $\nu$ is small compared to $n$, see Lemma~\ref{l:nvsnu}).
It follows from the identities of Lemma~\ref{l:nablarho} and Lemma~\ref{l:nablaJ8} that, unless the Weyl
tensor vanishes, the metric on $M'$ can be locally changed to a conformal one whose curvature tensor
again has the form \eqref{eq:confcs}, but with the two additional features: firstly, all the
$\eta_i$'s are locally constant, and secondly, $\rho$ is a Codazzi tensor, that is, $(\n_X\rho) Y=(\n_Y\rho) X$.
By the result of \cite{DS}, the exterior products of the eigenspaces of a symmetric Codazzi tensor are invariant
under the curvature operator on the two-forms. Using that, we prove in Lemma~\ref{l:codazzi} that $\rho$ must be a
multiple of the identity, so, by \eqref{eq:confcs}, $M'$ is locally conformally equivalent to an Osserman manifold.
The affirmative answer to the Osserman Conjecture in the cases for $n$ and $\nu$ considered in
Theorem~\ref{t:cocl} \cite[Theorem~1.2]{Nhjm} implies that $M'$ is locally conformally equivalent to one of the
spaces listed in Theorem~\ref{t:cocl}. This proves Theorem~\ref{t:cocl} at the ``generic" points. To prove
Theorem~\ref{t:cocl} globally, we first show (using Lemma~\ref{l:mmeps}) that $M$ splits into a disjoint
union of a closed subset $M_0$, on which the Weyl tensor vanishes, and nonempty open connected subsets $M_\a$, each of
which is locally conformal to one of the rank-one symmetric spaces
$\bc P^{n/2}, \; \bc H^{n/2}, \; \mathbb{H}P^{n/4}, \; \mathbb{H}H^{n/4}$.
On every $M_\a$, the conformal factor $f$ is a well-defined positive smooth function. Assuming that there exists
at least one $M_\a$ and that $M_0 \ne \varnothing$ we show that there exists a point $x_0 \in M_0$ on the boundary of
a geodesic ball $B \subset M_\a$ such that both $f(x)$ and $\n f(x)$ tend to zero when $x \to x_0, \; x \in B$
(Lemma~\ref{l:x_0}). Then the positive function $u=f^{(n-2)/4}$ satisfies elliptic equation
\eqref{eq:laplacian} in $B$, with $\lim_{x \to x_0, x \in B}u(x)=0$, hence by the boundary point theorem, the limiting
value of the inner derivative of $u$ at $x_0$ must be positive.
This contradiction implies that either $M=M_0$ or $M=M_\a$.

\begin{proof}[Proof of Theorem~\ref{t:cocl}]
Let $M^n, \; n > 4$, be a connected smooth Riemannian manifold whose Weyl tensor at every point has a
Clifford structure.
Define the function $N: M^n \to \mathbb{N}$ as follows: for $x \in M^n$, $N(x)$ is the number of distinct eigenvalues
of the Jacobi operator $W_X$ associated to the Weyl tensor, where $X$ is an arbitrary nonzero vector from $T_xM^n$.
As the Weyl tensor is Osserman, the function $N(x)$ is well-defined. Moreover, as the set of symmetric
operators having no more than $N_0$ distinct eigenvalues is closed in the linear space of symmetric operators on
$\Rn$, the function $N(x)$ is lower semi-continuous (every subset $\{x \, : \, N(x) \le N_0\}$ is closed in $M^n$).
Let $M'$ be the set of points where the function $N(x)$ is continuous. It is easy to see that $M'$ is an open and
dense (but possibly disconnected) subset of $M^n$. The following lemma shows that the Clifford structure for the
Weyl tensor is locally smooth on every connected component of $M'$.

\begin{lemma} \label{l:locc1}
Let $M^n, \; n >4$, be a smooth Riemannian manifold whose Weyl tensor has a Clifford structure at every point.
If $n=16$, we additionally require that at every point $x \in M^{16}$, the Weyl tensor has a Clifford structure
$\Cliff(\nu(x))$ with $\nu(x) \ne 8$.

Let $M'$ be the (open, dense) subset of $M^n$ at the points of which the number of distinct eigenvalues of the Jacobi
operator associated to the Weyl tensor of $M^n$ is locally constant.
Then for every $x \in M'$, there exists a neighbourhood $\mathcal{U}=\mathcal{U}(x)$, a number $\nu \ge 0$,
smooth functions $\eta_1, \dots, \eta_{\nu}: \mathcal{U} \to \br \setminus \{0\}$, a smooth symmetric
linear operator  field $\rho$ and smooth anticommuting almost Hermitian structures
$J_i, \; i=1, \dots, \nu$, on $\mathcal{U}$ such that the curvature tensor of $M^n$ has the form
\begin{multline}\label{eq:confcs}
R(X, Y) Z = \< X, Z \> \rho Y +\<\rho X, Z\> Y - \< Y, Z \> \rho X -\<\rho Y, Z\> X \\
+ \sum\nolimits_{i=1}^\nu \eta_i (2 \< J_iX, Y \> J_iZ + \< J_iZ, Y \> J_iX - \< J_iZ, X \> J_iY),
\end{multline}
for all $y \in \mathcal{U}$ and $X, Y, Z \in T_yM^n$. Moreover, if $n=8$, then the curvature tensor has the form
\eqref{eq:confcs} either with $\nu=3$ and $J_1J_2=\pm J_3$, or with $\nu=7$, for all $y \in \mathcal{U}$.
\end{lemma}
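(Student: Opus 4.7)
The plan is to extract the Clifford data canonically and smoothly from the spectral decomposition of the Weyl-Jacobi operator $W_X$, smooth the anticommuting almost Hermitian structures via Lemma~\ref{l:level}(3), and translate the resulting Clifford form of $W$ into \eqref{eq:confcs} for $R$ by absorbing the $\lambda_0$-term into the Schouten contribution $R-W$. Fix $x_0\in M'$ and a connected neighbourhood $\mathcal{U}_0$ of $x_0$ on which $N$ is constant. By Remark~\ref{rem:climpliesos}, on a unit $X\in T_yM^n$ the spectrum of $W_X$ lies in $\{0,\lambda_0(y),\lambda_0(y)+3\eta_i(y)\}$. Since $(y,X)\mapsto W_X$ is smooth and $N$ is locally constant, Rellich-Kato perturbation theory gives smooth dependence of each distinct eigenvalue on $y$ and of the corresponding spectral projection on $(y,X)$. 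Lemma~\ref{l:nvsnu}(i) together with the hypothesis $\nu\ne 8$ when $n=16$ yields $n-1-\nu>\nu$, except in the case $n=8,\ \nu\ge 4$ which I defer; hence $\lambda_0$ is the eigenvalue of strictly largest multiplicity and is canonically identified, $\nu(y)$ is locally constant, $\lambda_0:\mathcal{U}_0\to\br$ is smooth, and the subbundles $\mathcal{I}X=(\Ker(W_X-\lambda_0\id))^\perp$ and $\mathcal{J}X=\mathcal{I}X\cap X^\perp$ are smooth.

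\emph{Smoothing, diagonalisation, and passage to $R$.} Apply Lemma~\ref{l:level}(3) on $\mathcal{U}_0$ to obtain smooth anticommuting almost Hermitian structures $\tilde J_1,\dots,\tilde J_\nu$ on a neighbourhood $\mathcal{U}\subset\mathcal{U}_0$ of $x_0$ with $\Span(\tilde J_1X,\dots,\tilde J_\nu X)=\mathcal{J}X$ for every vector field $X$. Substituting these into \eqref{eq:cs}, using the orthogonal gauge that at every point relates them to the pointwise Clifford basis, expresses $W$ with a smooth symmetric matrix coefficient $\tilde\eta_{ij}(y)$ in place of $\eta_i\delta_{ij}$; the eigenvalues of $\tilde\eta$ are pointwise equal to the $\eta_\alpha$'s and their multiplicities are locally constant on $\mathcal{U}$ (matching the multiplicities of the nonzero eigenvalues of $W_X-\lambda_0\id$ on $\mathcal{J}X$), so a smooth orthogonal diagonalisation of $\tilde\eta$ exists. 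A corresponding further gauge change then yields smooth nowhere-zero $\eta_i$ and smooth anticommuting almost Hermitian structures realising \eqref{eq:cs} for $W$. Using the standard expression of $R-W$ in terms of the smooth Schouten tensor $L$ and the metric, and setting $\rho=L+\tfrac12\lambda_0\,\id$ (smooth and symmetric on $\mathcal{U}$), the $\lambda_0(\<X,Z\>Y-\<Y,Z\>X)$ term of \eqref{eq:cs} is absorbed and \eqref{eq:confcs} follows.

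\emph{The case $n=8$.} Lemma~\ref{l:formofR}(1), applied pointwise on $M'\cap M^8$, shows that each point is either in case~(a), where $W$ has a $\Cliff(3)$-structure with $J_1J_2=\pm J_3$, or in case~(b), where every Clifford structure for $W$ extends to $\Cliff(7)$. Both subsets are open: if $y_n\to x$ with each $y_n$ in case~(b), a convergent subsequence of the associated $\Cliff(7)$-data (extracted by compactness of the set of anticommuting almost Hermitian structures) gives a $\Cliff(7)$-structure at $x$, so $x$ is in case~(b); the argument for case~(a) is analogous. On the case~(a) component $\nu=3<4=n-1-\nu$, so the preceding paragraphs apply directly; the relation $\tilde J_1\tilde J_2=\pm\tilde J_3$ is automatic since three anticommuting almost Hermitian structures in dimension $8$ always satisfy $\tilde J_1\tilde J_2\tilde J_3=\mp\id$, with the sign locally constant by continuity. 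On the case~(b) component, $\mathcal{J}X=X^\perp$ is trivially smooth for every smooth $X$; Lemma~\ref{l:level}(3) then yields smooth $\tilde J_1,\dots,\tilde J_7$ on $\mathcal{U}$, and a smooth nowhere-zero choice of $\xi$ together with identity \eqref{eq:8sq} places $W$ in the $\Cliff(7)$-form with smooth nonzero $\eta_i$, the $-3\xi$ shift in $\lambda_0$ being absorbed into $\rho$.

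\emph{Main obstacle.} The delicate step is the canonical smooth identification of $\lambda_0$: the multiplicity inequality $n-1-\nu>\nu$ from Lemma~\ref{l:nvsnu}(i), together with the exclusion $\nu=8$ at $n=16$ and the separate treatment of the case $n=8,\ \nu\ge 4$ via the $\Cliff(7)$-extension (where $\mathcal{J}X=X^\perp$ is immediate), is precisely what makes this possible. A secondary subtlety is the smooth orthogonal diagonalisation of the gauge matrix $\tilde\eta$, which reduces to the classical smooth spectral theory of symmetric matrix-valued functions with locally constant eigenvalue multiplicities; a further point is the openness of the case~(a)/case~(b) dichotomy in $M^8$, which follows from the mutual exclusivity of Lemma~\ref{l:formofR}(1) and compactness of the space of anticommuting almost Hermitian structures.
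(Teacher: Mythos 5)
Your overall strategy mirrors the paper's (identify $\lambda_0$ by multiplicity via Lemma~\ref{l:nvsnu}, smooth the almost Hermitian structures via Lemma~\ref{l:level}(3), and absorb the $\lambda_0$-term into $\rho$). For $n\ne 8$ and for the $n=8$ case (a), your variant — applying Lemma~\ref{l:level}(3) to the full $\mathcal{J}X$, reading off a non-diagonal gauge matrix $\tilde\eta$, and smoothly rediagonalising it — is a valid alternative to the paper's device of applying Lemma~\ref{l:level}(3) to each eigendistribution of $W_{X|X^\perp}$ separately and then checking directly that the reconstructed $\tilde W$ has the same Jacobi operator as $W$. Your approach buys a somewhat more conceptual picture (a single gauge matrix), at the cost of invoking smooth diagonalisation of a symmetric matrix field; the paper's per-group approach avoids any diagonalisation step and settles the equality $\tilde W = W$ at once by matching spectra and eigenspaces of the Jacobi operators.

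There is, however, a genuine gap in the $n=8$, case~(b) (i.e.\ $\nu=7$) step. You apply Lemma~\ref{l:level}(3) to $\mathcal{J}X=X^\perp$ and claim the output $\tilde J_1,\dots,\tilde J_7$ can be substituted into \eqref{eq:cs}. But for $\nu=7$ the hypothesis of Lemma~\ref{l:level}(3) is vacuous ($\mathcal{J}X = X^\perp$ for any seven anticommuting almost Hermitian structures), and the proof of that assertion in the reduction cases $n=8,\ \nu=5,6,7$ begins by picking an arbitrary smooth almost Hermitian structure and completing it — the resulting $\tilde J_i$ are therefore not pointwise orthogonal linear combinations of the original $J_j$ (unlike the generic case $2\nu\le n$, where $\tilde J_\alpha=\sum_\beta a_{\alpha\beta}J_\beta$ is explicitly produced). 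Hence there is no reason the Jacobi operator $W_X$ is block-diagonal in the frame $\{\tilde J_iX\}$ with an $X$-independent coefficient matrix; your ``smooth symmetric matrix coefficient $\tilde\eta_{ij}(y)$'' is not well defined, and \eqref{eq:cs} does not follow. The paper handles precisely this by applying Lemma~\ref{l:level}(3) to each eigendistribution $\Span_{j:\lambda_0+3\eta_j=\mu_i}(J_jX)$ of $W_{X|X^\perp}$ separately — these distributions are smooth of constant dimension, and the resulting $\tilde J_j$ reproduce the eigenspaces of $W_X$ exactly, whence $\tilde W_X = W_X$. Your argument should be patched by doing the same, rather than smoothing the whole $\mathcal{J}X$ at once.

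Two smaller points. First, for the $n=8$ dichotomy you write ``both subsets are open'' but the convergent-subsequence argument you sketch proves closedness (which, since the two cases are mutually exclusive and exhaustive on a connected $M_\alpha$, is equivalent; still, the wording should match the argument), and in case~(b) the compactness extraction must be normalised in $\xi$ — the $\Cliff(7)$ data $\eta_i+\xi,\xi,\dots,\xi$ are not intrinsic, so one should fix $\xi$ on the sequence before passing to the limit. The paper's mechanism — the smooth field $\tilde J_1\tilde J_2\tilde J_3$, which is $\pm\id$ at $\Cliff(3)$ points and a symmetric trace-zero operator at $\Cliff(7)$ points, so $\Tr(\tilde J_1\tilde J_2\tilde J_3)\in\{\pm 8,0\}$ is continuous and locally constant — is cleaner and sidesteps the normalisation issue. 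Second, the identification $\mathcal{I}X=(\Ker(W_X-\lambda_0\id))^\perp$ is off by one when $\lambda_0=0$ (where $X$ itself falls into the kernel); working with $W_{X|X^\perp}$ as the paper does avoids this small glitch.
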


%Note that in general the set $M'$ may be disconnected, with different Clifford structures on the different
%connected components.

\begin{proof}
Let $X$ be a smooth unit vector field on $M^n$. As the Weyl tensor $W$ is a smooth Osserman algebraic
curvature tensor, the characteristic polynomial of $W_{X|X^\perp}$ (of the restriction of the Jacobi operator
$W_X$ to the subspace $X^\perp$) does not depend on $X$ and is a well-defined smooth map
$p: M^n \to \br_{n-1}[t], \; y \to p_y(t)$, where $\br_{n-1}[t]$ is the $(n-1)$-dimensional affine space of
polynomials of degree $n-1$ with the leading term $(-t)^{n-1}$. As all the roots of $p_y(t)$ are real and the number
of different roots is constant on every connected component of $M'$, the eigenvalues $\mu_0, \mu_1, \dots, \mu_l$
of $W_{X|X^\perp}$ are smooth functions and their multiplicities $m_0, m_1, \dots, m_l$ are constant,
on every connected component of $M'$ (we chose the labelling in such a way that $m_0=\max(m_0,m_1,\dots, m_l)$.

First consider the case $n \ne 8$. The Weyl tensor has a Clifford structure given by \eqref{eq:cs} at every point
of $M'$. By Lemma~\ref{l:nvsnu}, for $n >4, \, n \ne 8,16, \; n-1-\nu > \nu$, for any Clifford structure on $\Rn$. By
\eqref{eq:radon}, for $n=16, \; \nu\le 8$, so by the assumption, the inequality $n-1-\nu > \nu$ also holds for $n=16$.
Then the biggest multiplicity of an eigenvalue of $W_{X|X^\perp}$ is $n-1-\nu$ (see Remark~\ref{rem:climpliesos}).
So the number $\nu=n-1-m_0$ is constant and the function $\la_0=\mu_0$ is smooth on every connected
component of $M'$. Moreover, for every smooth unit vector field $X$ on $M'$ and every $i=1, \dots, l$, the
$\mu_i$-eigendistribution of $W_{X|X^\perp}$ is $\Span_{j:\la_0+3\eta_j=\mu_i}(J_jX)$. As $\la_0$ and $\mu_i$ are
smooth functions on every connected component of $M'$, $\eta_j$ also is. Moreover, on every connected
component of $M'$, every distribution $\Span_{j:\la_0+3\eta_j=\mu_i}(J_jX)$ is smooth and has a constant
dimension $m_i$, for any nowhere vanishing smooth vector field $X$. By assertion~3 of
Lemma~\ref{l:level}, there exists a neighbourhood $\mU_i(x)$ and smooth anticommuting almost Hermitian
structures $\tilde J_j$ (for the $j$'s such that $\la_0+3\eta_j=\mu_i$) on $\mU_i(x)$ such that
$\Span_{j:\la_0+3\eta_j=\mu_i}(J_jX)=\Span_{j:\la_0+3\eta_j=\mu_i}(\tilde J_jX)$. Let $\tilde W$ be the algebraic
curvature tensor on $\mU= \cap_{i=1}^l\mU_i(x)$ with the Clifford structure
$\Cliff(\nu; \tilde J_1,\dots, \tilde J_\nu; \la_0, \eta_1, \dots , \eta_\nu)$. Then $\nu=n-1-m_0$ is constant and all
the $\tilde J_i, \eta_i$ and $\la_0$ are smooth on $\mU$. Moreover, for every unit vector field $X$ on
$\mU$, the Jacobi operators $\tilde W_X$ and $W_X$ have the same eigenvalues and eigenvectors by construction, hence
$\tilde W_X=W_X$, which implies $\tilde W=W$.

Now consider the case $n=8$. By Lemma~\ref{l:formofR}, at every point $x \in M'$, the Weyl tensor either has a
$\Cliff(3)$-structure, with $J_1J_2=J_3$, or a $\Cliff(7)$-structure (but not both). As on every connected component
$M_\a$ of $M'$, the number and the multiplicities of the eigenvalues of the operator $W_{X|X^\perp}$, $X \ne 0$, are
constant, it follows from Remark~\ref{rem:climpliesos} that the only case when $M_\a$ may potentially contain the
points of the both kinds is when one of the eigenvalues of $W_{X|X^\perp}, \; X \ne 0$, on $M_\a$ has multiplicity $4$
and the Clifford structure at every point $x \in M_\a$ is either $\Cliff(3;J_1, J_2, J_3;\la_0,\eta_1,\eta_2,\eta_3)$
with $J_1J_2=J_3$, or $\Cliff(7; J_1,\dots, J_7;\la_0-3\xi, \eta_1+\xi, \eta_2+\xi, \eta_3+\xi, \xi, \xi, \xi, \xi)$,
where $\eta_1, \eta_2, \eta_3 \ne 0$ (some of them can be equal) and $\xi \ne -\eta_i, 0$. The eigenvalues of
$W_{X|X^\perp}, \; \|X\|=1$, at every point $x \in M_\a$ are $\la_0$, of multiplicity $4$, and $\la_0+3\eta_i$.
Let $X$ be an arbitrary nowhere vanishing smooth vector field on a neighbourhood $\mU \subset M_\a$ of a point
$x \in M_\a$. Then the four-dimensional eigendistribution of the operator $W_{X|X^\perp}$ corresponding to the
eigenvalue of multiplicity $4$ is smooth, therefore its orthogonal complement, the distribution
$\Span(J_1X, J_2X, J_3X)$ is also smooth. By assertion~3 of Lemma~\ref{l:level}, there exist
smooth anticommuting almost Hermitian structures $\tilde J_1, \tilde J_2, \tilde J_3$ on (a possibly
smaller) neighbourhood $\mU$ such that $\Span(\tilde J_1X, \tilde J_2X, \tilde J_3X)=\Span(J_1X, J_2X, J_3X)$.
By assertion~1 of Lemma~\ref{l:level}, every $\tilde J_i$ is a linear combination of the $J_j$'s:
$\tilde J_i=\sum_{j=1}^3 a_{ij}J_j$, and moreover, the matrix $(a_{ij})$ must be orthogonal, as the $\tilde J_i$'s are
anticommuting almost Hermitian structures. It follows that $\tilde J_1\tilde J_2\tilde J_3=\pm J_1J_2J_3$. The 
operator on the left-hand side is smooth on $\mU$, the one on the right-hand side is $\pm \id_{\Ro}$, at the points 
where the Clifford structure is $\Cliff(3)$ with $J_1J_2=J_3$, and is symmetric with trace zero, at the points where
the Clifford structure is $\Cliff(7)$ (which follows from the identity $J_4(J_1J_2J_3)J_4=J_1J_2J_3$). Therefore
all the point of $\mU$ either have a $\Cliff(3)$-structure with $J_1J_2=J_3$, or a
$\Cliff(7)$-structure. In the both cases, the Clifford structure for $W$ can be taken smooth:
in the first case, we follow the arguments as in the first part of the proof, as $\nu < n-1-\nu$; in the second one,
we apply assertion~3 of Lemma~\ref{l:level} to every eigendistribution of $W_{X|X^\perp}$.

Thus for any $x \in M'$, the Weyl tensor on a neighbourhood $\mU=\mU(x)$ has the form \eqref{eq:cs}, with a constant
$\nu$ and smooth $\la_0, \eta_i$ and $J_i$. Then the curvature tensor has
the form \eqref{eq:confcs}, with the operator $\rho$ given by
$\rho=\frac{1}{n-2}\mathrm{Ric}+(\frac12 \la_0-\frac{\mathrm{scal}}{2(n-1)(n-2)})\id$,
where $\mathrm{Ric}$ is the Ricci operator and $\mathrm{scal}$ is the scalar curvature. As $\la_0$ is a smooth
function, the operator field $\rho$ is also smooth.
\end{proof}

\begin{remark}\label{rem:smoothcl}
In effect, the proof shows that if an algebraic curvature tensor $\mathcal{R}$ field has a Clifford structure at every
point of a Riemannian manifold, (and $\nu \ne 8$ when $n =16$) then it has a Clifford structure of the same class of
differentiability as $\mathcal{R}$ on a neighbourhood of every generic point of the manifold.
\end{remark}

\begin{remark} \label{rem:n8nu7}
As it follows from assertion~1 of Lemma~\ref{l:formofR} (in fact, from equation \eqref{eq:8sq}), in the case
$n=8, \; \nu=7$ we can replace in \eqref{eq:confcs} $\rho$ by $\rho- \frac32 f \, \id$ and $\eta_i$ by $\eta_i+f$,
without changing $R$, where $f$ is an arbitrary smooth function on $\mathcal{U}$ (if we want the resulting 
Clifford structure to be $\Cliff(7)$, we additionally require that $\eta_i+f$ is nowhere zero).
\end{remark}

%remark. introduce a wedge b. then R=

Let $x\in M'$ and let $\mathcal{U}=\mathcal{U}(x)$ be the neighbourhood of $x$ defined in Lemma~\ref{l:locc1}.
By the second Bianchi identity, $(\n_UR)(X,Y) Y + (\n_YR)(U,X) Y+(\n_XR)(Y,U) Y = 0$. Substituting $R$ from
\eqref{eq:confcs} and using the fact that the operators $J_i$'s and their covariant derivatives are skew-symmetric and
the operator $\rho$ and its covariant derivatives are symmetric we get:
\begin{equation} \label{eq:confBZY}
\begin{split}
&\<X,Y\> ((\n_U \rho)Y - (\n_Y \rho)U)+\|Y\|^2 ((\n_X \rho)U-(\n_U \rho) X) +\<U,Y\> ((\n_Y \rho)X-(\n_X \rho) Y)\\
&+\<(\n_Y \rho)U-(\n_U \rho)Y, Y\> X +\<(\n_X \rho)Y-(\n_Y \rho)X, Y\> U +\<(\n_U \rho)X-(\n_X \rho)U, Y\> Y \\
&+ \sum\nolimits_{i=1}^\nu 3(X(\eta_i) \< J_iY, U \> - U(\eta_i) \< J_iY, X \>)J_iY  \\
&+ \sum\nolimits_{i=1}^\nu Y(\eta_i) (2 \< J_iU, X \> J_iY + \< J_iY, X \> J_iU - \< J_iY, U \> J_iX) \\
&+ \sum\nolimits_{i=1}^\nu \eta_i
\bigl((3 \< (\n_UJ_i)X, Y \> + 3 \< (\n_XJ_i)Y, U \> + 2 \< (\n_YJ_i)U, X \>)J_iY\\
&+3 \< J_iX, Y \> (\n_UJ_i) Y + 3 \< J_iY, U \> (\n_XJ_i) Y + 2 \< J_iU, X \> (\n_YJ_i) Y\\
&+ \< (\n_YJ_i) Y, X \> J_iU + \< J_i Y, X \> (\n_YJ_i) U
- \< (\n_YJ_i) Y, U \> J_iX - \< J_i Y, U \> (\n_YJ_i) X \bigr)=0.
\end{split}
\end{equation}
Taking the inner product of \eqref{eq:confBZY} with $X$ and assuming $X, Y$ and $U$ to be orthogonal we obtain
\begin{equation} \label{eq:confBZYX}
\begin{split}
&\|X\|^2 \<Q(Y),U\> +\|Y\|^2 \<Q(X),U\>\\
+& \sum\nolimits_{i=1}^\nu 3(X(\eta_i) \< J_iY, U \> - Y(\eta_i) \<J_iX,U\> - U(\eta_i) \<J_iY,X\>)\<J_iY,X\> \\
+& \sum\nolimits_{i=1}^\nu 3 \eta_i
\bigl((2 \< (\n_UJ_i)X, Y \> + \< (\n_XJ_i)Y, U \> + \< (\n_YJ_i)U, X \>)\<J_iY,X\>\\
& \hphantom{\sum\nolimits_{i=1}^\nu \la_i (}
 - \< J_iY, U\> \<(\n_XJ_i)X, Y\> - \< J_iX, U\> \<(\n_YJ_i) Y,X\>\bigr)=0,
\end{split}
\end{equation}
where $Q: \Rn \to \Rn$ is the quadratic map defined by
\begin{equation}\label{eq:defQ}
\<Q(X), U\>=\<(\n_X \rho)U-(\n_U \rho) X,X\>.
\end{equation}
Note that $\<Q(X), X\> = 0$.

{
\begin{lemma}\label{l:nablarho}
In the assumptions of Lemma~\ref{l:locc1}, let $x \in M'$ and let $\mU$ be the corresponding neigbourhood of $x$.
Suppose that if $n = 8$, then $\nu=3$ and $J_1J_2=J_3$ on $\mU$, and if $n=16$, then $\nu \le 4$.
For every point $y \in \mU$, identify $T_yM^n$ with the Euclidean space $\Rn$ via a linear isometry.
Then
\begin{enumerate}[\rm (i)]
  \item
there exist vectors $m_i, b_{ij} \in \Rn, \; i,j =1, \dots, \nu$, such that for all $X, Y, U \in \Rn$, and all
$i =1, \dots, \nu$,
\begin{subequations}\label{eq:lnablarhoi}
\begin{gather} \label{eq:Qsum}
Q(Y)=3 \sum\nolimits_{k=1}^\nu \<m_k,Y\>J_kY, \\
\label{eq:nXJX}
    (\n_X J_i)X = \eta_i^{-1}(\|X\|^2 m_i-\<m_i,X\>X)+\sum\nolimits_{j=1}^\nu \<b_{ij},X\>J_jX,\\
\label{eq:bijbji}
    b_{ij}+b_{ji}=\eta_i^{-1}J_jm_i+\eta_j^{-1}J_im_j, \\
\label{eq:neta}
    \nabla \eta_i = 2 J_im_i,\\
\label{eq:JiJkY}
\sum\nolimits_{j \ne i}(\<\eta_i b_{ij} + \eta_j b_{ji}, J_iY\> J_jY+\<\eta_i b_{ij} + \eta_j b_{ji},Y\> J_iJ_jY)=0.
\end{gather}
\end{subequations}

  \item
  the following equations hold:
\begin{subequations}\label{eq:lnablarhoii}
\begin{gather}
\label{eq:skewrho}
(\n_Y \rho)U-(\n_U \rho)Y=\sum\nolimits_{i=1}^\nu (2\<J_iY,U\>m_i - \<m_i,Y\>J_iU+\<m_i,U\>J_iY),\\
\label{eq:bijne}
b_{ij} (3-\eta_i \eta_j^{-1})+b_{ji} (3-\eta_j \eta_i^{-1})=0, \quad i \ne j,\\
\label{eq:allequal}
J_i m_i = \eta_i p, \quad i=1, \ldots, \nu, \quad \text{for some $p \in \Rn$}.
\end{gather}
\end{subequations}
\end{enumerate}
\end{lemma}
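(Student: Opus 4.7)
The plan is to extract all six identities of the lemma from the second Bianchi identity, which has already been written out in two forms: the full version \eqref{eq:confBZY} and its projection onto $X$ (assuming $X,Y,U$ mutually orthogonal) \eqref{eq:confBZYX}. The overall strategy splits into two stages: first, show that certain polynomial maps built from $Q$ and from $(\nabla_XJ_i)X$ take values in $\cJ X$ or $\cI X$, so that assertion~1 of Lemma~\ref{l:level} expresses them in the desired form \eqref{eq:Qsum}–\eqref{eq:nXJX}; second, substitute these expressions back and separate tensorial components to produce the remaining structural identities, invoking assertion~2 of Lemma~\ref{l:level} at the end to obtain \eqref{eq:bijne} and \eqref{eq:allequal}.

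For the first stage, I would exploit \eqref{eq:confBZYX} by choosing $U$ in the orthogonal complement $(\cI X)^\perp$, which kills every term containing a factor $\<J_iX,U\>$. What remains is a linear expression in $U$ whose coefficients involve $Q$ evaluated at $X$ and $Y$, the gradients $X(\eta_i)$, and the components $\<(\nabla J_i)\cdot,\cdot\>$. Freezing $Y$ and varying $U \perp \cI X$ shows that a specific polynomial map of $X$ (containing $Q(X)$) lies in $\cJ Y$ up to corrections that vanish by further specialization. Running the same argument with the roles of $Y$ interchanged and using homogeneity forces $Q(X) \in \cJ X$ and $(\nabla_XJ_i)X \in \cI X$. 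Assertion~1 of Lemma~\ref{l:level} applied to the homogeneous polynomial map $Q$ (of degree two) then yields $Q(X)=\sum_k c_k(X)J_kX$ with $c_k$ linear in $X$, so $c_k(X)=3\<m_k,X\>$ for some vectors $m_k$, proving \eqref{eq:Qsum}. Applied to $(\nabla_XJ_i)X$ (also degree two), it yields \eqref{eq:nXJX} up to determining that the $X$-coefficient has the specific form $\eta_i^{-1}(\|X\|^2 m_i - \<m_i,X\>X)$; this identification comes from matching cross-terms between the $Q$-expansion and the $(\nabla J_i)$-expansion in the reduced Bianchi equation (noting $\eta_i\ne 0$ on $\mU$).

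For the second stage, I would substitute \eqref{eq:Qsum} and \eqref{eq:nXJX} back into the full identity \eqref{eq:confBZY} and polarize. The polarization of \eqref{eq:nXJX} combined with the skew-symmetry of $\nabla J_i$ (which follows from $J_i^2=-\id$) immediately produces the symmetrization identity \eqref{eq:bijbji}. Matching the scalar terms that multiply $J_iY$ (collecting $Y(\eta_i)$ on one side and $\<2J_im_i,Y\>$ on the other) gives \eqref{eq:neta}. The remaining terms that are not yet accounted for form exactly an equation of the type $\sum_{j\ne i}(\<a_j,J_iY\>J_jY+\<a_j,Y\>J_iJ_jY)=0$ with $a_j=\eta_ib_{ij}+\eta_jb_{ji}$, which is \eqref{eq:JiJkY}. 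Identity \eqref{eq:skewrho} is then read off directly from the definition \eqref{eq:defQ} of $Q$ by polarizing in $X$ and using \eqref{eq:Qsum}. Finally, \eqref{eq:bijne} and \eqref{eq:allequal} follow from \eqref{eq:JiJkY} by applying assertion~2 of Lemma~\ref{l:level}: in the generic case the lemma forces $\eta_ib_{ij}+\eta_jb_{ji}=0$ (combined with the symmetric version \eqref{eq:bijbji} this gives \eqref{eq:bijne}), while the exceptional case $\nu=3$ with $J_1J_2=\pm J_3$ forces $\eta_ib_{ij}+\eta_jb_{ji}=J_jv_i$ for a common direction, leading to \eqref{eq:allequal} via \eqref{eq:neta}.

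The main obstacle is bookkeeping in the substitution step: equation \eqref{eq:confBZY} contains roughly a dozen distinct tensorial pieces, and after plugging in the forms of $Q$, $(\nabla_XJ_i)X$, and $\nabla\eta_i$, one must separate coefficients of $X$, $Y$, $U$, $J_iX$, $J_iY$, $J_iU$ and verify that each vanishes independently. This separation uses that the vectors $\{Y, J_1Y, \ldots, J_\nu Y\}$ (and similarly for $X$) are linearly independent and have many orthogonal directions to vary $U$ through, which is guaranteed precisely by the hypotheses $n>4$, $(n,\nu)\ne(8, \text{large})$, and $\nu\le 4$ when $n=16$ — these feed into Lemma~\ref{l:nvsnu}(ii) and allow assertion~2 of Lemma~\ref{l:level} to be applied cleanly. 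A secondary difficulty is that the case $n=8, \nu=3, J_1J_2=J_3$ produces the exceptional branch of Lemma~\ref{l:level}.2 and must be treated separately to confirm that \eqref{eq:allequal} still holds (with the vector $p$ coming from the common value of $\eta_i^{-1}J_im_i$).
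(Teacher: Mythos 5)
There is a genuine gap in your derivation of \eqref{eq:bijne} and \eqref{eq:allequal}. You propose to obtain \eqref{eq:bijne} from \eqref{eq:JiJkY} via assertion~2 of Lemma~\ref{l:level} together with \eqref{eq:bijbji}, but this does not work algebraically: in the generic branch of Lemma~\ref{l:level}.2 you get $\eta_ib_{ij}+\eta_jb_{ji}=0$, and combining this with \eqref{eq:bijbji} gives only $b_{ij}=-\eta_j\eta_i^{-1}b_{ji}$ together with a relation for $b_{ij}+b_{ji}$, from which $b_{ij}(3-\eta_i\eta_j^{-1})+b_{ji}(3-\eta_j\eta_i^{-1})=0$ does \emph{not} follow unless one already knows $b_{ji}=0$ or $\eta_i=\eta_j$. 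The paper obtains \eqref{eq:bijne} from a separate specialization of the Bianchi identity: substitute $X\perp\cI Y$, $U=J_kY$ into \eqref{eq:confBZY}, use the already-established identities of assertion~(i) together with \eqref{eq:skewrho} to solve for $(\n_YJ_k)X\bmod\cI Y$ (equation \eqref{eq:nyjx}), and then pair with $J_sX$ over a spanning set $S_{ks}$. Without this step you have no access to \eqref{eq:bijne}, and consequently no way to pass from $\eta_ib_{ij}+\eta_jb_{ji}=0$ to $b_{ij}+b_{ji}=0$, which is what ultimately yields \eqref{eq:allequal}. Relatedly, your remark that the exceptional branch of Lemma~\ref{l:level}.2 ($\nu=3$, $J_1J_2=\pm J_3$, $\eta_kb_{ki}+\eta_ib_{ik}=J_jv$ with $v\ne 0$) ``leads to \eqref{eq:allequal}'' is backwards: the paper devotes a long analysis (conformal normalization of $\eta_1\eta_2\eta_3$, the Weierstrass-function parametrization, the explicit form \eqref{eq:nyjx3w} of $\n_YJ_k$, the Ricci identity, and a trace computation) precisely to show that this branch produces a \emph{contradiction}, so that only the case $v=0$ survives.

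The other major omission is that your first stage silently assumes the generic regime. The step ``varying $U\perp\cI X$ to force $Q(X)\in\cJ X$'' is not immediate once $n\le 3\nu+3$: the paper singles out $n=3\nu+3$ (i.e.\ $(n,\nu)=(12,3)$ or $(24,7)$) and runs a separate great-sphere and matrix argument to exclude a nontrivial quadratic form, ending with the conclusion $J_iJ_jU\in\cI U$, which in turn rules out $\nu>3$. Moreover, the lemma's hypotheses explicitly allow $n=6,\nu=1$; $n=12,\nu=3,J_1J_2=\pm J_3$; and $n=8,\nu=3,J_1J_2=J_3$, and in these ``exceptional'' cases the argument is structurally different: one needs the $J^2$-property $\cI\cI X=\cI X$, the auxiliary decomposition $Q(U)=T(U)+3\sum\<m_k,U\>J_kU$ with $T(U)\perp\cI U$, equation \eqref{eq:nYY83withT}, and (for $\nu=3$) the introduction of an auxiliary vector $V$ that must eventually be shown to vanish. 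Your plan never addresses these cases, so even if the generic case were filled in completely, the lemma would not be proved under the stated hypotheses.
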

\begin{proof}
(i) We split the proof of this assertions into the two cases:
the \emph{exceptional case}, when either $n=6,\, \nu=1$, or $n=12, \, \nu=3, \; J_1J_2=\pm J_3$, or
$n = 8, \; \nu=3, \; J_1J_2=J_3$, and the \emph{generic case}: all the other Clifford structures considered in
the lemma.

\smallskip

\underline{Generic case.}
From \eqref{eq:confBZYX} we obtain
\begin{equation} \label{eq:nrho}
\|X\|^{-2} \<Q(X), U\> +\|Y\|^{-2} \<Q(Y), U\>=0, \qquad \text{for all } X \perp \cI Y, \;
X, Y \perp \cI U, \; X, Y, U \ne 0.
\end{equation}
We want to show that $\<Q(X), U\> = 0$, for all $X \perp \cI U$. This is immediate when $n > 3\nu+3$. Indeed, for
any $U \ne 0$ and any unit $X\perp \cI U, \quad \mathrm{codim}(\cI U + \cI X) > \nu +1$,
so we can choose unit vectors $Y_1, Y_2 \perp \cI U + \cI X$ such that $Y_1 \perp \cI Y_2$. Then \eqref{eq:nrho}
implies that $\<Q(X), U\>=-\<Q(Y_1), U\>=\<Q(Y_2), U\>=-\<Q(X), U\>$.

Consider the case $n \le 3\nu+3$. By assertion~(i) of Lemma~\ref{l:nvsnu}, this could only happen when
$n=12$, $\nu=3$ or $n=24, \, \nu = 7$ (for the pairs $(n,\nu)$ belonging to the generic case), and in the both cases
$n = 3\nu+3$.
Choose and fix an arbitrary $U \ne 0$ and consider the quadratic form $q(X) = \<Q(X), U\>$ defined
on the $(2\nu+2)$-dimensional space $L=(\cI U)^\perp$. Assume $q \ne 0$. By \eqref{eq:nrho}, the restriction of $q$ to
the unit sphere of $L$ is not a constant, so it attains its maximum (respectively minimum) on a great sphere $S_1$
(respectively $S_2$). The subspaces $L_1$ and $L_2$ defined by $S_1$ and $S_2$ are orthogonal. Moreover, by
\eqref{eq:nrho}, $L_2 \supset (\cI X)^\perp \cap L$, for any nonzero $X \in L_1$, which implies that
$\dim L_2 \ge \nu+1$. Similarly $\dim L_1 \ge \nu+1$, so, as $L_1 \perp L_2, \quad \dim L_1=\dim L_2 = \nu+1$, and
$L=L_1 \oplus L_2$. It follows that for some $c > 0, \quad q(X)=c(\|\pi_1X\|^2-\|\pi_2X\|^2)$, where
$\pi_i:L\to L_i$ is the orthogonal projection. Moreover,
$L_2=(\cI X)^\perp \cap L$, for all nonzero $X \in L_1$, which means that the subspace $L_1=L_2^\perp \cap L$
(and similarly $L_2$) is $\pi  \cI$-invariant, where $\pi: \Rn \to L$ is the orthogonal projection, and even more:
$\pi  \cI X = L_\a$, for every nonzero $X \in L_\a, \; \a=1,2$, by the dimension count. Let
$X=X_1+X_2,\, Y=Y_1+Y_2 \in L$, where $X_\a=\pi_\a X,\; Y_\a=\pi_\a Y$. The condition $Y \perp \cI X$ is equivalent to
$\<X_1, Y_1\> + \<X_2, Y_2\>= \<\pi  J_i X_1, Y_1\> + \<\pi  J_i X_2, Y_2\>=0$, for all $i=1, \ldots \nu$.
Take arbitrary orthonormal bases for $L_1$ and for $L_2$ and denote $M_\a(X_\a), \; \a=1,2$, the
$(\nu+1)\times(\nu+1)$-matrix whose columns relative to the chosen basis for $L_\a$ are
$X_\a, \pi  J_1 X_\a, \dots, \pi  J_\nu X_\a$. Then $Y \perp \cI X$ if and only if
$M_1(X_1)^tY_1=-M_2(X_2)^tY_2$. Since for $\a=1,2$, and any nonzero $X_\a \in L_\a$, the columns of $M_\a(X_\a)$ span
$L_\a$, we obtain $Y_2= -(M_2(X_2)^t)^{-1}M_1(X_1)^tY_1$, for any $X_2 \ne 0$. Then, as
$q(X)=c(\|X_1\|^2-\|X_2\|^2), \; q(Y)=c(\|Y_1\|^2-\|Y_2\|^2)$, equation \eqref{eq:nrho} implies
$\|Y_1\|^2\|X_1\|^2-\|Y_2\|^2\|X_2\|^2=0$, so $\|Y_1\|^2\|X_1\|^2-\|(M_2(X_2)^t)^{-1}M_1(X_1)^tY_1\|^2\|X_2\|^2=0$,
for any $X_1, Y_1 \in L_1$ and any nonzero $X_2 \in L_2$. It follows that
$\|X_1\|^2 (M_1(X_1)^tM_1(X_1))^{-1}=\|X_2\|^2 (M_2(X_2)^tM_2(X_2))^{-1}$, for any nonzero $X_\a \in L_\a$. Thus for
some positive definite symmetric 
$(\nu+1)\times(\nu+1)$-matrix $T$, we have $M_\a(X_\a)^tM_\a(X_\a)=\|X_\a\|^2 T$, for all
$X_\a \in L_\a, \; \a=1,2$. Then for any $X = X_1+X_2 \in L,\; X_\a \in L_\a$, and any $i=1, \dots, \nu$, %we have
$\|\pi  J_i X\|^2= \|\pi  J_i X_1\|^2+ \|\pi  J_i X_2\|^2=(M_1(X_1)^tM_1(X_1)+M_2(X_2)^tM_2(X_2))_{ii}=
T_{ii}(\|X_1\|^2+ \|X_2\|^2)=T_{ii}\|X\|^2$. On the other hand, for any
$X \in L, \; \pi  J_i X=J_i X-\|U\|^{-2}\sum_{j=1}^\nu\<J_iX,J_jU\>J_jU$, so
$\|\pi  J_i X\|^2= \|X\|^2-\|U\|^{-2}\sum_{j=1}^\nu\<J_iX,J_jU\>^2$. It follows that
$\|X\|^2\|U\|^2(1-T_{ii})= \sum_{j=1}^\nu\<J_iX,J_jU\>^2=\sum_{j=1}^\nu\<X,J_iJ_jU\>^2$, for an arbitrary $X \in L$.
As $\dim L = 2\nu+2 > \nu$, we can choose a nonzero $X \in L$ orthogonal to the $\nu$ vectors
$J_iJ_jU, \; j=1, \ldots, \nu$. This implies $T_{ii}=1$, so $X \perp J_iJ_jU$, for all $i, j = 1, \ldots, \nu$ and all
$X \in L = (\cI U)^\perp$. Therefore $J_iJ_jU \in \cI U$, for all $i, j = 1, \ldots, \nu$ and all $U \in \Rn$
for which the quadratic form $q(X) = \<Q(X), U\>$ defined on $(\cI U)^\perp$ is nonzero. If this is true for at least
one $U$, then this is true for a dense subset of $\Rn$, which implies that $J_iJ_jU \in \cI U$, for all
$i, j = 1, \ldots, \nu$ and all $U \in \Rn$. Then by assertion~1 of Lemma~\ref{l:level}, for
$i \ne j, \; J_iJ_jU=\sum_{k=1}^\nu a_{ijk} J_kU$, for some constants $a_{ijk}$, which implies that
$\<J_kJ_iJ_jU, U\> = a_{ijk}\|U\|^2$, so for all the triples of pairwise distinct $i,j,k$, the symmetric operator
$J_kJ_iJ_j$ on $\Rn$ is a multiple of the identity. This is impossible when $\nu > 3$ (as for $l \ne i,j,k$, the
operator $J_lJ_kJ_iJ_j$ must be orthogonal and symmetric). The only remaining cases are $n=12, \; \nu=3$, with
$J_1J_2J_3 = \pm \id$, and $n=6, \nu =1$, which are considered under the exceptional case below.

Thus  $\<Q(X),U\>=0$, for $X \perp \cI U$, so $Q(X) \in \cI X$, for all $X \in \Rn$. By assertion~1 of
Lemma~\ref{l:level} (and the fact that $\<Q(X),X\>=0$), this implies equation \eqref{eq:Qsum},
with some vectors $m_i \in \Rn$.

To prove \eqref{eq:nXJX} and \eqref{eq:bijbji}, we first show that for an arbitrary $X \ne 0$, there is a dense
subset of the $Y$'s in $(\cI X)^\perp$ such that $\cJ X \cap \cJ Y = 0$. This follows from the dimension count
(compare to \cite[Lemma~3.2 (1)]{Nhjm}). For $X \ne 0$, define the cone
$\mathcal{C}X=\{J_uJ_vX \, : \, u, v \in \br^\nu\}$ (see \eqref{eq:ortmult}). As
$\dim \mathcal{C}X \le 2\nu-1 < n-(\nu+1)= \dim (\cI X)^\perp$ (the inequality in the middle follows from
assertion~(i) of Lemma~\ref{l:nvsnu}), the complement to $\mathcal{C}X$ is dense in $(\cI X)^\perp$. This complement
is the required subset, as the condition $Y \notin \mathcal{C}X$ is equivalent to $\cJ X \cap \cJ Y = 0$. Substituting
such $X, Y$ into \eqref{eq:confBZYX} we obtain by \eqref{eq:Qsum}:
\begin{equation*}
\sum\nolimits_{i=1}^\nu (\|X\|^2\<m_i,Y\>-\eta_i \<(\n_XJ_i)X, Y\>) J_iY
+\sum\nolimits_{i=1}^\nu (\|Y\|^2\<m_i,X\>-\eta_i \<(\n_YJ_i)Y, X\>) J_iX =0.
\end{equation*}
As $\cJ X \cap \cJ Y = 0$, all the coefficients vanish, so
$\|X\|^2\<m_i,Y\>-\eta_i \<(\n_XJ_i)X, Y\>=0$, for all $X \in \Rn$, all $i=1,\dots, \nu$, and all $Y$ from a
dense subset of $(\cI X)^\perp$, which implies that $(\n_XJ_i)X -\eta_i^{-1}\|X\|^2 m_i \in \cI X$, for all
$X \in \Rn$. Equation \eqref{eq:nXJX} then follows from
assertion~1 of Lemma~\ref{l:level}. Equation \eqref{eq:bijbji} follows from \eqref{eq:nXJX} and the fact that
$\<(\n_X J_i)X,J_jX\>+ \<(\n_X J_j)X,J_iX\>=0$.

To prove \eqref{eq:neta} and \eqref{eq:JiJkY}, substitute $X=J_kY, \; U \perp X, Y$ into \eqref{eq:confBZYX}.
Consider the first term in the second summation. As $\<J_iY, X\> = \|Y\|^2 \K_{ik}$, that term equals
$3 \eta_k(2 \< (\n_UJ_k)X, Y \> + \< (\n_XJ_k)Y, U \> + \< (\n_YJ_k)U, X \>)\|Y\|^2$. As $J_k$
is orthogonal and skew-symmetric,
$\< (\n_UJ_k)X, Y \>=\< (\n_UJ_k)J_kY, Y \>=-\< J_k(\n_UJ_k)Y, Y \>=\<(\n_UJ_k)Y, J_kY \>=0$.
Next, $\<(\n_YJ_k)U, X \>=-\<(\n_YJ_k)J_kY, U\>=\<J_k(\n_YJ_k)Y, U\>$ $=
\<(\eta_k^{-1}\|Y\|^2 J_km_k+\sum\nolimits_{j=1}^\nu \<b_{kj},Y\>J_kJ_jY, U\>$ by \eqref{eq:nXJX}. Similarly, as
$Y=-J_kX$, it follows from \eqref{eq:nXJX} that $\< (\n_XJ_k)Y, U \>=\< J_k(\n_XJ_k)X, U \>=
\<J_k(\eta_k^{-1}(\|X\|^2 m_k-\<m_k,X\>X)+\sum\nolimits_{j=1}^\nu \<b_{kj},X\>J_jX),U\>=
\<\eta_k^{-1}\|Y\|^2 J_km_k+\sum\nolimits_{j \ne k} \<b_{kj},J_kY\>J_jY-\<b_{kk},J_kY\>J_kY,U\>$.
Substituting this into \eqref{eq:confBZYX} and using \eqref{eq:Qsum} and \eqref{eq:nXJX}
we obtain after simplification:
\begin{equation} \label{eq:XJkY}
\|Y\|^2 \<2J_km_k-U(\eta_k))+
 \sum\nolimits_{j=1}^\nu \<\eta_k b_{kj} + \eta_j b_{jk}, \<J_jY,U\>J_kY +\<J_kJ_jY,U\>Y\> =0.
\end{equation}
By \cite[Lemma~3.2(3)]{Nhjm}, for all $U \in \Rn$, we can find a nonzero $Y$ such that $U \perp \cJ Y + \cJ J_kY$.
Substituting such a $Y$ into \eqref{eq:XJkY} proves \eqref{eq:neta}. Then \eqref{eq:XJkY} simplifies to
\eqref{eq:JiJkY}.

\smallskip

\underline{Exceptional case} (either $n=6,\, \nu=1$, or $n=12, \, \nu=3, \; J_1J_2=\pm J_3$, or
$n = 8, \; \nu=3, \; J_1J_2=J_3$).

In all these cases, the Clifford structure has the following ``$J^2$-property": for every
$X \in \Rn, \; \cI \cI X = \cJ \cI X = \cI X$. In particular, if $Y \perp \cI X$, then $\cI Y \perp \cI X$.

Substitute $X=J_kU$ and $Y \perp \cI X= \cI U$ to \eqref{eq:confBZY} and take the inner product of the resulting
equation with $J_kY$. Using the fact that $\<(\n_YJ_k)U,J_kU \>=\<(\n_YJ_k)Y,J_kY \>=0$ and the $J^2$-property we get
\begin{equation*}
-J_k((\n_{J_kU} \rho)U-(\n_U \rho) J_kU) + 2\|U\|^2 \n \eta_k + 3\eta_k ( (\n_UJ_k)J_kU - (\n_{J_kU}J_k)U) \in \cI U.
\end{equation*}
The expression $F(U)$ on the left-hand side is a quadratic map from $\Rn$ to itself. By assertion~1 of
Lemma~\ref{l:level}, $F(U)$ is a linear combination of $U, J_1U, \dots, J_\nu U$ whose coefficients are
linear forms of $U$. In particular, the cubic polynomial $\<F(U),J_kU\>$ must be divisible by $\|U\|^2$.
As $J_k$ is orthogonal and skew-symmetric, $\<(\n_UJ_k)J_kU - (\n_{J_kU}J_k)U, J_kU\> =0$, so there exists a vector
$m_k \in \Rn$ such that $\<(\n_{J_kU} \rho)U-(\n_U \rho) J_kU, U\>=-3\|U\|^2\<m_k,U\>$. It follows
that the quadratic map $Q$ defined by \eqref{eq:defQ} satisfies $\<Q(U),J_kU\>=3\|U\|^2\<m_k,U\>$,
for all $U \in \Ro$ and all $k=1,\dots,\nu$. As $\<Q(U), U\>=0$, we can define a quadratic map $T: \Rn \to \Rn$
such that for all $U \in \Rn$,
\begin{equation}\label{eq:QT3}
Q(U)=T(U)+3\sum\nolimits_{k=1}^\nu \<m_k,U\> J_kU, \qquad T(U) \perp \cI U.
\end{equation}
Taking $U = J_kX, \; X,U \perp \cI Y$ in \eqref{eq:confBZYX} and using \eqref{eq:QT3} we obtain
$-J_kT(Y) +3\|Y\|^2 m_k- 3\eta_k (\n_YJ_k) Y \in \cI Y$. From assertion~1 of Lemma~\ref{l:level} it follows
that the expression on the left-hand side is a linear combination of $Y, J_1Y, \dots , J_\nu Y$ whose coefficients are
linear forms of $Y$, so for some vectors $b_{ij} \in \Rn$,
\begin{equation}\label{eq:nYY83withT}
    (\n_YJ_i) Y=\eta_i^{-1}(m_i \|Y\|^2-\<m_i,Y\>Y)-(3\eta_i)^{-1}J_iT(Y)+\sum\nolimits_{j=1}^\nu \<b_{ij},Y\> J_jY.
\end{equation}
As $\<(\n_YJ_i) Y, J_jY\>$ is antisymmetric in $i$ and $j$ and $J_iT(Y) \perp \cI Y$ by \eqref{eq:QT3}
and the $J^2$-property, the $b_{ij}$'s satisfy \eqref{eq:bijbji}.

Take $X=J_kY, \; U \perp \cI Y = \cI X$ in \eqref{eq:confBZYX}. As $\< (\n_UJ_k)J_kY, Y \>=0$,
$\< (\n_YJ_k)U, X \>=-\< (\n_YJ_k)J_kY, U \>$ $=\< J_k(\n_YJ_k)Y, U \>$, and similarly,
$\< (\n_XJ_k)Y, U \>=-\< (\n_XJ_k)J_kX, U \>$ $=\< J_k(\n_XJ_k)X, U \>$, we obtain from
(\ref{eq:QT3}, \ref{eq:nYY83withT}) after simplification that
\begin{equation} \label{eq:confBZYX83}
2T(Y) + 2T(J_kY) -3\|Y\|^2 (\n \eta_k -2J_km_k) \in \cI Y.
\end{equation}
In the case $n=6, \; \nu=1$, we can prove the remaining identities (\ref{eq:Qsum}, \ref{eq:nXJX}, \ref{eq:neta},
\ref{eq:JiJkY}) of assertion~(i) as follows. Taking in \eqref{eq:confBZYX}
nonzero $X, Y, U$ such that the subspaces $\cI X, \cI Y$ and $\cI U$ are mutually orthogonal we obtain by
\eqref{eq:QT3} $\|X\|^{-2} \<T(X), U\> +\|Y\|^{-2} \<T(Y), U\>=0$ (which is, essentially, \eqref{eq:nrho}).
Replacing $Y$ by $J_1Y$ and using \eqref{eq:confBZYX83} we get $2T(X)+3\|X\|^2(\n \eta_1 -2J_1m_1) \in \cI X$.
The same is true with $X$ replaced by $J_1X$. Then by \eqref{eq:confBZYX83}, $\n \eta_1 -2J_1m_1 \in \cI X$, for all
$X \in \br^6$, so $\n \eta_1 -2J_1m_1=0$ (which is \eqref{eq:neta}). Then $T(X) \in \cI X$, hence $T(X)=0$, as
$T(X) \perp \cI X$ by \eqref{eq:QT3}. Now \eqref{eq:Qsum} follows from \eqref{eq:QT3},
\eqref{eq:nXJX} follows from \eqref{eq:nYY83withT}, and \eqref{eq:JiJkY} is trivially satisfied, as $\nu=1$.

In the cases $n=8, 12, \; \nu=3, \; J_1J_2=J_3$ (if $J_1J_2=-J_3$, we replace $J_3$ by $-J_3$, without changing
the curvature tensor \eqref{eq:confcs}), we argue as follows.
Adding equation \eqref{eq:confBZYX83} with $k=1$ and with $k=2$ and then subtracting \eqref{eq:confBZYX83} with $k=3$
and $Y$ replaced by $J_1Y$ we get
$4T(Y) -3\|Y\|^2 ((\n \eta_1 -2J_1m_1)$ $+ (\n \eta_2 -2J_2m_2)-(\n \eta_3 -2J_3m_3)) \in \cI Y$.
This remains true under a cyclic permutation of the indices $1,2,3$, which implies
$(\n \eta_k -2J_km_k)-(\n \eta_i -2J_im_i) \in \cI Y$, for all $i,k = 1,2,3$ and all $Y \in \Rn$. Then
$\n \eta_k -2J_km_k=\n \eta_i -2J_im_i = \tfrac43 V$, for some vector $V \in \Rn$, and $T(Y) -\|Y\|^2 V \in \cI Y$
from the above. As $T(Y) \perp \cI Y$ by \eqref{eq:QT3},
we obtain $T(Y)=\|Y\|^2V - \<Y,V\>Y-\sum\nolimits_{i=1}^3 \<J_iY,V\>J_iY$, so
\begin{equation}\label{eq:83withV}
\begin{gathered}
    \n \eta_i = 2J_im_i + \tfrac43 V, \qquad Q(Y)=\|Y\|^2V - \<Y,V\>Y+\sum\nolimits_{j=1}^3 \<3m_j+J_jV,Y\>J_jY, \\
    (\n_YJ_i) Y=(3\eta_i)^{-1} \bigl(\|Y\|^2(3m_i -J_iV) - \<3m_i-J_iV,Y\>Y
    +\sum\nolimits_{j=1}^3 \<3\eta_i b_{ij}- J_jJ_iV,Y\> J_jY \bigr)
\end{gathered}
\end{equation}
(the second equation follows from \eqref{eq:QT3}; the third one, from \eqref{eq:nYY83withT} and the fact that
$J_1J_2=J_3$).

Substitute $X=J_kY$ into \eqref{eq:confBZYX} again, with an arbitrary $U \perp X, Y$. Using \eqref{eq:83withV} and
the fact that the $J_i$'s are skew-symmetric, orthogonal and anticommute, we obtain after simplification:
\begin{equation*}
\sum\nolimits_{i=1}^3 \<3a_{ik}- 2J_iJ_kV,J_kY\> J_iY
+ \sum\nolimits_{i=1}^3 \<3a_{ik}- 2J_iJ_kV,Y\> J_kJ_iY \in \Span(Y, J_kY),
\end{equation*}
where $a_{ik}=\eta_k b_{ki}+\eta_i b_{ik}$. Taking $k=1$ and using the fact that $J_1J_2=J_3$ we get from the
coefficient of $J_2Y: \; 3J_1a_{12}-4J_2V+3a_{13}=0$, so $4V=-3J_2a_{13}+3J_3a_{12}$. Cyclically permuting the indices
$1,2,3$ and using the fact that $a_{ik}=a_{ki}$ we obtain $V=0$, which implies \eqref{eq:JiJkY}. As $V=0$, equations
(\ref{eq:Qsum}, \ref{eq:neta}, \ref{eq:nXJX}) follow from \eqref{eq:83withV}.

\smallskip

(ii) By \eqref{eq:defQ} and \eqref{eq:Qsum},
$\<(\n_X \rho)U-(\n_U \rho) X,X\>=3\sum_{i=1}^\nu \<m_i,X\>\<J_iX,U\>$, for all $X, U \in \Rn$. Polarizing this
equation and using the fact that the covariant derivative of $\rho$ is symmetric we obtain
$\<(\n_X\rho)U,Y\>+\<(\n_Y \rho)U,X\>-2\<(\n_U\rho) Y,X\>=3\sum_{i=1}^\nu (\<m_i,Y\>\<J_iX,U\>+\<m_i,X\>\<J_iY,U\>)$.
Subtracting the same equation, with $Y$ and $U$ interchanged, we get $\<(\n_Y \rho)U-(\n_U \rho)Y,X\>=
\sum_{i=1}^\nu (2\<m_i,X\>\<J_iY,U\>$ $+\<m_i,Y\>\<J_iX,U\>- \<m_i,U\>\<J_iX,Y\>)$, which proves \eqref{eq:skewrho}.

To establish \eqref{eq:bijne}, substitute $X \perp \cI Y, \; U = J_kY$ into \eqref{eq:confBZY}. Using the equations of
assertion~(i) and \eqref{eq:skewrho} we obtain after simplification:
\begin{equation*}
3 (\n_XJ_k) Y - (\n_YJ_k) X =  -3\eta_k^{-1}\<m_k,Y\>X +
\sum\nolimits_{i=1}^\nu \eta_k^{-1} \<\eta_i b_{ik} + 2 \K_{ik} J_km_k, Y\> J_iX \mod( \cI Y ).
\end{equation*}
Subtracting three times polarized equation \eqref{eq:nXJX} (with $i=k$) and solving for $(\n_YJ_k) X$ we get
\begin{equation}\label{eq:nyjx}
(\n_YJ_k) X= \sum\nolimits_{i=1}^\nu \tfrac14 \eta_k^{-1}\<3 \eta_k b_{ki}-\eta_i b_{ik}- 2 \K_{ik} J_km_k,Y\>J_iX
\mod( \cI Y ),
\end{equation}
for all $X \perp \cI Y$. Choose $s \ne k$ and define the subset $S_{ks} \subset \Rn \oplus \Rn$ by
$S_{ks}=\{(X, Y) \, : \, X, Y \ne 0$, $X, J_kX, J_sX \perp \cJ Y \}$. It is easy to see that
$(X,Y) \in S_{ks} \Leftrightarrow (Y,X) \in S_{ks}$ and that replacing $\cJ Y$ by $\cI Y$ in the definition of
$S_{ks}$ gives the same set $S_{ks}$. Moreover, the set $\{X \, : \, (X,Y) \in S_{ks}\}$ (and hence the set
$\{Y \, : \, (X,Y) \in S_{ks}\}$) spans $\Rn$. If $n=8, \; \nu=3, \; J_1J_2=J_3$, this easily follows from the
$J^2$-property; in all the other cases, from \cite[Lemma~3.2 (4)]{Nhjm}. For $(X, Y) \in S_{ks}$, take the inner
product of \eqref{eq:nyjx} with $J_sX$. Since $\<(\n_YJ_k) X, J_sX\>$ is antisymmetric in $k$ and $s$, we get
$\<(3-\eta_k\eta_s^{-1})b_{ks}+(3-\eta_s\eta_k^{-1})b_{sk}, Y\>=0$, for a set of the $Y$'s spanning $\Rn$. This
proves \eqref{eq:bijne}.

To prove \eqref{eq:allequal}, we apply assertion~2 of Lemma~\ref{l:level}
to equation \eqref{eq:JiJkY}. If $\nu=1$, there is nothing to prove (in fact, if $\nu=1$ and $n \ge 8$,
the claim of Theorem~\ref{t:cocl} follows from \cite[Theorem~1.1]{BG1}).
If $\eta_i b_{ij} + \eta_j b_{ji}=0$ for all $i \ne j$, then by \eqref{eq:bijne},
$b_{ij}+b_{ji}=0$ for all $i \ne j$, so by \eqref{eq:bijbji}, $\eta_i^{-1}J_jm_i=-\eta_j^{-1}J_im_j$.
Acting by $J_iJ_j$ we obtain that the vector $\eta_i^{-1}J_im_i$ is the same, for all $i=1, \dots, \nu$.

The only remaining possibility is $\nu =3$, $J_1J_2=J_3$ (if $J_1J_2=-J_3$ we can replace $J_3$ by $-J_3$ without
changing the curvature tensor \eqref{eq:confcs}), and $\eta_k b_{ki} + \eta_i b_{ik} = J_j v$, for all the triples
$\{i,j,k\}=\{1,2,3\}$, where $v \ne 0$. We will show that this leads to a contradiction.
Note that by \eqref{eq:radon}, the existence of a $\Cliff(3)$-structure implies that $n$ is divisible by $4$, so
by the assumption of the lemma, $n \ge 8$.

If $\eta_i=\eta_k$ for some $i \ne k$, then from \eqref{eq:bijne} and $\eta_k b_{ki} + \eta_i b_{ik} = J_j v$ it
follows that $v =0$, a contradiction. Otherwise, if the $\eta_i$'s are pairwise distinct, we get
$b_{ik}= (3 \eta_i - \eta_k)(4 \eta_i (\eta_i-\eta_k))^{-1} J_jv$ for $\{i,j,k\}=\{1,2,3\}$. Substituting this
to \eqref{eq:bijbji} and acting by $J_j$ on the both sides we get
$\eta_i^{-1} J_im_i-\eta_k^{-1} J_km_k=\frac14 \varepsilon_{ik}(\eta_i^{-1} +\eta_k^{-1}) v$, for
$\{i,j,k\}=\{1,2,3\}$, where for $i \ne k$ we define $\varepsilon_{ik} = \pm 1$ by $J_iJ_k=\varepsilon_{ik}J_j$.
It is easy to see that $\ve_{jk}=-\ve_{jk}$ and $\ve_{jk}=\ve_{ij}$, where $\{i,j,k\}=\{1,2,3\}$. Then
$\sum_{i=1}^3 \eta_i^{-1}=0$ and
$\eta_i^{-1} J_im_i=\frac{1}{12}\ve_{jk}(\eta_j^{-1}-\eta_k^{-1}) v+w$, for some $w \in \Rn$. It then follows
from \eqref{eq:neta} that $\n \eta_i=\frac{1}{6}\ve_{jk}\eta_i(\eta_j^{-1}-\eta_k^{-1}) v+2\eta_i w$, which implies
$\n \ln|\eta_1\eta_2\eta_3|=6w$ and $\n \ln|\eta_i\eta_j^{-1}|=-\frac12 \ve_{ij} \eta_k^{-1}v$. Let
$\mathcal{U}' \subset \mathcal{U}$ be a neighbourhood of $x$
on which $\n \ln|\eta_1\eta_2^{-1}| \ne 0$. Then $v$ is a nowhere vanishing smooth vector field on $\mathcal{U}'$.
Multiplying the metric on $\mathcal{U}$ by a function $e^f$ changes neither the Weil tensor, nor the $J_i$'s, and
multiplies every $\eta_i$ by $e^{-f}$ and $\n$ acting on functions by $e^{-f}$. Taking
$f=\frac13 \ln|\eta_1\eta_2\eta_3|$ we can assume that $w=0$ on
$\mathcal{U}'$, so that $C=\eta_1\eta_2\eta_3$ is a constant. Then, as $\sum_{i=1}^3 \eta_i^{-1}=0$, we get
$\n \eta_i=\pm\frac{1}{6}\sqrt{1-4C^{-1}\eta_i^3} v$. It follows that $v =\n t$ for some smooth function
$t:\mathcal{U}' \to \br$ such that $\eta_i= -36 C \wp(t+c_i)$, where $\wp$ is the Weierstrass function satisfying
$(\frac{d}{dt}\wp(t))^2=4 \wp(t)^3+6^{-6}C^{-2}$ and $c_i \in \br$. Summarizing the identities of this paragraph,
we have pointwise pairwise nonequal functions $\eta_i:\mathcal{U}' \to \br \setminus\{0\}$ satisfying
\begin{equation}\label{eq:summarize}
\begin{gathered}
    v=\n t \ne 0, \quad \n \eta_i=\tfrac{1}{6}\ve_{jk}\eta_i(\eta_j^{-1}-\eta_k^{-1}) v, \quad
    \sum\nolimits_{i=1}^3 \eta_i^{-1}=0, \quad \prod\nolimits_{i=1}^3 \eta_i=C=\mathrm{const}, \\
    m_i=-\tfrac{1}{12}\ve_{jk}\eta_i(\eta_j^{-1}-\eta_k^{-1}) J_iv, \quad
    b_{ii}= \tfrac{1}{12}\ve_{jk}(\eta_j^{-1}-\eta_k^{-1}) v, \quad
    b_{ij}= (3 \eta_i - \eta_j)(4 \eta_i (\eta_i-\eta_j))^{-1} J_kv,
\end{gathered}
\end{equation}
for $\{i,j,k\}=\{1,2,3\}$, where we used \eqref{eq:bijbji} to compute $b_{ii}$. Then equation \eqref{eq:nyjx}
simplifies to
$(\n_YJ_k) X= \sum_{i \ne k} \tfrac12 (\eta_k-\eta_i)^{-1} \<J_jv,Y\>J_iX \mod( \cI Y )$, for all $X \perp \cI Y$.
By the $J^2$-property, $\cI Y \perp \cI X$, so to find the ``$\mathrm{mod}( \cI Y )$"-part, we have to compute
the inner products of $(\n_YJ_k) X$ with $Y, J_1Y, J_2Y, J_3Y$. Since
$\<(\n_YJ_k) X,Y\>=-\<(\n_YJ_k) Y,X\>$, $\<(\n_YJ_k) X,J_kY\>=-\<(\n_YJ_k) J_kY,X\>=\< J_k(\n_YJ_k)Y,X\>$, and
$\<(\n_YJ_k) X,J_iY\>=-\<(\n_YJ_k) J_iY,X\>=-\< (\ve_{ki}(\n_YJ_j)-J_k(\n_YJ_i))Y,X\>$ (from $J_kJ_i=\ve_{ki}J_j$),
these products can be found using \eqref{eq:nXJX}. Simplifying by \eqref{eq:summarize} we get
\begin{multline*}
(\n_YJ_k) X= \tfrac{1}{12} \ve_{ij}(\eta_i^{-1}-\eta_j^{-1})(\<J_kv,X\>Y+\<v,X\>J_kY)\\
+\tfrac14 \eta_k^{-1}\sum\nolimits_{i \ne k}\<J_jv,X\>J_iY   
+\sum\nolimits_{i \ne k} \tfrac12 (\eta_k-\eta_i)^{-1} \<J_jv,Y\>J_iX,
\end{multline*}
for all $X \perp \cI Y$, where $\{i,j,k\}=\{1,2,3\}$. To compute $(\n_YJ_k) X$ when $X \in \cI Y$ we again use
\eqref{eq:nXJX} and the fact that $(\n_YJ_k)J_k=-J_k(\n_YJ_k)$ and $(\n_YJ_k) J_i=\ve_{ki}(\n_YJ_j)-J_k(\n_YJ_i)$,
for $\{i,j,k\}=\{1,2,3\}$. Simplifying by \eqref{eq:summarize} and using the above equation we get after some
calculations:
\begin{multline*}
(\n_YJ_k) X= \tfrac{1}{12} \ve_{ij}(\eta_i^{-1}-\eta_j^{-1})(\<J_kv,X\>Y+\<v,X\>J_kY-\<X,Y\>J_kv-\<X,J_kY\>v)\\
+\tfrac14 \eta_k^{-1}\sum\nolimits_{i \ne k}(\<J_jv,X\>J_iY-\<J_iY,X\>J_jv)
+\sum\nolimits_{i \ne k} \tfrac12 (\eta_k-\eta_i)^{-1} \<J_jv,Y\>J_iX,
\end{multline*}
for all $X,Y \in \Rn$, where $\{i,j,k\}=\{1,2,3\}$. Let for $a, b \in \Rn, \; a \wedge b$ be the skew-symmetric
operator defined by $(a \wedge b)X=\<a,X\>b-\<b,X\>a$. Then the above equation can be written in the form
$\n_YJ_k = \tfrac{1}{12} \ve_{ij}(\eta_i^{-1}-\eta_j^{-1}) (J_kv \wedge Y$ $+ v \wedge J_kY)
+\tfrac14 \eta_k^{-1}\sum\nolimits_{i \ne k} J_jv \wedge J_iY
+\sum\nolimits_{i \ne k} \tfrac12 (\eta_k-\eta_i)^{-1} \<J_jv,Y\>J_i$, that is,
\begin{equation}\label{eq:nyjx3w}
\begin{gathered}
\n_YJ_k = [J_k,AY], \quad
AY=\tfrac{1}{2} \sum\nolimits_{i=1}^3 \la_i J_iY \wedge J_iv + \sum\nolimits_{i=1}^3 \omega_i \<J_iv,Y\>J_i,\\
\la_i=\tfrac{1}{6} \ve_{jk}(\eta_j^{-1}-\eta_k^{-1}), \quad \omega_i=\tfrac{1}{4} \ve_{jk}(\eta_k-\eta_j)^{-1}
\quad \text{for } \{i,j,k\}=\{1,2,3\},
\end{gathered}
\end{equation}
where we used the fact that $[J_k, a \wedge b]=J_k a \wedge b + a \wedge J_k b$ and $[J_k,J_i]=2\ve_{ki}J_j$, for
$\{i,j,k\}=\{1,2,3\}$. By the Ricci formula, $\n^2_{Z,Y}J_k - \n^2_{Y,Z}J_k=[J_k,R(Y,Z)]$, where the tensor field
$\n^2 J_k$ is defined by $\n^2_{Z,Y}J_k=\n_Z(\n_YJ_k)-\n_{\n_ZY}J_k$ for vector fields $Y, Z$ on $\mathcal{U}'$. As
$\n_YJ_k = [J_k,AY]$ by \eqref{eq:nyjx3w}, this is equivalent to the fact that the operator
$F(Y,Z)=(\n_ZA)Y-(\n_YA)Z-[AY,AZ]-R(Y,Z)$ commutes with all the $J_s$'s, for all $Y,Z \in \Rn$ and all $s=1,2,3$.
As by \eqref{eq:confcs},
$R(Y, Z)  = Y \wedge \rho Z + \rho Y \wedge Z +\sum\nolimits_{i=1}^3 \eta_i (J_iY \wedge J_iZ + 2\< J_iY, Z \> J_i)$,
we obtain using \eqref{eq:nyjx3w} and the identities
$[ a \wedge b, c \wedge d]=\<a,d\> c \wedge b - \<a,c\> d \wedge b-$ $\<b,d\> c \wedge a + \<b,c\> d \wedge a$,
$[J_s, a \wedge b]=J_s a \wedge b + a \wedge J_s b$:
\begin{equation}\label{eq:FYZ}
\begin{aligned}
F(Y,Z) =& V(Y,Z) + \sum\nolimits_{i=1}^3 \<K_iY,Z\>J_i + S(Y,Z), \quad\text{where }
S(Y,Z) \in (\cI Y +\cI Z) \wedge \Rn \quad\text{and } \\
V(Y,Z)=& -\tfrac12 \sum\nolimits_{i=1}^3 \<J_iZ,Y\> (\la_i^2 v \wedge J_i v +
\ve_{jk}(\la_j\la_k -\la_i\la_k- \la_j\la_i) J_jv \wedge J_kv) \in \cI v \wedge \cI v,
\end{aligned}
\end{equation}
where for subspaces $L_1, L_2 \subset \Rn$, we denote $L_1 \wedge L_2$ the subspace of the space $\og(n)$ of the
skew-symmetric operators on $\Rn$ defined by $L_1 \wedge L_2=\Span(a \wedge b \, : \, a \in L_1, b \in L_2)$.
Note that if $L_1, L_2$ are $\cJ$-invariant (that is, $\cJ L_\a \subset L_\a$), then $L_1 \wedge L_2$
is $\mathrm{ad}_{\cJ}$-invariant, that is, $[J_s, L_1 \wedge L_2] \subset L_1 \wedge L_2$.

From \eqref{eq:nyjx3w} and using the fact that
$\omega_i \lambda_i=(24C)^{-1}\eta_i, \; \frac{d}{dt}\omega_i= 4\omega_i^2 + (12C)^{-1} \eta_i$ and
$\sum_i \omega_i^{-1}=0$, which follow from (\ref{eq:summarize}, \ref{eq:nyjx3w}), we obtain
\begin{equation}\label{eq:Ki}
K_i=-\omega_i((4\omega_i +\lambda_i) v \wedge J_i v + 4 \ve_{jk}(\omega_j+\omega_k) J_jv \wedge J_kv
+\lambda_i(48C+\|v\|^2)J_i +(J_iH+HJ_i)),
\end{equation}
where $\{i,j,k\}=\{1,2,3\}$, and $H$ is the symmetric operator associated to the Hessian of the function $t$
(that is, $\<HY,Z\>=Y(Zt)-(\n_Y)Zt$, for vector fields $Y, Z$ on $\mathcal{U}'$).

%\a_i(Y,Z)=&\<((\omega_i \lambda_i-\dot\omega_i) v \wedge J_i v
%+ 2 \ve_{jk}(\omega_j\omega_k - \omega_i\omega_k - \omega_j\omega_i ) J_jv \wedge J_kv\\
%&-(\omega_i \lambda_i\|v\|^2+2\eta_i)J_i -\omega_i(J_iH+HJ_i))Y,Z\>,

As $[F(Y,Z), J_s]=0$ and the subspace $\cI Y +\cI Z$ is $\cJ$-invariant (hence $(\cI Y +\cI Z) \wedge \Rn$
is $\mathrm{ad}_{\cJ}$-invariant), it follows from \eqref{eq:FYZ} that for all $Y, Z \in \Rn$ and all $s=1,2,3$,
\begin{equation}\label{eq:bracket}
[V(Y,Z),J_s] + \sum\nolimits_{i=1}^3 \<K_iY,Z\> [J_i,J_s] \in (\cI Y +\cI Z) \wedge \Rn.
\end{equation}
Take $Y, Z \in \cI v$ in \eqref{eq:bracket}. Then by
the $J^2$-property, $\cI Y +\cI Z = \cI v$ and  $[V(Y,Z),J_s] \in \cI v \wedge \cI v$, so \eqref{eq:bracket}
simplifies to $\sum\nolimits_{i \ne s} \ve_{is} \<K_iY,Z\> J_j \in \cI v \wedge \Rn$, where $\{i,j,s\}=\{1,2,3\}$.
Projecting this to the subspace $(\cI v)^\perp \wedge (\cI v)^\perp \subset \og(n)$ (with respect to the standard
inner product on $\og(n)$) and using the fact that
$(\cI v)^\perp$ is $\cJ$-invariant and $n \ge 8$, we get $\<K_iY,Z\>=0$, for all $i=1,2,3$ and all $Y,Z \in \cI v$.
Introduce the operators $\hat J_i= \pi_{\cI v} J_i \pi_{\cI v}, \; \hat H= \pi_{\cI v} H \pi_{\cI v}$ on $\cI v$.
As $\cI v$ is $\cJ$-invariant, the $\hat J_i$'s are anticommuting almost Hermitian structures on  $\cI v$.
Then the condition $\<K_iY,Z\>=0, \; Y,Z \in \cI v$, and \eqref{eq:Ki} imply
\begin{equation*}
(4\omega_i +\lambda_i) v \wedge \hat J_i v + 4 \ve_{jk}(\omega_j+\omega_k) \hat J_jv \wedge \hat J_kv
+\lambda_i(48C+\|v\|^2)\hat J_i +\hat J_i \hat H+ \hat H \hat J_i=0.
\end{equation*}
Multiplying by $\hat J_i$ and taking the trace we obtain $4\|v\|^2(\omega_i + \omega_j+\omega_k)
+\lambda_i(96C+3\|v\|^2) + \Tr \hat H=0$, where $\{i,j,k\}=\{1,2,3\}$, so $\lambda_i(96C+3\|v\|^2)$ does not
depend on $i=1,2,3$. As the $\la_i$'s are pairwise distinct (otherwise the condition $\sum_{i=1}^3 \eta_i^{-1}=0$
from \eqref{eq:summarize} is violated), we get $\|v\|^2=-32C$.

Now take $Y, Z \perp \cI v$ in \eqref{eq:bracket}. Projecting to $\cI v \wedge \cI v$ and using the fact that
$\cI v \wedge \cI v$ is $\mathrm{ad}_{\cJ}$-invariant we obtain that the operator
$V(Y,Z) + \sum\nolimits_{i=1}^3 \<K_iY,Z\> \hat J_i$ on $\cI v$ commutes with every $\hat J_s$. The centralizer
of the set $\{\hat J_1,\hat J_2,\hat J_3\}$ in the Lie algebra $\og(4)=\og(\cI v)$ is the three-dimensional subalgebra
spanned by $v \wedge \hat J_i v - \ve_{jk} \hat J_jv \wedge \hat J_kv, \; \{i,j,k\}=\{1,2,3\}$ (``the right
multiplication by the imaginary quaternions"). Substituting $V(Y,Z)$ from \eqref{eq:FYZ} and using the fact that
$\hat J_i=\|v\|^{-2}(v \wedge \hat J_i v + \ve_{jk} \hat J_jv \wedge \hat J_kv)$ we obtain that
the operator $V(Y,Z) + \sum\nolimits_{i=1}^3 \<K_iY,Z\> \hat J_i$ commutes with all the
$\hat J_s$'s, for $Y,Z \perp \cI v$, if and only if
$-\tfrac12 \<J_iZ,Y\> (\la_i^2 + \la_j\la_k -\la_i\la_k- \la_j\la_i) + 2 \|v\|^{-2} \<K_iY,Z\>=0$, for all
$i=1,2,3$. Substituting the $\la_i$'s from \eqref{eq:nyjx3w} and $\<K_iY,Z\>$ from \eqref{eq:Ki} and taking into
account that $\|v\|^2=-32C$, which is shown above, we obtain
$\<(J_iH+HJ_i-32C\la_i J_i)Y,Z\>=0$, for all $Y,Z \perp \cI v$ and all $i=1,2,3$.
Then $\pi (J_iH+HJ_i) \pi =32C\la_i \pi J_i \pi$, where $\pi=\pi_{(\cI v)^\perp}$. Multiplying both sides by
$\pi J_i \pi$ from the right and using the fact that $[\pi,J_i]=0$ (as $(\cI v)^\perp$ is $\cJ$-invariant) we get
$\pi (J_iHJ_i-H) \pi =-32C\la_i \pi$. Taking the traces of the both sides we obtain
$-2\Tr (\pi H \pi) =-32C\la_i (n-4)$, which is a contradiction, as $n > 4$ and the $\la_i$'s are pairwise distinct
(which follows from the equation $\sum_{i=1}^3 \eta_i^{-1}=0$ of \eqref{eq:summarize}).
\end{proof}
}

{

The next lemma shows that the relations similar to (\ref{eq:lnablarhoi}, \ref{eq:lnablarhoii}) of
Lemma~\ref{l:nablarho} also hold in all the remaining
cases when $n=8$ (that is, when $\nu \ne 3$ and when $\nu =3$ and $J_1J_2\ne \pm J_3$). As it is shown in
Lemma~\ref{l:locc1}, in all these cases the Weyl tensor has a smooth $\Cliff(7)$-structure in a
neighbourhood $\mU$ of every point $x \in M'$. Moreover, by assertion~2 of Lemma~\ref{l:formofR}, that
$\Cliff(7)$-structure is an ``almost Hermitian octonion structure", in the following sense. For every
$y \in \mU$, we can identify $\Ro=T_yM^8$ with $\Oc$ and of $\br^7$ with $\Oc'=1^\perp$ via linear isometries
$\iota_1, \iota_2$ respectively in such a way that the orthogonal multiplication \eqref{eq:ortmult} defined by
$\Cliff(7)$ has the form \eqref{eq:JuX8}: $J_uX = X u$, for every $X \in \Ro = \Oc, \; u \in \Oc'$.

\begin{lemma}\label{l:nablaJ8}
Let $x \in M' \subset M^8$ and let $\mU$ be the neigbourhood of $x$ defined in Lemma~\ref{l:locc1}. For every point
$y \in \mU$,
identify $\Ro=T_yM^8$ with $\Oc$ via a linear isometry in such a way that the Clifford structure $\Cliff(7)$ on $\Ro$
is given by \eqref{eq:JuX8}. Then there exist $m, t, b_{ij} \in \Ro = \Oc, \; i,j=1,\dots, 7$, such that for all
$X, U \in \Ro = \Oc$,
\begin{subequations}\label{eq:lnablaJ8}
\begin{gather}\label{eq:nUJX8}
    (\n_U J_i)X = \sum\nolimits_{j=1}^7 \<b_{ij},U\>Xe_j+(X(U^*m)-\<m,U\>X)e_i+\<m, Ue_i\>X,\\
    b_{ij}+b_{ji}=0, \label{eq:bijskew8} \\
    (\n_X \rho)U-(\n_U \rho)X=\tfrac34 (X \wedge U) t
    +2\sum\nolimits_{i=1}^7 \eta_i (\<me_i,U\>Xe_i-\<me_i,X\>Ue_i+2\<Xe_i,U\>me_i), \label{eq:nrho8}\\
    \n \eta_i=-4\eta_i m -\tfrac12 t.  \label{eq:neta8}
\end{gather}
\end{subequations}
\end{lemma}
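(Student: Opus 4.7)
\medskip

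\noindent\textbf{Proof proposal.} The argument will follow the same general template as Lemma~\ref{l:nablarho} --- substitute test vectors into the contracted Bianchi identity \eqref{eq:confBZYX} and identify the resulting polynomial constraints on $\n \rho$ and the $\n J_i$'s --- but the dimension estimates of Lemma~\ref{l:nvsnu} and the span constraint of Lemma~\ref{l:level}(1) are useless here (with $n=8$, $\nu=7$ one has $\cJ X=X^\perp$ and $\cI X=\Ro$). They must be replaced by octonion arithmetic.

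First I would rewrite both \eqref{eq:confBZY} and \eqref{eq:confBZYX} using the octonion form $J_iX=Xe_i$. Expressions of the form $\sum_{i=1}^7\<Xe_i,Y\>Ze_i$ should be converted to octonion polynomials in $X,Y,Z$ by means of the $7$-term identity \eqref{eq:8sq} and its polarizations; the key identities
\[
\sum\nolimits_{i=1}^7 \<Xe_i,Y\>Ze_i=Z(X^*Y)-\<X,Y\>Z,\qquad
\sum\nolimits_{i=1}^7 \<Ue_i,V\>e_i=U^*V-\<U,V\>1
\]
(together with their polarized forms and the Moufang and scattering relations $a(ab)=a^2b$, $(ab^*)c+(ac^*)b=2\<b,c\>a$, $\<ab,ac\>=\|a\|^2\<b,c\>$) will be the main computational tool. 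Writing $\eta_i=\bar\eta+\hat\eta_i$ with $\bar\eta=\tfrac17\sum_i\eta_i$, the $\bar\eta$-piece of \eqref{eq:confcs} collapses via \eqref{eq:8sq} to an $\id$-correction of $\rho$, so the genuinely new information in the Bianchi identity is carried by the anisotropic part, which explains the appearance of the single octonion $t$ in \eqref{eq:nrho8} and \eqref{eq:neta8} in place of the families $m_i,b_{ij}$ of Lemma~\ref{l:nablarho}.

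Next I would substitute $X=J_kY=Ye_k$ and $U\perp X,Y$ into \eqref{eq:confBZYX}. Using that $(\n_U J_i)$ is skew-symmetric (from $J_i^*=-J_i$) and satisfies the differentiated anticommutation relation
\[
(\n_U J_i)J_j+J_j(\n_U J_i)+(\n_U J_j)J_i+J_i(\n_U J_j)=0,
\]
the Bianchi relation will yield a polynomial identity in $Y,U$ of octonion-bilinear type. Translating it into octonion notation via $J_i=R_{e_i}$ (right multiplication by $e_i$) and matching coefficients of the terms $Xe_j,\;X(U^*m)e_i,\;\<m,Ue_i\>X$ via the polarized $(ab^*)c+(ac^*)b=2\<b,c\>a$ identity should isolate the vector $m\in\Oc$ from the combinations of coefficients; the antisymmetry \eqref{eq:bijskew8} then follows immediately from the anticommutation relation above applied at $X=U$. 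The formula \eqref{eq:nrho8} would be obtained by polarizing the definition \eqref{eq:defQ} of $Q$ and substituting the octonion form of $(\n_U J_i)X$ just derived, mirroring the derivation of \eqref{eq:skewrho} in Lemma~\ref{l:nablarho}(ii). Finally, \eqref{eq:neta8} would follow from the term in \eqref{eq:confBZYX} proportional to $U(\eta_k)$ after choosing $X=Ye_k$ and a $Y$ such that $U\perp \cJ Y+\cJ J_kY$ (available since $n=8$, $\nu=7$ gives $\dim(\cJ Y+\cJ J_kY)\le 7$), then summing the resulting seven identities after multiplying by $e_k$ to collapse the $e_k$-dependence into the single vector $t$.

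The main obstacle I anticipate is the middle step: ruling out further octonion-polynomial terms in $(\n_U J_i)X$ beyond those listed in \eqref{eq:nUJX8}. Because $\Oc$ is non-associative, the naive "linear combination of $Xe_j$" description that worked in Lemma~\ref{l:nablarho} via Lemma~\ref{l:level}(1) is unavailable, and the classification of admissible quadratic octonion polynomials $\Ro\times\Ro\to\Ro$ satisfying skew-symmetry in $X$ and the anticommutation constraint must be carried out by hand. A secondary technical nuisance is the gauge freedom of Remark~\ref{rem:n8nu7}: all formulas derived must be consistent with replacing $\rho\mapsto\rho-\tfrac32 f\,\id$, $\eta_i\mapsto\eta_i+f$, which should be visible as the transformation $t\mapsto t-6\n f$ and $m\mapsto m$, providing a useful consistency check on the octonion identification.
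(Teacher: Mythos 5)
There are two genuine gaps, one of which you flag yourself, and one which you do not.

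The first is the one you acknowledge as ``the main obstacle'': deriving the general form of $(\n_U J_i)X$. The paper does not classify admissible skew-symmetric octonion polynomials from scratch. It cites \cite[Lemma~7]{Nmm}, which already states that for a $\Cliff(7)$-structure on $\Ro$ given by \eqref{eq:JuX8} there exist $b_{ij}$ with $b_{ij}+b_{ji}=0$ and an $\br$-linear $A:\Oc\to\Oc'$ with
$(\n_U J_i)X = \sum_j \<b_{ij},U\>Xe_j+(X\cdot AU)e_i + \<AU,e_i\>X$;
the remaining task is only to pin down $A$. This is done by complexifying \eqref{eq:confBZY}, passing to the bioctonions $\OC$, and substituting isotropic $Y$ and $X, U \in \JC Y$: since $\JC Y$ is itself isotropic, the Bianchi constraint collapses to $\sum_i \eta_i\<J_iX,U\>\<(Y\cdot AY)e_i, X\>=0$, and the argument of \cite[Lemma~8]{Nmm} then forces $AU=U^*m-\<U,m\>1$. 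Your sketch omits both the citation and the complexification/isotropic-vector device; without them, the ``classification by hand'' you propose is exactly the hole. Similarly, $b_{ij}+b_{ji}=0$ is not derived from the anticommutation relation at $X=U$ in the paper; it is part of the cited Lemma~7.

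The second gap is in your route to \eqref{eq:neta8}: you propose choosing $Y$ so that $U\perp\cJ Y + \cJ J_kY$, asserting $\dim(\cJ Y + \cJ J_kY)\le 7$. But with $n=8$, $\nu=7$ one has $\cJ Y=Y^\perp$ and $\cJ J_kY=(J_kY)^\perp$, two distinct $7$-planes whose sum is all of $\Ro$; no nonzero $U$ exists orthogonal to it. The paper instead builds the auxiliary quantities $f_{ij}=(\eta_i-\eta_j)b_{ij}+\K_{ij}(\n\eta_i-2\eta_i m)$ and a modified quadratic map $T$, works out a polynomial identity in $Y,u$ (its equation \eqref{eq:XYu8}), and uses \cite[Lemma~6]{Nmm} to parametrize the linear operators appearing there before extracting \eqref{eq:nrho8} and \eqref{eq:neta8} by matching terms. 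That path never needs the dimension estimate you invoke, which is why it survives the $\nu=7$ collapse of $\cI$ to all of $\Ro$.

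Your observation about the gauge freedom of Remark~\ref{rem:n8nu7} as a consistency check is correct and is essentially how the paper exploits it later in Lemma~\ref{l:codazzi}, but it does not by itself fill either gap above.
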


\begin{proof}
In the proof we use standard identities of the octonion arithmetic (some of them are given in
Subsection~\ref{ss:cliffoct}).

By \cite[Lemma~7]{Nmm}, for the Clifford structure $\Cliff(7)$ given by \eqref{eq:JuX8}, there exist
$b_{ij} \in \Ro, \; i,j=1,\dots, 7$, satisfying \eqref{eq:bijskew8} and
an ($\br$-)linear operator $A: \Oc \to \Oc'$ such that for all $X, U \in \Ro = \Oc$,
\begin{equation}\label{eq:nUJX8mm}
    (\n_U J_i)X = \sum\nolimits_{j=1}^7 \<b_{ij},U\>Xe_j+(X\cdot AU)e_i + \<AU,e_i\>X.
\end{equation}
Equation \eqref{eq:confBZY} is a polynomial equation in $24$ real variables, the
coordinates of the vectors $X, Y, U \in \Ro$. It still holds, if we allow $X, Y, U$ to
be complex and extend the tensors $J_i, \n J_i$ and $\<\cdot,\cdot\>$ to $\bc^8$ by the complex linearity.
The complexified inner product $\<\cdot,\cdot\>$ takes values in $\bc$ and is a nonsingular quadratic form on
$\bc^8$. Moreover, equation \eqref{eq:JuX8} is still true, if we identify $\bc^8$ with the bioctonion algebra
$\OC$, and $\bc^7$ with $1^\perp=\Oc' \otimes \bc$, the orthogonal complement to $1$ in $\OC$.

Let $Y \in \OC$ be a nonzero isotropic vector (that is, $Y^*Y = 0$) and let
$\JC Y = \Span_{\mathbb{C}}(J_1Y, \ldots, J_7Y)$. Then $Y \in \JC Y$ and the space $\JC Y$ is
isotropic: the inner product of any two vectors from $\JC Y$ vanishes. Choose $X, U \in \JC Y$ and take the inner
product of the complexified equation \eqref{eq:confBZY} with $X$. As $X, Y$ and $U$ are mutually orthogonal, we get
\eqref{eq:confBZYX}, which further simplifies to $\sum_{i=1}^7 \eta_i \< J_iX, U\> \<(\n_YJ_i) Y,X\>=0$,
as $\|X\|^2=\|Y\|^2=\<J_iY,X\>=\< J_iY,U\>=0$. Using \eqref{eq:nUJX8mm} we obtain
$\sum_{i=1}^7 \eta_i \< J_iX, U\> \<(Y\cdot AY)e_i ,X\>=0$, for all isotropic vectors $Y$ and for all
$X, U \in \JC Y$. It follows that $Y\cdot AY \perp \sum_{i=1}^7 \eta_i \< J_iX, U\> Xe_i$, for all $X, U \in \JC Y$.
As $Y\cdot AY = J_{AY}Y \in \JC Y$ and $\JC Y$ is isotropic, we get $Y\cdot AY \perp \JC Y$, so
$Y\cdot AY \perp \JC Y + \Span_{\bc}(\{\sum_{i=1}^7 \eta_i  \<J_i X, U\> J_iX \;|\; X, U \in \JC Y\})$.
Following the arguments in the proof of \cite[Lemma~8]{Nmm} starting with formula (29), we obtain that
$AU = U^*m - \<U, m\> \, 1$, for some $m \in \Oc$. Then equation \eqref{eq:nUJX8} follows from \eqref{eq:nUJX8mm}.

To prove \eqref{eq:nrho8} and \eqref{eq:neta8}, introduce the vectors $f_{ij} \in \Ro, \; i,j=1, \dots, 8$, and the
quadratic map $T: \Ro \to \Ro$ (similar to the map $Q$ of \eqref{eq:defQ}) by
\begin{gather}\label{eq:fij8}
f_{ij}=(\eta_i-\eta_j)b_{ij}+\K_{ij}(\n \eta_i-2\eta_i m), \\ \label{eq:T8}
\<T(X), U\>=\tfrac13 \<(\n_X \rho)U-(\n_U \rho) X,X\>-\sum\nolimits_{i=1}^7 \eta_i \<me_i,X\>\<Xe_i, U\>.
\end{gather}
Note that $f_{ij}=f_{ji}$ and $\<T(X), X\> = 0$. Take $X, Y, U$ to be mutually orthogonal vectors in $\Ro$. By
\eqref{eq:nUJX8} and \eqref{eq:bijskew8},
$\<(\n_UJ_i)X, Y \>=\sum\nolimits_{j=1}^7 \<b_{ij},U\>\<Xe_j, Y \>-\<m,U\>\<Xe_i, Y \>+\<(X(U^*m))e_i, Y \>=
\sum\nolimits_{j=1}^7 \<b_{ij}-\K_{ij} m,U\>\<Xe_j, Y \>+\<m((e_iY^*)X), U \>$, so
every term on the left-hand side
of \eqref{eq:confBZYX} can be written as the inner product of a vector depending on $X$ and $Y$ by $U$. As
$U \perp X, Y$ is arbitrary, we find after substituting \eqref{eq:JuX8} and \eqref{eq:nUJX8} into \eqref{eq:confBZYX}
and rearranging the terms:
\begin{equation*}
\begin{split}
&\|X\|^2 T(Y) +\|Y\|^2 T(X) + 2\sum\nolimits_{i=1}^7 \eta_i \<Ye_i,X\> (m((e_iY^*)X)+(Y(X^*m))e_i) \\
&+\sum\nolimits_{i,j=1}^7 \<Ye_j,X\>(\<f_{ij},X\> Ye_i-\<f_{ij},Y\> Xe_i)
- \sum\nolimits_{i,j=1}^7 \<Ye_i,X\>\<Ye_j,X\> f_{ij} \in \Span(X,Y),
\end{split}
\end{equation*}
for all $X \perp Y$ (where we used the fact that $(X(Y^*m))e_i=-(Y(X^*m))e_i$, as $X \perp Y$). Taking the inner
products with $X$ and with $Y$ we obtain
\begin{equation*}
\begin{split}
&\|X\|^2 T(Y) +\|Y\|^2 T(X) + 2\sum\nolimits_{i=1}^7 \eta_i \<Ye_i,X\> (m((e_iY^*)X)+(Y(X^*m))e_i) \\
&+\sum\nolimits_{i,j=1}^7 \<Ye_j,X\>(\<f_{ij},X\> Ye_i-\<f_{ij},Y\> Xe_i)
- \sum\nolimits_{i,j=1}^7 \<Ye_i,X\>\<Ye_j,X\> f_{ij}\\
&=\<T(Y),X\>X+\<T(X),Y\>Y,
\end{split}
\end{equation*}
for all $X \perp Y$. Taking $X=Yu, \; u= \sum_{i=1}^7 u_ie_i \in \Oc'$ and regrouping the terms we obtain
\begin{equation} \label{eq:XYu8}
\begin{split}
&\|u\|^2 T(Y) + T(Yu)
+ 2\sum\nolimits_{i=1}^7 \eta_i u_i (2\<Y,me_i\> Yu - 2\<Yu,me_i\> Y +
2\|Y\|^2(mu)e_i) \\
&+\sum\nolimits_{i,j=1}^7 u_j(\<f_{ij}+8\K_{ij}\eta_im,Yu\> Ye_i-\<f_{ij}+8\K_{ij}\eta_im,Y\> (Yu)e_i)
- \sum\nolimits_{i,j=1}^7 \|Y\|^2 u_i u_j f_{ij}\\
&=\|Y\|^{-2} \<T(Y),Yu\>Yu+\|Y\|^{-2}\<T(Yu),Y\>Y,
\end{split}
\end{equation}
where we used %the fact that
$m((e_iY^*)X)+(Y(X^*m))e_i=2\<Y,me_i\> Yu - 2\<Yu,me_i\> Y +4 \<Yu,m\>Ye_i-4 \<Y,m\>(Yu)e_i+2\|Y\|^2(mu)e_i$, which
follows from $m((e_iY^*)X)=(Y(X^*m))e_i-2\<m,Ye_i\> X - 2\<X,me_i\> Y$, for all $X,Y$, and
$(Y(X^*m))e_i=-2 \<Y,m\>(Yu)e_i-2 \<Y,mu\>Ye_i+\|Y\|^2(mu)e_i$, for $X=Yu, \; u \perp 1$. By assertion~1 of
Lemma~\ref{l:level} (with $\nu=1$ and $\cI Y = \Span(Y, Yu)$) we obtain that both coefficients on the right-hand side
of \eqref{eq:XYu8}, $\|Y\|^{-2}\<T(Y),Yu\>$ and $\|Y\|^{-2}\<T(Yu),Y\>$, are linear forms of $Y \in \Ro$, for every
$u \in \Oc'$. As $\<T(Y),Y\>=0$, this implies that there exists an ($\br$-)linear operator $C: \Oc \to \Oc'$ such that
$\|Y\|^{-2}Y^*T(Y)=CY$, so $T(Y)=Y \cdot CY$, for all $Y \in \Oc$.
Substituting this to \eqref{eq:XYu8} and rearranging the terms we obtain
\begin{equation} \label{eq:XYu8C}
\begin{split}
&(Yu) \Bigl(C(Yu) -\sum\nolimits_{i,j=1}^7 u_j\<f_{ij}+8\K_{ij}\eta_im,Y\>e_i\Bigr)\\
&+ Y\Bigl(\|u\|^2 CY + 4\sum\nolimits_{i=1}^7 \eta_i u_i (\<Y,me_i\> u - \<Yu,me_i\> 1 +Y^*((mu)e_i)) \\
&+\sum\nolimits_{i,j=1}^7 u_j\<f_{ij}+8\K_{ij}\eta_im,Yu\> e_i
- \sum\nolimits_{i,j=1}^7 u_i u_j Y^*f_{ij} - \<CY,u\>u+\<C(Yu),u\>1 \Bigr)= 0,
\end{split}
\end{equation}
The left-hand side of \eqref{eq:XYu8C} has the form $(Yu)L(Y,u)+YF(Y,u)$, where $L(Y,u)$ and $F(Y,u)$ are
($\br$-) linear operator on $\Oc$, for every $u \in \Oc'$. By \cite[Lemma~6]{Nmm}, for every unit octonion
$u \in \Oc', \; L(Y,u)=\<a(u),Y\>1+\<t(u),Y\>u+Y^*p(u)$, for some functions $a, t, p: S^6 \subset \Oc' \to \Oc$.
Extending $a, t, p$ by homogeneity (of degree $1,0,1$ respectively) to $\Oc'$ we obtain % t(0) ok? or to o' bez 0?
$C(Yu) -\sum\nolimits_{i,j=1}^7 u_j\<f_{ij}+8\K_{ij}\eta_im,Y\>e_i =\<a(u),Y\>1+\<t(u),Y\>u+Y^*p(u)$,
for all $u \in \Oc'$. Moreover, $p(u) = -a(u)$, as $C(Y) \perp 1$.
By the linearity of the left-hand side by $u$, we get $\<a(u_1+u_2)-a(u_1)-a(u_2),Y\>1+
\<t(u_1+u_2)-t(u_1),Y\>u_1+\<t(u_1+u_2)-t(u_2),Y\>u_2+Y^*(a(u_1+u_2)-a(u_1)-a(u_2))=0$, for all $u_1, u_2 \in \Oc'$.
Then $Y^*(a(u_1+u_2)-a(u_1)-a(u_2)) \in \Span(1, u_1, u_2)$, for all $Y \in \Oc$, which is only possible when $a(u)$
is linear, that is $a(u)=Bu$, for some ($\br$-)linear operator $B: \Oc' \to \Oc$. It follows that
$t(u_1+u_2)=t(u_1)=t(u_2)$, that is, $t \in \Oc$ is a constant. So
$C(Yu) =\sum\nolimits_{i,j=1}^7 u_j\<f_{ij}+8\K_{ij}\eta_im,Y\>e_i+\<Bu,Y\>1+\<t,Y\>u-Y^*Bu$.
Taking the inner product of the both sides with $v \in \Oc'$ and subtracting from the resulting equation the same
equation with $u$ and $v$ interchanged we obtain $\<C(Yu),v\>-\<C(Yv),u\>=\<Bv,Yu\>-\<Bu,Yv\>$, since $f_{ij}=f_{ji}$
by \eqref{eq:fij8}. It follows that $\<C^tv-Bv, Yu\>=\<C^tu-Bu, Yv\>$, where $C^t$ is the operator adjoint to $C$.
Now taking $u \perp v$ and $Y=uv$
we get $\|u\|^2 \<C^tv-Bv, v\>=-\|v\|^2 \<C^tu-Bu, u\>$, which implies $C = B^t$. Then from the above,
$\<C(Yu),e_i\> = \sum\nolimits_{j=1}^7 u_j\<f_{ij}+8\K_{ij}\eta_im,Y\>+\<t,Y\>u_i-\<Bu,Ye_i\>= \<Be_i, Yu\>$, so
$\sum\nolimits_{j=1}^7 u_j(f_{ij}+\K_{ij}(8\eta_im+t))+(Bu)e_i+(Be_i)u=0$. Therefore
\begin{equation} \label{eq:fT8}
T(Y)= Y \cdot CY = Y \cdot B^tY, \qquad
f_{ij}=-\K_{ij}(8\eta_i m +t) - (Be_i)e_j-(Be_j)e_i.
\end{equation}
Substituting \eqref{eq:fT8} to  \eqref{eq:XYu8C} and simplifying we obtain
$-\<Lu \cdot u, Y\>Y - \<Lu, Y\> Yu + \|Y\|^2 Lu \cdot u=0$, where $Lu=4Bu-tu-4\sum\nolimits_{i=1}^7 \eta_iu_i m e_i$.
Taking $Y \perp Lu, Lu \cdot u$ we get $Lu = 0$, so
\begin{equation}\label{eq:A8}
Bu=\tfrac14 tu+\sum\nolimits_{i=1}^7 \eta_iu_i m e_i.
\end{equation}
Substituting \eqref{eq:A8} to the first equation of \eqref{eq:fT8} and then to \eqref{eq:T8} and simplifying we obtain
that for arbitrary $X, U \in \Oc$,
$\<(\n_X \rho)U-(\n_U \rho)X, X\>=\frac34 (\<t,X\>\<X,U\>-\|X\|^2 \<t,U\>)+
6\sum\nolimits_{i=1}^7 \eta_i\<Xe_i,U\> \<m e_i,X\>$. Polarizing this equation we get
\begin{align*}
\<(\n_Y \rho)U-(\n_U \rho)Y, X\>+\<(\n_X \rho)U-(\n_U \rho)X, Y\>&=
\tfrac34 (\<t,X\>\<Y,U\>+\<t,Y\>\<X,U\>-2\<X,Y\> \<t,U\>)\\
&+6\sum\nolimits_{i=1}^7 \eta_i(\<Xe_i,U\> \<m e_i,Y\>+\<Ye_i,U\> \<m e_i,X\>).
\end{align*}
Subtracting the same equation, with $X$ and $U$ interchanged and using the fact that $\rho$ is symmetric we get
\eqref{eq:nrho8}. The second equation of \eqref{eq:fT8} and \eqref{eq:A8} give $f_{ii}=-6\eta_i m -\frac12 t$, which
by \eqref{eq:fij8} implies \eqref{eq:neta8}.
\end{proof}

\begin{lemma}\label{l:codazzi}
In the assumptions of Theorem~\ref{t:cocl}, let $x \in M'$, where $M' \subset M^n$ is defined in Lemma~\ref{l:locc1}.
Then there exists a neighbourhood $\mathcal{U}=\mathcal{U}(x)$ and a smooth metric on $\mathcal{U}$ conformally
equivalent to the original metric whose curvature tensor has the form \eqref{eq:confcs}, with $\rho$ a multiple
of the identity.
\end{lemma}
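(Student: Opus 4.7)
The plan is to produce the desired conformally equivalent metric in two stages: first a conformal change of the metric on a neighborhood $\mathcal{U}$ of $x$ that simultaneously makes the coefficients $\eta_i$ locally constant and turns $\rho$ into a Codazzi tensor, and then an application of the theorem of \cite{DS} on Codazzi tensors to force $\rho$ to be a multiple of the identity.

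For the first stage I would use the fact that Lemmas~\ref{l:nablarho} and~\ref{l:nablaJ8} exhibit the logarithmic gradients $\n\ln|\eta_i|$ as being governed by a single vector field. In the generic case, \eqref{eq:neta} combined with \eqref{eq:allequal} yields $\n\eta_i=2\eta_i p$ with $p$ independent of $i$, while in the case $n=8,\nu=7$ the analogous role is played by \eqref{eq:neta8} together with the gauge freedom of Remark~\ref{rem:n8nu7}. I would show that this common vector field is locally a gradient, $p=\n h$, and take $\tilde g=e^{2h}g$ on $\mathcal{U}$. Since the $(1,3)$-Weyl tensor is conformally invariant and the $J_i$'s are both $g$- and $\tilde g$-skew, the new curvature tensor has the form \eqref{eq:confcs} with the same $J_i$'s, with $\tilde\eta_i=e^{-2h}\eta_i$ now locally constant, and with a new symmetric operator $\tilde\rho$ that differs from $\rho$ by the standard Hessian-type correction $\n^2 h-dh\otimes dh+\tfrac12|\n h|^2 g$. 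To check that $\tilde\rho$ is Codazzi, I would substitute $m_i=-\eta_i J_i\n h$ (or its octonionic analogue) together with \eqref{eq:nXJX}, \eqref{eq:bijbji}, \eqref{eq:bijne} into \eqref{eq:skewrho} (or \eqref{eq:nrho8}) and verify that the resulting expression is annihilated by the skew part of the Hessian correction.

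With $\tilde\rho$ a Codazzi tensor, I would invoke \cite{DS}: for each pair of eigenvalues $\lambda,\mu$ of $\tilde\rho$, the subspace $E_\lambda\wedge E_\mu\subset\Lambda^2 T\mathcal{U}$ is invariant under the curvature operator. Splitting the latter according to \eqref{eq:confcs}, the $\tilde\rho$-piece acts as multiplication by $\lambda+\mu$ on $E_\lambda\wedge E_\mu$, so the Weyl/Clifford piece must preserve the decomposition as well. To complete the argument I would assume for contradiction that $\tilde\rho$ has two distinct eigenvalues and pick unit $X\in E_\lambda,Y\in E_\mu$; applying the invariance to $W(X,Y)Z$ for varying $Z$ and using Remark~\ref{rem:climpliesos}, which identifies the non-central eigenspaces of the Jacobi operator $W_X$ with $\Span(J_1X,\dots,J_\nu X)$, one forces each $E_\lambda$ to be invariant under every $J_i$. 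A dimension count against assertion~(i) of Lemma~\ref{l:nvsnu} then rules out a proper such decomposition, so $\tilde\rho=c\,\id$ on $\mathcal{U}$.

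The main obstacle I anticipate is the bookkeeping in the first stage, particularly the verification that the Hessian correction from the conformal change precisely cancels the skew part \eqref{eq:skewrho} of $\n\rho$. The $n=8,\nu=7$ case is especially delicate, because \eqref{eq:neta8} contains the extra term $-\tfrac12 t$ beyond the $-4\eta_i m$ piece, and absorbing it requires combining the conformal change with the gauge shift of Remark~\ref{rem:n8nu7}; the octonionic identity \eqref{eq:nrho8} must then be handled using the arithmetic rules listed in Subsection~\ref{ss:cliffoct}. The second stage is conceptually cleaner but still requires a careful use of the Clifford structure to rule out eigenspace decompositions compatible with both the action of the $J_i$'s and the curvature-invariance.
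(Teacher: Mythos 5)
The first stage of your plan is essentially the paper's: from $\nabla\eta_i=2\eta_i p$ one sees that $p$ is automatically a gradient ($p=\tfrac12\nabla\ln|\eta_1|$), one sets $\tilde g = |\eta_1|\,g$, and the $n=8,\,\nu=7$ case needs an extra gauge shift via Remark~\ref{rem:n8nu7} to kill the residual $-\tfrac12 t$ term of \eqref{eq:neta8}. The paper, however, avoids your anticipated ``bookkeeping obstacle'' entirely: after the conformal change the metric still has the shape \eqref{eq:confcs}, so the identities of Lemma~\ref{l:nablarho} (resp.\ Lemma~\ref{l:nablaJ8}) apply verbatim to the new $\rho',\eta_i',m_i'$, and the observations $\eta_1'=\mathrm{const}\Rightarrow m_i'=0$ and \eqref{eq:skewrho} give Codazzi for free; no explicit verification that the Hessian correction cancels the skew part of $\nabla\rho$ is needed.

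The second stage has a genuine gap. You assert that applying the Derdzinski--Shen invariance to $W(X,Y)Z$ ``forces each $E_\lambda$ to be invariant under every $J_i$,'' and then dispose of this by a dimension count. But this intermediate claim is not established, and in fact runs opposite to what the constraint \eqref{eq:codazzi} actually yields. What one gets from \eqref{eq:codazzi} is only that the operator $\hat R_X=\sum_i\eta_i\<J_iX,\cdot\>J_iX$ commutes with the projections $\pi_\beta$, hence $\cJ X=\pi_1\cJ X\oplus\pi_2\cJ X$ --- a splitting of $\cJ X$, not $\cJ X\subset E_\alpha$. The paper then introduces the counting functions $c_{\alpha\beta}=\dim\pi_\beta\cJ X$ and derives, by playing two families of quadratic identities off each other, that $c_{\alpha\beta}=0$ (which would be $J_i$-invariance of $E_\alpha$) actually leads to a contradiction, and one is forced into $c_{\alpha\beta}+c_{\beta\beta}\ge d_\beta$, hence $2\nu\ge n$. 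That inequality is what is then excluded by Lemma~\ref{l:nvsnu} --- but only when $2\nu<n$; the remaining cases $n=8,\ \nu\ge4$ require a separate, rather involved octonionic argument using Cayley planes (property \eqref{eq:[Dpi]} and the case analysis over $(d_1,d_2)=(1,7),(2,6),(3,5),(4,4)$), which your proposal does not address at all. So as it stands the proposal replaces the hard combinatorial core of the lemma with an unjustified $J_i$-invariance claim and omits the entire $n=8$ exceptional analysis.
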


\begin{proof}
Let $x \in M'$ and let $\mathcal{U}$ be the neighbourhood of $x$ on which the Weyl tensor has the smooth Clifford
structure defined in Lemma~\ref{l:locc1}. We can assume that $\nu >0$, as in the case of a $\Cliff(0)$-structure,
the curvature tensor given by \eqref{eq:confcs} has the form
$R(X, Y) Z = \< X, Z \> \rho Y +\<\rho X, Z\> Y - \< Y, Z \> \rho X -\<\rho Y, Z\> X$, so the Weyl tensor vanishes.
Then the metric on $\mU$ is locally conformally flat, that is, is conformally equivalent to a one with $\rho=0$.

If $n=8, \; \nu =7$, and all the $\eta_i$'s at $x$ are equal, then they are equal at some neighbourhood of $x$ (by
definition of $M'$). By Remark~\ref{rem:n8nu7}, we can replace $\rho$ by $\rho +\frac32 \eta_1 \, \id$ and
$\eta_i$ by $0=\eta_i-\eta_1$ in \eqref{eq:confcs} arriving at the case $\nu=0$ considered above.

For the remaining part of the proof, we will assume that in the case $n=8, \; \nu =7$, at least two of the $\eta_i$'s
at $x$ are different; up to relabelling, let $\eta_1 \ne \eta_2$ at $x$, and also on a neighbourhood of $x$ (replace
$\mathcal{U}$ by a smaller neighbourhood, if necessary).
Let $f$ be a smooth function on $\mathcal{U}$ and let $\<\cdot, \cdot\>' = e^f\<\cdot, \cdot\>$. Then
$W'=W, \; J_i'=J_i, \; \eta_i'= e^{-f} \eta_i$ and, on functions, $\n'=e^{-f}\n$, where we use the dash for the
objects associated to metric $\<\cdot, \cdot\>'$. Moreover, the curvature tensor $R'$ still has the form
\eqref{eq:confcs}, and all the identities of Lemma~\ref{l:nablarho} and of Lemma~\ref{l:nablaJ8} remain valid.

In the cases considered in Lemma~\ref{l:nablarho}, the ratios $\eta_i/\eta_1$ are constant, as it follows from
(\ref{eq:neta},\ref{eq:allequal}).
In particular, taking $f=\ln|\eta_1|$ we obtain that $\eta_1'$ is a constant,
so all the $\eta_i'$ are constant, $m_i'=0$ by \eqref{eq:neta}, so $(\n'_Y \rho')U-(\n'_U \rho')Y=0$
by \eqref{eq:skewrho}. In the case $n=8, \; \nu =7$ (Lemma~\ref{l:nablaJ8}), take $f=\ln|\eta_1-\eta_2|$. Then
by \eqref{eq:neta8}, $\n f = - 4m$ and $\n'\eta_i'=-\tfrac12 e^{-2f}t$ which implies
$m'=-\frac14 \n'\ln|\eta_1'-\eta_2'|=0, \; t'=e^{-2f}t$, again by \eqref{eq:neta8} for the metric $\<\cdot, \cdot\>'$.
Then by \eqref{eq:nrho8}, $(\n'_X \rho')U-(\n'_U \rho')X=\tfrac34 (X \wedge' U) t'$.
By Remark~\ref{rem:n8nu7}, we can replace $\rho'$ by $\tilde\rho=\rho' +\frac32 (\eta_1' +C)\, \id$ and
$\eta_i'$ by $\tilde\eta_i=\eta_i'-(\eta_1' +C)$ without changing the curvature tensor $R'$ given by \eqref{eq:confcs}
($C$ is a constant chosen in such a way that $\tilde\eta_i \ne 0$ anywhere on $\mathcal{U}$). Then
by \eqref{eq:nrho8} and \eqref{eq:neta8} for the metric
$\<\cdot, \cdot\>', \; (\n'_X \tilde\rho)U-(\n'_U \tilde\rho)X=0$.

Dropping the dashes and the tildes, we obtain that, up to a conformal smooth change of the metric on
$\mathcal{U}$, the curvature tensor has the form \eqref{eq:confcs}, with $\rho$ satisfying the identity
\begin{equation*}
 (\n_Y \rho)X=(\n_X \rho)Y,
\end{equation*}
for all $X, Y$, that is, with $\rho$  being a symmetric \emph{Codazzi tensor}.

Then by \cite[Theorem~1]{DS}, at every point of $\mathcal{U}$, for any three eigenspaces
$E_\b, E_\gamma, E_\a$ of $\rho$, with $\a \notin \{\b, \gamma\}$,
the curvature tensor satisfies $R(X,Y)Z = 0$, for all $X \in E_\b, \; Y \in E_\gamma, \; Z \in E_\a$.
It then follows from \eqref{eq:confcs} that
\begin{equation}\label{eq:codazzi}
\begin{gathered}
\sum\nolimits_{i=1}^\nu \eta_i (2 \< J_iX, Y \> J_iZ + \< J_iZ, Y \> J_iX - \< J_iZ, X \> J_iY) = 0, \\
\text{for all } X \in E_\b, \; Y \in E_\gamma, \; Z \in E_\a, \quad \a \notin \{\b, \gamma\}.
\end{gathered}
\end{equation}
Suppose $\rho$ is not a multiple of the identity. Let $E_1, \ldots, E_p, \; p \ge 2$, be the eigenspaces of $\rho$.
If $p>2$, denote $E_1'=E_1, \; E_2'=E_2 \oplus \dots \oplus E_p$. Then by linearity, \eqref{eq:codazzi}
holds for any $X, Y \in E'_\a, \; Z \in E'_\b$, such that $\{\a, \b\}=\{1,2\}$. Hence to prove
the lemma it suffices to show that \eqref{eq:codazzi} leads to a contradiction,
in the assumption $p=2$. For the rest of the proof, suppose that $p=2$. Denote $\dim E_\a =d_\a, \; d_1 \le d_2$.

Choose
$Z \in E_\a, \; X, Y \in E_\b,\; \a \ne \b$, and take the inner product of \eqref{eq:codazzi} with $X$. We get
$\sum_{i=1}^\nu \eta_i \< J_iX, Y \>\< J_iX, Z\> = 0$.
It follows that for every $X \in E_\a$, the subspaces $E_1$ and $E_2$ are invariant subspaces of the symmetric
operator $\hat R_X \in \End(\Rn)$ defined by $\hat R_XY=\sum_{i=1}^\nu \eta_i \< J_iX, Y \>J_iX$. So
$\hat R_X$ commutes with the orthogonal projections $\pi_\b: \Rn \to E_\b, \; \b=1,2$. Then for all $\a,\b=1,2$
($\a$ and $\b$ can be equal), all $X \in E_\a$ and all $Y \in \Rn, \quad
\sum_{i=1}^\nu \eta_i \< J_iX, \pi_\b Y \> J_iX = \sum_{i=1}^\nu \eta_i \< J_iX, Y \> \pi_\b J_iX$.
Taking $Y=J_jX$ we get that $\pi_\b J_jX \subset \cJ X$, that is $\pi_\b \cJ X \subset \cJ X$, for all
$X \in E_\a, \; \a,\b=1,2$. As $\pi_1 + \pi_2 = \id$, we obtain
$\cJ X \subset \pi_1 \cJ X \oplus \pi_2 \cJ X \subset \cJ X$, hence $\cJ X = \pi_1 \cJ X \oplus \pi_2 \cJ X$.
As every function $f_{\a\b}: E_\a \to \mathbb{Z}, \; \a,\b=1,2$, defined by
$f_{\a\b}(X) = \dim \pi_\b \cJ X, \; X \in E_\a$, is lower semi-continuous, and $f_{\a 1}(X) + f_{\a 2}(X) = \nu$
for all nonzero $X \in E_\a$, there exist constants $c_{\a\b}$, with $c_{\a 1} + c_{\a 2} = \nu$, such that
$\dim \pi_\b \cJ X = c_{\a\b}$, for all $\a,\b=1,2$ and all nonzero $X \in E_\a$.

Let $X, Y \in E_\a, \; Z \in E_\b, \; \b \ne \a$. Taking the inner product of \eqref{eq:codazzi} with
$J_jZ, \; j=1, \ldots, \nu$, we get
\begin{equation*}
2 \eta_j \< J_jX, Y \> \|Z\|^2 =
\sum\nolimits_{i \ne j} \eta_i (\< J_iZ, X \> \<J_iY,J_jZ\> - \< J_iZ, Y \> \<J_iX, J_jZ\>).
\end{equation*}
As $\< J_iZ, X \>=\< J_i\pi_\b Z, X \>=-\< Z, \pi_\b J_iX \>$ (and similarly for $\<J_iZ, Y \>$), the right-hand side,
viewed as a quadratic form of $Z\in E_\b$, vanishes for all $Z \in (\pi_\b \cJ X)^\perp \cap (\pi_\b \cJ Y)^\perp$,
that is, on a subspace of dimension at least $d_\b-2 c_{\a\b}$. So for $\a \ne \b$, either $2 c_{\a\b} \ge d_\b$,
or $\cJ E_\a \perp E_\a$, that is, $\pi_\b \cJ X= \cJ X$, for all $X \in E_\a$, so $c_{\a\b}=\nu$.

Similarly, if $Z \in E_\a, \; X, Y \in E_\b, \; \b \ne \a$, the inner product of \eqref{eq:codazzi} with
$J_jX, \; j=1, \ldots, \nu$, gives
\begin{equation*}
\eta_j \< J_jZ, Y \> \|X\|^2 =
\sum\nolimits_{i=1}^\nu \eta_i (- 2 \< J_iX, Y \> \<J_iZ, J_jX\> + \< J_iZ, X \> \<J_iY, J_jX\>).
\end{equation*}
As $\< J_iX, Y \>=-\< X, \pi_\b J_iY \>, \; \< J_iZ, X \>=-\< X, \pi_\b J_iZ \>$, the right-hand side, viewed as a
quadratic form of $X\in E_\b$, vanishes on the subspace $(\pi_\b \cJ Y)^\perp \cap (\pi_\b \cJ Z)^\perp$ whose
dimension is at least $d_\b-c_{\a\b} - c_{\b\b}$. We obtain that for $\a \ne \b$, either
$c_{\a\b} + c_{\b\b} \ge d_\b$, or $\cJ E_\a \perp E_\b$, that is, $\pi_\b \cJ Z= 0$, for all $Z \in E_\a$, so
$c_{\a\b}=0$. As the equation $c_{\a\b}=0$ contradicts both $2 c_{\a\b} \ge d_\b$ and $c_{\a\b}=\nu$ (as $\nu >0$), we
must have $c_{\a\b} + c_{\b\b} \ge d_\b$. Then $2 \nu = \sum_{\a\b} c_{\a\b} \ge d_1+d_2 = n$.

This proves the lemma in all the cases when $2 \nu < n$, that is,
in all the cases except for $n=8, \; \nu \ge 4$ (as it follows from Lemma~\ref{l:nvsnu}).

Consider the case $n=8$. We identify $\Ro$ with $\Oc$ and assume that the $J_i$'s act as in \eqref{eq:JuX8}. Let
$D: \Oc \to \Oc$ be the symmetric operator defined by $D1=0, \; De_i = \eta_i e_i$. By \eqref{eq:8sq},
condition \eqref{eq:codazzi} still holds if we replace $D$ by $D + c \, \mathrm{Im}$, where $\mathrm{Im}$ is the
operator of taking the imaginary part of an octonion. So we can assume that the eigenvalue of the maximal multiplicity
of $D_{|\Oc'}$ is zero (one of them, if there are more than one). Then in \eqref{eq:codazzi}, $\nu = \rk \, D$.
By construction, $\nu \le 6$, and we only need to consider the cases when $\nu \ge 4$, as it is shown above.

By \eqref{eq:JuX8}, $\< J_iX, Y \> J_iZ= \<Xe_i, Y \> Ze_i=\<e_i, X^*Y \> Ze_i$,
so $\sum_{i=1}^\nu \eta_i \< J_iX, Y \> J_iZ=\sum_{i=1}^\nu \eta_i \<e_i, X^*Y \> Ze_i$ $=
\sum_{i=1}^7 \<De_i, X^*Y \> Ze_i=ZD(X^*Y)$, as $D$ is symmetric and $D1=0$. Then \eqref{eq:codazzi} can be rewritten
as
\begin{equation}\label{eq:codazzi8}
2ZD(X^*Y)+XD(Z^*Y)-YD(Z^*X)= 0, \qquad \text{for all } X,Y \in E_\b, \; Z \in E_\a, \; \a \ne \b.
\end{equation}
Taking the inner product of \eqref{eq:codazzi8} with $X$ (and using the fact that $D$ is symmetric, $D1=0$ and
$Y^*X=2\<X,Y\>1-X^*Y$) we obtain $\<D(X^*Y),X^*Z\>=0$. It follows that for every $X \in E_\b$, the subspaces $E_1$ and
$E_2$ are invariant subspaces of the symmetric operator $L_XDL_X^t$, where $L_X:\Oc \to \Oc$ is the left
multiplication by $X$ (note that $L_{X^*}=L_X^t$ and that $L_XDL_X^t$ coincides with the operator $\hat R_X$
introduced above). So $L_XDL_X^t$ commutes with the both orthogonal projections $\pi_\a: \Ro \to E_\a, \; \a=1,2$.
It follows that for every $\a, \b$ (not necessarily distinct) and every $X \in E_\b$, the operator $D$ commutes
with $L_X^t\pi_\a L_X=\|X\|^2 \pi_{X^*E_\a}$, that is,
\begin{equation}\label{eq:[Dpi]}
\text{the space $X^*E_\a$ is an invariant subspace of $D$, for all $\a, \b$, and all $X \in E_\b$}.
\end{equation}
Consider all the possible cases for the dimensions $d_\a$ of the subspaces $E_\a$.

Let $(d_1, d_2)=(1,7)$, and let $u$ be a nonzero vector in $E_1$. Then by \eqref{eq:[Dpi]}, every line spanned by
$X^*u, \; X \perp u$ (that is, every line in $\Oc'$) is an invariant subspace of $D$. It follows that
$D_{|\Oc'}$ is a multiple of the identity, which is a contradiction, as $\rk \, D = \nu, \; 4 \le \nu \le 6$.

Let $(d_1, d_2)=(2,6)$, and let $E_1=\Span(u, ue), \; e \in \Oc', \; \|e\|=\|u\|=1$. Then $E_2=uL$, where
$L=\Span(1, e)^\perp$. By \eqref{eq:[Dpi]} with $E_\a=E_1$ and $X=uU^*=-uU \in E_2, \; U \in L$,
every two-plane $\Span(U, (Uu^*)(ue)), \; U \in L$, is an invariant
subspace of $D$. Note that $(Uu^*)(ue) \in L$, for all $U \in L$, and moreover, the operator $J$ defined by
$JU=(Uu^*)(ue)$ is an almost Hermitian structure on $L$. Then $L$ is an invariant subspace of $D$ (as the sum of the
invariant subspaces $\Span(U, JU), \; U \in L$) and $JD_{|L}U \in \Span(U, JU)$, for all $U \in L$ (as $\Span(U, JU)$
is both $J$- and $D_{|L}$-invariant). From assertion~1 of Lemma~\ref{l:level} it follows that the operator $JD_{|L}$
is a linear combination of $\id_{|L}$ and $J$. As $D$ is symmetric and its eigenvalue of the maximal multiplicity is
zero, $D_{|L}=0$. Then $\nu = \rk \, D \le 1$, which is a contradiction.

For the cases $(d_1, d_2)=(3,5), \, (4,4)$, we use the notion of the Cayley plane. A four-dimensional subspace
$\cp \subset \Oc$ is called a \emph{Cayley plane}, if for orthonormal octonions $X, Y, Z \in \cp$,
$X(Y^*Z) \in \cp$. This definition coincides with \cite[Definition~IV.1.23]{HL}, if we disregard the
orientation. We will need the following properties of the Cayley plane (they can be found in \cite[Section~IV]{HL}
or proved directly):
\begin{enumerate}[(i)]
  \item \label{it:cp1}
  A Cayley plane is well-defined; moreover, if $X(Y^*Z) \in \cp$ for some triple $X, Y, Z$ of orthonormal octonions in
  $\cp$, then the same is true for any triple $X, Y, Z \in \cp$ (possibly, non-orthonormal).
  \item \label{it:cp2}
  If $\cp$ is a Cayley plane, then the subspace $X^*\cp$ is the same for all nonzero $X \in \cp$; we call this
  subspace $\cp^*\cp$.
  \item \label{it:cp3}
  If $\cp$ is a Cayley plane, then $\cp^\perp$ is also a Cayley plane and $\cp^{\perp*}\cp^\perp=\cp^*\cp$.
  Moreover, for all nonzero $X \in \cp^\perp$, the subspace $X^*\cp$ is the same and is equal to $(\cp^*\cp)^\perp$.
  \item
  For every nonzero $e\in \Oc$ and every pair of orthonormal imaginary octonions $u, v$, the subspace
  $\cp=\Span(e,eu,ev,(eu)v)$ is a Cayley plane; every Cayley plane can be obtained in this way.
%  ($e, u$ and $v$ are not uniquely defined by $\cp$).
\end{enumerate}

Let $(d_1, d_2)=(3,5)$. Then $E_1$ is contained in a Cayley plane $\cp$ (spanned by $E_1$ and $X(Y^*Z)$, for some
orthonormal vectors $X, Y, Z \in E_1$), so $\cp^\perp \subset E_2$. Let $U$ be a unit vector in the orthogonal
complement to $\cp^\perp$ in $E_2$. Then for every nonzero $X \in \cp^\perp$, $X^*E_2 = \cp^*\cp \oplus \br(X^*U)$, by
properties (\ref{it:cp2}, \ref{it:cp3}). As for any two invariant subspaces of a symmetric operator, their
intersection and the orthogonal complements to it in each of them are also invariant, it follows from \eqref{eq:[Dpi]}
that both $\cp^*\cp$ and every line $\br(X^*U), \; X \in \cp^\perp$, are invariant subspaces of $D$. Then the
restriction of $D$ to the four-dimensional space $(\cp^\perp)^*U$ is a multiple of the identity on that space. As the
eigenvalue of the maximal multiplicity of $D$ is zero, $\br 1 \oplus (\cp^\perp)^*U \subset \Ker D$. Then
$\nu = \rk \, D \le 3$, which is again a contradiction.

Let now $d_1=d_2=4$. First assume that $E_1$ is not a Cayley plane. Let $X_1, X_2$ be orthonormal vectors
in $E_1$. Then $X^*_1E_1 \cap X^*_2E_1 \supset \Span(1, X^*_1X_2)$, as $X^*_2X_1=-X^*_1X_2$. Moreover, for any unit
vector $Y \in X^*_1E_1 \cap X^*_2E_1$ orthogonal to $\Span(1, X^*_1X_2)$ we have $Y=X^*_1X_3=X^*_2X_4$ for some
$X_3, X_4 \in E_1$, $X_3, X_4 \perp X_1, X_2$, which implies $X_2(X^*_1X_3)=X_4 \in E_1$, so $E_1$ is a Cayley plane
by property \eqref{it:cp1}. It follows that $X^*_1E_1 \cap X^*_2E_1 = \Span(1, X^*_1X_2)$. As by \eqref{eq:[Dpi]} both
subspaces on the left-hand side are invariant under $D$ and as $\br 1$ is an invariant subspace of $D$, we obtain that
every line $\br(X^*_1X_2), \; X_1, X_2 \in E_1$ is an invariant subspace of $D$ (that is, $X^*_1X_2$ is an eigenvector
of $D$). Then the space $L=\Span(E_1^*E_1)$ lies in an eigenspace of $D$, so $D_{|L}$ is a multiple of $\id_{|L}$. If
$X_1,X_2,X_3 \in E_1$ are orthonormal, then $X_2^*X_3 \notin X_1^*E_1$, as $E_1$ is not a Cayley plane. So
$\dim L \ge 5$. As the eigenvalue of the maximal multiplicity of $D$ is zero, $\nu = \rk \, D \le 3$, a contradiction.

Let again $d_1=d_2=4$, and let $E_1$ be a Cayley plane. Then $E_2=(E_1)^\perp$ is also a Cayley plane by
property \eqref{it:cp3}. Moreover, by the same property, $E_1^*E_1=E_2^*E_2=V_1$ and $E_1^*E_2=E_1^*E_2=V_2$, where
$V_1, V_2$ are mutually orthogonal four-dimensional subspaces of $\Oc$, and $1 \in V_1$. From \eqref{eq:[Dpi]},
each of the two spaces $V_1, V_2$ is invariant under $D$. Let $X, Y \in E_1, \; Z, W \in E_2$, with $X, Z \ne 0$, and
let $u=X^{-1}Y, \; v=Z^{-1}W$. As $X^{-1}=\|X\|^{-2}X^*$, $L_{X^{-1}}E_1=V_1$ by property \eqref{it:cp2}.
Similarly, $L_{Z^{-1}}E_2=V_1$. Taking the inner product of \eqref{eq:codazzi8}
with $W$ we obtain that for all $X \in E_1, \; Z \in E_2, \; u,v \in V_1$,
\begin{equation*}
2\|Z\|^2\|X\|^2 \<Du,v\>-\<D(Z^*(Xu)),Z^*(Xv)\>=-\<D(Z^*X),Z^*((Xu)v)\>.
\end{equation*}
The left-hand side is symmetric in $u, v$. As $(Xu)v=-(Xv)u$, for any $u \perp v, \; u, v \perp 1$, we
obtain $\<D(Z^*X),Z^*((Xu)v)\>=0$ for all $u, v \in V_1, \; u \perp v, \; u, v \perp 1$, and all
$X \in E_1, \; Z \in E_2$. Given any nonzero orthogonal $X, X' \in  E_1$, we can find
$u, v \in V_1, \; u \perp v, \; u, v \perp 1$, such that $X'=(Xu)v$. To see that note that for every
$u \in V_1=E_1^*E_1, \; Xu \in E_1$ by property \eqref{it:cp1}. As $L_X$ is nonsingular, $L_X(V_1 \cap 1^\perp)$ is a
three-dimensional subspace of $E_1$. The same is true with $X$ replaced by $X'$. Therefore, for some
$u, v \in V_1 \cap 1^\perp, \; Xu=X'v$, hence $X'=-\|v\|^{-2}(Xu)v$. As $X' \perp X$, we get $\<X,(Xu)v\>=0$, so
$u \perp v$. Thus $\<D(Z^*X),Z^*X'\>=0$, for any $Z \in E_2$ and any orthogonal $X, X' \in E_1$. As
$Z^*E_1 = V_2$, for any nonzero $Z \in E_2$, by properties \ref{it:cp2}, \ref{it:cp2}), and the operator
$L_{Z^*}$ is orthogonal when $\|Z\|=1$ we get $\<Dv_1,v_2\>=0$, for any two orthogonal vectors $v_1, v_2 \in V_2$.
It follows that the restriction of $D$ to its invariant subspace $V_2$ is a multiple of the identity. As
$V_2 \subset \Oc'$ and the eigenvalue of $D_{|\Oc'}$ of the maximal multiplicity is zero we obtain that
$\br 1 \oplus V_2 \subset \Ker D$. Then $\nu = \rk \, D \le 3$ which is a contradiction.
\end{proof}
}
\begin{remark}
As it follows from the proof of Lemma~\ref{l:codazzi}, the algebraic statement ``a symmetric operator satisfying
\eqref{eq:codazzi} is a multiple of the identity" is valid when $2\nu < n$. In particular, when $n=16$, it remains
true, if we relax the restrictions $\nu \le 4$ of Theorem~\ref{t:cocl} to $\nu \ne 8$ (as for $n=16, \; \nu \le 8$
by \eqref{eq:radon}).
\end{remark}

Lemma~\ref{l:codazzi} implies Theorem~\ref{t:cocl} at the generic points. Indeed, by Lemma~\ref{l:codazzi}, every
$x \in M'$ has a neighbourhood $\mU$ which is either conformally flat or is conformally equivalent to a Riemannian
manifold whose curvature tensor has the form \eqref{eq:confcs}, with $\rho$ being a multiple of the identity, that is,
whose curvature tensor has a Clifford structure. It follows from \cite[Theorem~1.2]{Nhjm}, \cite[Proposition~2]{Nmm}
that $\mU$ is conformally equivalent to an open subset of one of the five model spaces: the rank-one symmetric spaces
$\bc P^{n/2}, \; \bc H^{n/2}, \; \mathbb{H}P^{n/4}, \; \mathbb{H}H^{n/4}$, or the Euclidean space.

To prove Theorem~\ref{t:cocl} in full, we show that, firstly, the same is true for any $x \in M^n$,
and secondly, that the model space to a domain of which $\mathcal{U}$ is conformally equivalent is the same,
for all $x \in M^n$.

We normalize the standard metric $\tilde g$ on each of the spaces
$\bc P^{n/2}, \; \bc H^{n/2}, \; \mathbb{H}P^{n/4}, \; \mathbb{H}H^{n/4}$ in such a way that the sectional curvature
$K_\sigma$ satisfies $|K_\sigma| \in [1,4]$. Then the curvature tensor of each of them has a Clifford structure
$\Cliff(\nu; J_1, \dots, J_\nu; \ve, \ve, \dots, \ve), \; (\nu+1 \; \ve$'s), where $\nu=1,3, \; \ve=\pm 1$ and the
$J_i$'s are smooth anticommuting almost Hermitian structures, with $J_1J_2= \pm J_3$ when $\nu=3$ and with
$\tilde \n_Z J_i = \sum_{j=1}^m \omega_{i}^{j}(Z) J_j$, where $\omega_i^j$ are smooth $1$-forms with
$\omega_i^j+\omega_j^i=0$, and $\tilde \n$ is the Levi-Civita connection for $\tilde g$. Denote the corresponding
spaces by $M_{\nu,\ve}$ (and their Weyl tensors, by $W_{\nu,\ve}$), so that
\begin{equation*}
M_{1,1}=(\bc P^{n/2},\tilde g), \quad M_{1,-1}=(\bc H^{n/2},\tilde g), \quad
M_{3,1}=(\mathbb{H}P^{n/4},\tilde g), \quad M_{3,-1}=(\mathbb{H}H^{n/4},\tilde g).
\end{equation*}

We start with the following technical lemma:
\begin{lemma}\label{l:mmeps}
Let $(N^n,\<\cdot,\cdot\>)$ be a smooth Riemannian space locally conformally equivalent to one of the $M_{\nu,\ve}$,
so that $\tilde g =f\<\cdot,\cdot\>$, for a positive smooth function $f=e^{2\phi}: N^n \to \br$.
Then the curvature tensor $R$ and the Weyl tensor $W$ of $(N^n,\<\cdot,\cdot\>)$ satisfy
\begin{subequations}\label{eq:weylconfmodel}
\begin{align}
    R(X,Y)&=(X \wedge KY + KX \wedge Y) + \ve f (X \wedge Y + T(X,Y)), \quad \text{where}\label{eq:model1} \\
    \notag
    T(X,Y)&=\sum\nolimits_{i=1}^\nu (J_iX \wedge J_i Y +2 \< J_iX, Y \> J_i), \quad
    K=H(\phi)-\n\phi \otimes \n\phi + \tfrac12 \|\nabla \phi\|^2 \id,\\
    W(X,Y)&=W_{\nu,\ve}(X,Y)=\ve f (-\tfrac{3\nu}{n-1} X \wedge Y + T(X,Y)), \label{eq:model2} \\
    \|W\|^2&= C_{\nu n} f^2, \quad C_{\nu n}=6 \nu n (n+2)(n-\nu-1) (n-1)^{-1}, \label{eq:model3} \\
    (\n_Z W)(X,Y)&=\ve Zf (-\tfrac{3\nu}{n-1} X \wedge Y + T(X,Y))  \label{eq:model4} \\
    \notag
    &+ \tfrac12 \ve([T(X,Y),\n f \wedge Z] + T((\n f \wedge Z)X,Y)+ T(X,(\n f \wedge Z)Y)),
\end{align}
\end{subequations}
where $X \wedge Y$ is the linear operator defined by $(X \wedge Y)Z=\<X,Z\>Y-\<Y,Z\>X$, $H(\phi)$ is the symmetric
operator associated to the Hessian of $\phi$, and both $\n$ and the norm are
computed with respect to $\<\cdot,\cdot\>$.
\end{lemma}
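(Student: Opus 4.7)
The plan is to derive everything from two inputs: the Clifford structure of $M_{\nu,\varepsilon}$ and the standard formula for how curvature transforms under a conformal change of metric. Write $\tilde g = f \<\cdot,\cdot\>$ with $f = e^{2\phi}$. Since $M_{\nu,\varepsilon}$ carries a $\Cliff(\nu; J_1,\dots,J_\nu;\varepsilon,\dots,\varepsilon)$-structure in the metric $\tilde g$, formula \eqref{eq:cs} immediately gives
\[
\tilde R(X,Y) \;=\; \varepsilon \bigl( X \tilde\wedge Y + \tilde T(X,Y) \bigr),
\]
where $\tilde\wedge$ and $\tilde T$ are defined with $\tilde g$. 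Because $\tilde g(V,W) = f\<V,W\>$ and the $J_i$ are orthogonal for both metrics (hence skew for both), one checks $X \tilde\wedge Y = f\,(X\wedge Y)$ and $\tilde T = f\,T$, so $\tilde R(X,Y) = \varepsilon f(X\wedge Y + T(X,Y))$ purely in $\<\cdot,\cdot\>$-terms.

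For \eqref{eq:model1} I would apply the standard formula relating the Levi-Civita connections of two conformally related metrics: $\tilde \n_X Y = \n_X Y + d\phi(X) Y + d\phi(Y) X - \<X,Y\>\n\phi$. A direct computation of $\tilde R - R$ via this formula (in the paper's curvature convention) produces
\[
R(X,Y) \;=\; \tilde R(X,Y) + X \wedge KY + KX \wedge Y,
\qquad K = H(\phi) - \n\phi\otimes\n\phi + \tfrac12\|\n\phi\|^2\,\id,
\]
which combined with Step 1 gives \eqref{eq:model1}. For \eqref{eq:model2}, I would use that the Weyl tensor as a $(1,3)$-tensor is conformally invariant: $W = \tilde W$, and compute $\tilde W$ from $\tilde R$ using that $M_{\nu,\varepsilon}$ is Einstein (trace of the Jacobi operator $(n-1-\nu)\varepsilon + \nu\cdot 4\varepsilon = (n-1+3\nu)\varepsilon$). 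The usual Weyl formula then collapses the $X\wedge Y$ coefficient from $\varepsilon$ to $-\varepsilon\,\tfrac{3\nu}{n-1}$, giving \eqref{eq:model2}. For \eqref{eq:model3}, $W = f\,V$ where $V$ is the fixed algebraic tensor $\varepsilon(-\tfrac{3\nu}{n-1}X\wedge Y + T(X,Y))$; a direct orthonormal-basis computation of $\|V\|^2$ (using $\Tr(J_i J_j)=-n\delta_{ij}$ and orthogonality of the $J_i X$) yields the constant $C_{\nu n}=6\nu n(n+2)(n-\nu-1)/(n-1)$.

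The covariant-derivative formula \eqref{eq:model4} requires more work. Writing $W = f V$ and applying Leibniz, the term $Zf\cdot V$ appears immediately, leaving $(\n_Z V)(X,Y)$ to analyse. This reduces to computing $\n_Z T(X,Y)$, which in turn requires $\n_Z J_i$. From the formula relating $\n$ and $\tilde\n$, one gets the key identity
\[
(\n_Z J_i) - (\tilde\n_Z J_i) \;=\; [J_i, \n\phi \wedge Z] \;=\; \bigl[J_i,\, \tfrac{1}{2f}\n f \wedge Z\bigr],
\]
(valid because $d\phi(J_iY)Z - \<Z,J_iY\>\n\phi + \dots$ assemble into a commutator with $\n\phi\wedge Z$). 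Because $\tilde\n_Z J_i = \sum_j \omega_i^j(Z) J_j$ with $\omega_i^j + \omega_j^i = 0$, substituting $\tilde\n J_i$ into the four terms of $\n_Z T(X,Y)$ produces a sum symmetric in $(i,j)$, which is annihilated by the antisymmetric $\omega_i^j$; hence only the commutator part contributes. Using the identity $[L, A\wedge B] = LA\wedge B + A\wedge LB$ for skew $L$, the surviving terms reorganize exactly into $\tfrac12\varepsilon([T(X,Y),\n f\wedge Z] + T((\n f\wedge Z)X, Y) + T(X, (\n f\wedge Z)Y))$, with the factor $\tfrac12$ absorbing $\tfrac{1}{2f}\cdot f$.

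The main obstacle is the last step: bookkeeping the various $\n_Z J_i$ contributions inside $\n_Z T(X,Y)$ and verifying that after expanding $[J_i, \n\phi\wedge Z]$ in each of the four summands of $T$, the result regroups cleanly as a commutator of $T(X,Y)$ with $\n f\wedge Z$ plus the two $T$-slot terms. Once the cancellation of the $\omega_i^j$-terms is noted, this is purely algebraic manipulation using the skew-adjointness of the $J_i$ and of $\n f\wedge Z$.
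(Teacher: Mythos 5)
Your proposal is correct and follows essentially the same route as the paper: compute $\tilde R$ from the Clifford form, convert $\tilde\wedge$ and $\tilde T$ to $f\wedge$ and $fT$, apply the standard conformal transformation of the curvature tensor for \eqref{eq:model1}, use conformal invariance of $W$ and the Einstein property for \eqref{eq:model2}, compute the norm directly for \eqref{eq:model3}, and derive $\n_Z J_i = \tilde\n_Z J_i + [J_i,\n\phi\wedge Z]$ so that the $\omega_i^j$-terms cancel and the commutator regroups into the stated form for \eqref{eq:model4}. Your explicit observation that the $\omega_i^j$-contribution to $\n_Z T$ vanishes by the antisymmetry of $\omega$ against the symmetry in $(i,j)$ is a detail the paper suppresses; the only small imprecision is calling $\ve(-\tfrac{3\nu}{n-1}X\wedge Y + T(X,Y))$ a ``fixed'' tensor, since the $J_i$'s are not parallel — but this does not affect the norm computation, which is pointwise.
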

\begin{proof} The curvature tensor of $M_{\nu,\ve}$ has the form
$\tilde R(X,Y)=\ve(X \tilde \wedge Y + \sum\nolimits_{i=1}^{\nu}(J_iX \tilde \wedge J_i Y +2 \tilde g(J_iX, Y)J_i))$,
where $(X \tilde\wedge Y)Z=\tilde g(X,Z)Y-\tilde g(Y,Z)X$. Under the conformal change of metric
$\tilde g =f\<\cdot,\cdot\>=e^{2\phi}\<\cdot,\cdot\>$, the curvature tensor transforms as
$\tilde R(X,Y)=R(X,Y)-(X \wedge KY + KX \wedge Y)$. As $\tilde g(X,Y) =f\<X,Y\>$, $X \tilde\wedge Y=f (X \wedge Y)$
and the $J_i$'s remain anticommuting almost Hermitian structures for $\<\cdot,\cdot\>$, equation
\eqref{eq:model1} follows.

The fact that the Weyl tensor has the form \eqref{eq:model2} follows from the definition; the norm of $W$ can be
computed directly using the fact that the $J_i$'s are orthogonal and that $J_1J_2= \pm J_3$ when $\nu=3$.

From $\tilde \n_Z J_i = \sum_{j=1}^{\nu} \omega_{i}^{j}(Z) J_j$ and
$\tilde \n_Z X = \n_Z X +Z\phi \, X + X\phi \, Z -\<X,Z\>\phi$, where  $\tilde \n$ is the Levi-Civita connection
for $\tilde g$, we get $\n_Z J_i = \sum_{j=1}^{\nu} \omega_{i}^{j}(Z) J_j + [J_i, \n \phi \wedge Z]$ (where we used
the fact that $[J_i, X \wedge Y]=J_iX \wedge Y + X \wedge J_iY$). Then
\begin{equation*}
    (\n_Z T)(X,Y)=[T(X,Y),\n \phi \wedge Z] + T((\n \phi \wedge Z)X,Y)+ T(X,(\n \phi \wedge Z)Y),
\end{equation*}
which, together with \eqref{eq:model2}, proves \eqref{eq:model4}.
\end{proof}

For every point $x \in M'$, there exists a neighbourhood $\mathcal{U}$ of $x$ and a positive smooth function
$f: \mathcal{U} \to \br$ such that the Riemannian space $(\mathcal{U}(x), f \<\cdot,\cdot\>)$ is isometric to an open
subset of one of the five model spaces ($M_{\nu,\ve}$ or $\Rn$), so at every point $x \in M'$, the Weyl tensor $W$ of
$M^n$ either vanishes, or has the form given in \eqref{eq:model2}. The Jacobi operators associated to the
different Weyl tensors $W_{\nu,\ve}$ in \eqref{eq:model2} differ by the multiplicities and
the signs of the eigenvalues, so every point $x \in M'$ has a neighbourhood conformally equivalent to a domain of
exactly one of the model spaces. Moreover, the function $f>0$ is well-defined at all the points where $W \ne 0$,
as $\|W\|^2 = C_{\nu n} f^2$ by \eqref{eq:model3}.

% Jacobi has 2 eigenvalues: see paper copy, or i tak ponyatno?
By continuity, the Weyl tensor $W$ of $M^n$ either has the form $W_{\nu,\ve}$ or vanishes, at every point $x \in M^n$
(as $M'$ is open and dense in $M^n$, see Lemma~\ref{l:locc1}). Moreover,
every point $x \in M^n$, at which the Weyl tensor has the form $W_{\nu,\ve}$, has a neighbourhood, at
which the Weyl tensor has the same form. Hence $M^n=M_0 \cup \bigcup_\a M_\a$, where $M_0=\{x \, : \, W(x)=0\}$
is closed, and every $M_\a$ is a nonempty open connected subset, with $\partial M_\a \subset M_0$, such that the
Weyl tensor has the same form $W_{\nu,\ve}=W_{\nu(\a),\ve(\a)}$ at every point $x \in M_\a$. In particular,
$M_\a \subset M'$, for
every $\a$, so that each $M_\a$ is locally conformally equivalent to one of the model spaces $M_{\nu,\ve}$.

If $M = M_0$ or if $M_0=\varnothing$, the theorem is proved.
Otherwise, suppose that $M_0 \ne \varnothing$ and that there exists at least one component $M_\a$.
Let $y \in \partial M_\a \subset M_0$ and let $B_\delta(y)$ be a small geodesic ball of $M$ centered at $y$ which is
strictly geodesically convex (any two points from $B(y)$ can be connected by a unique geodesic segment lying in
$B_\delta(y)$ and that segment realizes the distance between them). %ref Klingenberg?
Let $x \in B_{\delta/3}(y) \cap M_\a$ and
let $r = \mathrm{dist}(x, M_0)$. Then the geodesic ball $B=B_r(x)$ lies in $M_\a$ and is strictly convex. Moreover,
$\partial B$ contains a point $x_0 \in M_0$. Replacing $x$ by the midpoint of the segment $[xx_0]$ and $r$ by $r/2$,
if necessary, we can assume that all the points of $\partial B$, except for $x_0$, lie in $M_\a$.

The function $f$ is positive and smooth on $\overline{B} \setminus \{x_0\}$
(that is, on an open subset containing $\overline{B} \setminus \{x_0\}$, but not containing $x_0$). We are
interested in the behavior of $f(x)$, when $x \in B$ approaches $x_0$.
{
\begin{lemma}\label{l:x_0}
When $x \to x_0, \; x \in B$, both $f$ and $\nabla f$ have a finite limit. Moreover,
$\lim_{x \to x_0, x \in B}f(x)=0$.
\end{lemma}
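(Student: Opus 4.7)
My plan is to establish the lemma in three steps: the limit $\lim f = 0$, the uniform boundedness of $\nabla f$, and the existence of the limit of $\nabla f$.

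\emph{Step 1: Limit of $f$.} By equation~\eqref{eq:model3} from Lemma~\ref{l:mmeps}, $\|W\|^2 = C_{\nu n} f^2$ on $M_\a$ with $C_{\nu n} > 0$; since $W$ is smooth on $M^n$ with $W(x_0) = 0$, continuity gives $\|W(x)\|^2 \to 0$, hence $f(x) = \|W(x)\|/\sqrt{C_{\nu n}} \to 0$ as $x \to x_0$.

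\emph{Step 2: Boundedness of $\nabla f$.} Differentiating $\|W\|^2 = C_{\nu n} f^2$ along a vector $Z$ gives $\langle \nabla_Z W, W\rangle = C_{\nu n} f\,Zf$. Cauchy--Schwarz yields $|Zf| \le \|\nabla_Z W\|/\sqrt{C_{\nu n}}$, and summing over an orthonormal basis produces $\|\nabla f\|^2 \le \|\nabla W\|^2/C_{\nu n}$ on $M_\a$. Since $\nabla W$ is smooth on $M^n$ and hence bounded on the compact set $\overline{B}$, $\|\nabla f\|$ is uniformly bounded on $B$.

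\emph{Step 3: Existence of the limit of $\nabla f$.} To pass from boundedness to existence of a limit, I use \eqref{eq:model4}. The algebraic cancellation $[X\wedge Y,\nabla f\wedge Z] + (\nabla f\wedge Z)X\wedge Y + X\wedge(\nabla f\wedge Z)Y = 0$ eliminates the $-\tfrac{3\nu}{n-1}(\id\wedge\id)$ contribution of $T$, and multiplication of \eqref{eq:model4} by $f$ then produces the smooth tensor identity
\begin{equation*}
f(\nabla_Z W)(X,Y) = Zf\cdot W(X,Y) + \tfrac12\bigl([W(X,Y),\nabla f\wedge Z] + W((\nabla f\wedge Z)X,Y) + W(X,(\nabla f\wedge Z)Y)\bigr)
\end{equation*}
on $M_\a$. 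Given any sequence $x_k \to x_0$ in $B$, the boundedness of $\nabla f$ together with the constancy $\|E\|^2 = C_{\nu n}$ (where $E = \ve W/f$) permits extraction of a subsequence along which $\nabla f(x_k) \to p$ and $E(x_k)\to E_0$. Passing to the limit in \eqref{eq:model4}, the smooth tensor $\nabla W(x_0)$ is identified with a specific linear combination of $p$ and $E_0$; because $E_0$ inherits the rigid Clifford--Osserman algebraic structure of $E$, this system pins $p$ uniquely, independently of the subsequence. Hence $\lim_{x \to x_0}\nabla f(x)$ exists; because $f \ge 0$ attains its minimum $f(x_0) = 0$ at $x_0$, this limit must equal $0$.

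\emph{Main obstacle.} The hard part is the uniqueness of $p$ in Step 3: generically the gradient of the square root of a smooth nonnegative function oscillates in direction near a zero of the function, and merely bounding $\|\nabla f\|$ is insufficient. What rules this out here is the rigid form $W = \ve f E$ with $\|E\|^2 = C_{\nu n}$ constant dictated by Lemma~\ref{l:mmeps}, which through the structural equation \eqref{eq:model4} forces the subsequential limit $p$ to be determined by the smooth datum $\nabla W(x_0)$.
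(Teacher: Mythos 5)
Your Steps 1 and 2 are correct. Step 1 is exactly the paper's argument. Step 2 (the Cauchy--Schwarz bound $\|\nabla f\| \le \|\nabla W\|/\sqrt{C_{\nu n}}$ obtained by differentiating $\|W\|^2 = C_{\nu n}f^2$) is a clean elementary estimate which the paper does not state, but it only gives boundedness of $\nabla f$, not convergence.

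The genuine gap is in Step 3, in the sentence ``because $E_0$ inherits the rigid Clifford--Osserman algebraic structure of $E$, this system pins $p$ uniquely, independently of the subsequence.'' Two things are being asserted here, and neither is justified. First, the subsequential limit $E_0$ of $E = \ve W/f$ may a priori depend on the chosen subsequence: $\|E\|^2 = C_{\nu n}$ only places $E$ on a compact orbit, and nothing you have said prevents $E(x_k)$ from oscillating between different points of that orbit as $x_k \to x_0$. The limit relation $\nabla_Z W(x_0) = \ve\langle p, Z\rangle E_0 + \tfrac12\ve\bigl([E_0(X,Y), p\wedge Z] + E_0((p\wedge Z)X,Y) + E_0(X,(p\wedge Z)Y)\bigr)$ is an equation in the pair $(p, E_0)$, and distinct pairs could in principle solve it. Second, even with $E_0$ fixed, the injectivity of this relation in $p$ is stated, not proved. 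The paper's proof addresses both points head-on: it constructs a smooth isometric immersion $\iota:(B, f\langle\cdot,\cdot\rangle)\to M_{\nu,\ve}$, observes that $\lim f = 0$ makes sequences converging to $x_0$ Cauchy in the conformal metric, so $\iota$ (and hence the pulled-back Clifford structure $\cJ$, equivalently $E$) extends continuously to $x_0$ --- yielding an actual limit, not a subsequential one; it then computes the explicit contraction $\theta(Y,Z) = \langle\sum_j(\nabla_{E_j}W)(E_j,Y)Y,Z\rangle = \tfrac{-3\ve\nu(n-3)}{2(n-1)}\langle\nabla f,Z\rangle$ for a unit $Y\perp Z,\cJ Z$, which recovers $\nabla f$ from the continuous data $\nabla W$ and $\cJ$. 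Your Step 3 would need an argument replacing both of these.

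A secondary error: the final sentence asserts $\lim_{x\to x_0,\, x\in B}\nabla f(x) = 0$, which is neither claimed by the lemma nor correct. The point $x_0$ lies on $\partial B$, so $f$ attaining its minimum value $0$ there does not force the gradient to vanish (any more than $f(t)=t$ on $[0,1]$ has $f'(0)=0$); the boundary first-order condition only constrains the sign of inward directional derivatives. Indeed the subsequent argument in the paper needs the possibility $\lim\nabla f \ne 0$: the contradiction for $n>6$ comes from $\nabla u = \tfrac{n-2}{4}f^{(n-6)/4}\nabla f \to 0$ against the boundary point theorem, which uses only that $\nabla f$ is bounded and $f^{(n-6)/4}\to 0$, and the separate work at the end of the section establishing $\lim\nabla f = 0$ is undertaken precisely because it is \emph{not} automatic and is needed only for $n = 6$.
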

\begin{proof}
The fact that $\lim_{x \to x_0, x \in B}f(x)=0$ follows from \eqref{eq:model3} and the fact that $W_{|x_0}=0$
(as $x_0 \in M_0$).

As the Riemannian space $(B, f\<\cdot,\cdot\>)$ is locally isometric to a rank-one symmetric space $M_{\nu,\ve}$ and
is simply connected, there exists a smooth isometric immersion $\iota:(B, f\<\cdot,\cdot\>) \to M_{\nu,\ve}$. Since
$f$ is smooth on $\overline{B} \setminus \{x_0\}$ and $\lim_{x \to x_0, x \in B}f(x)=0$, the range of $\iota$ is
a bounded domain in $M_{\nu,\ve}$. Moreover, as $\lim_{x \to x_0, x \in B}f(x)=0$, every sequence of points in $B$
converging to $x_0$ in the metric $\<\cdot,\cdot\>$ is a Cauchy sequence for the metric $f\<\cdot,\cdot\>)$. It
follows that there exists a limit $\lim_{x \to x_0, x \in B} \iota(x) \in M_{\nu,\ve}$. Defining for every
$x \in B$ the point $\cJ_{|x}=\Span(J_1,\dots,J_\nu)$ in the Grassmanian $G(\nu, \bigwedge^2 T_xM^n)$, we find that
there exists a limit $\lim_{x \to x_0, x \in B} \cJ_{|x} =:\cJ_{|x_0} \in G(\nu, \bigwedge^2 T_{x_0}M^n)$. In
particular, if $Z$ is a continuous vector field on $\overline{B}$, then there exists a unit continuous vector field
$Y$ on $\overline{B}$ such that $Y \perp Z, \cJ Z$ on $B$. For such two vector fields, the function
$\theta(Y,Z)=\<\sum_{j=1}^n (\n_{E_j} W)(E_j,Y)Y,Z\>$ (where $E_j$ is an orthonormal frame on $\overline{B}$) is
well-defined and continuous on $\overline{B}$. Using \eqref{eq:model4} we obtain by a direct computation that at the
points of $B$, $\theta(Y,Z)=\frac{\ve (n-3)}{2(n-1)} \<(3\nu \n f \wedge Y-(n-1)T(\n f,Y))Y,Z\>
=\frac{-3\ve \nu(n-3)}{2(n-1)} \<\n f ,Z\>$ (where we used the fact that $\|Y\|=1$ and $Y \perp Z, \cJ Z$). As
$\theta(Y,Z)$ is continuous on $\overline{B}$, there exists a limit $\lim_{x \to x_0, x \in B} Zf$. Since $Z$ is an
arbitrary continuous vector field on $\overline{B}$, $\n f$ has a finite limit when $x \to x_0, \; x \in B$.
\end{proof}
}

% explain why g \to 0 is not enough; everywhere: operator... has a limit (say in normal coordinates)
% although the metric...tilde g tends to 0, it is not clear why this is a contradiction

As $\lim_{x \to x_0, x \in B}f(x)=0$ and the $J_i$'s are orthogonal, the second term on the right-hand
side of equation \eqref{eq:model1} tends to $0$ when $x \to x_0$ in $B$.
Therefore the (3,1) tensor field defined by $(X,Y) \to (X \wedge KY + KX \wedge Y)$ has a finite limit
(namely $R_{|x_0}$) when $x \to x_0$ in $B$. It follows that the symmetric operator $K$ has a finite limit at $x_0$.
Computing the trace of $K$ and using the fact that $\phi = \frac12 \ln f$ we get
\begin{equation}\label{eq:laplacian}
    \triangle u = F u , \quad \text{where} \; u=f^{(n-2)/4}, \; F =\tfrac12(n-2) \Tr K
\end{equation}
on $B$. Both functions $F$ and $u$ are smooth on $\overline{B} \setminus\{x_0\}$ and have a finite limit
at $x_0$. Moreover, $\lim_{x \to x_0, x \in B}u(x)=0$ by Lemma~\ref{l:x_0} and $u(x)>0$ for
$x \in \overline{B} \setminus\{x_0\}$. The domain $B$ is a small geodesic ball, so it satisfies
the inner sphere condition (the radii of curvature of the sphere $\partial B$ are uniformly bounded).
By the boundary point theorem \cite[Section~2.3]{F}, the inner directional derivative of $u$ at $x_0$
(which exists by Lemma~\ref{l:x_0}, if we define $u(x_0)=0$ by continuity) is positive.

As $\n u=\frac14 (n-2) f^{(n-6)/4} \n f$ in $B$, we arrive at a contradiction with Lemma~\ref{l:x_0} in all the cases,
except for $n=6$. To finish the proof in that case, we will show that the limit $\lim_{x \to x_0, x \in B} \n f(x)$,
which exists by Lemma~\ref{l:x_0}, is zero. When $n=6$, we have $\nu=1$ by \eqref{eq:radon}, so
$T(X,Y)=JX \wedge J Y +2 \< JX, Y \> J$, where $J=J(x)$ is smooth on $\overline{B} \setminus\{x_0\}$ and
has a limit when $x \to x_0, \; x \in B$ (see the proof of Lemma~\ref{l:x_0}). Using the covariant derivative of $T$
computed in Lemma~\ref{l:mmeps} and \eqref{eq:model4}, we obtain that on $B$,
\begin{equation*}
\begin{aligned}
    (&\n_U\n_Z W)(X,Y)=\ve \<H(f)U,Z\> (-\tfrac{3}{5} X \wedge Y + T(X,Y))  \\
    &+ \tfrac12 \ve([T(X,Y),H(f)U \wedge Z] + T((H(f)U \wedge Z)X,Y)+ T(X,(H(f)U \wedge Z)Y)) \\
    &+ \tfrac12 \ve f^{-1} Zf ([T(X,Y),\n f \wedge U] + T((\n f \wedge U)X,Y)+ T(X,(\n f \wedge U)Y))\\
    &+ \tfrac14 \ve f^{-1} [[T(X,Y),\n f \wedge U] + T((\n f \wedge U)X,Y)+ T(X,(\n f \wedge U)Y),\n f \wedge Z]\\
    &+ \tfrac14 \ve f^{-1} ([T((\n f \wedge Z)X,Y),\n f \wedge U] + T((\n f \wedge U)(\n f \wedge Z)X,Y)+
    T((\n f \wedge Z)X,(\n f \wedge U)Y))\\
    &+ \tfrac14 \ve f^{-1} ([T(X,(\n f \wedge Z)Y),\n f \wedge U] + T((\n f \wedge U)X,(\n f \wedge Z)Y)+
    T(X,(\n f \wedge U)(\n f \wedge Z)Y)),\\
\end{aligned}
\end{equation*}
where $H(f)$ is the symmetric operator associated to the Hessian of $f$. Taking $U=Z=E_j$, where $\{E_j\}$ is an
orthonormal basis, and summing up by $j$ we find after some computations:
\begin{multline*}
\sum\nolimits_{j=1}^6 (\n_{E_j}\n_{E_j} W)(X,Y)=\ve \triangle f (-\tfrac{3}{5} X \wedge Y + T(X,Y)) -
\ve f^{-1} \|\n f\|^2 T(X,Y)\\
+ \ve f^{-1} (T(X,Y)\n f \wedge \n f + T((X \wedge Y)\n f,\n f)) % \times \tfrac14(n-2)
+ \tfrac32 \ve f^{-1} (\n f \wedge (X \wedge Y)\n f  % \times m
+ J\n f \wedge (X \wedge Y)J\n f). %\sum_\alpha
\end{multline*}
As both $\n f$ and $J$ are smooth on $\overline{B} \setminus\{x_0\}$ and have limits when
$x \to x_0, \; x \in B$, there exist unit vector fields $X, Y$, continuous on $\overline{B}$ and satisfying
$\cI X, \cI Y \perp \n f, \; \cI X \perp \cI Y$. For such $X$ and $Y$,
$$
\sum\nolimits_{j=1}^6 (\n_{E_j}\n_{E_j} W)(X,Y)=\ve \triangle f (-\tfrac{3}{5} X \wedge Y + JX \wedge JY) -
\ve f^{-1} \|\n f\|^2 JX \wedge JY.
$$
As the left-hand side is continuous on $\overline{B}$ and $\lim_{x \to x_0, x \in B} \triangle f = 0$ by
\eqref{eq:laplacian} and Lemma~\ref{l:x_0}, we obtain that the field $f^{-1} \|\n f\|^2 JX \wedge JY$ of
skew-symmetric operators has a limit at $x_0$. Taking the trace of its square we find that there exists a limit
$\lim_{x \to x_0, x \in B} f^{-2} \|\n f\|^4$ which implies
$\lim_{x \to x_0, x \in B} \n f=0$ by Lemma~\ref{l:x_0}. We again arrive at a contradiction with the
boundary point theorem for the function $u=f$ satisfying \eqref{eq:laplacian}.
\end{proof}

\end{document}